\providecommand{\U}[1]{\protect\rule{.1in}{.1in}}
\newtheorem{theorem}{Theorem}[section]
\newtheorem{proposition}[theorem]{Proposition}
\newtheorem{corollary}[theorem]{Corollary}
\newtheorem{example}[theorem]{Example}
\newtheorem{remark}[theorem]{Remark}
\newtheorem{lemma}[theorem]{Lemma}
\newtheorem{final remark}[theorem]{Final Remark}
\newtheorem{definition}[theorem]{Definition}
\begin{document}

\title{Order continuity of Arens extensions of regular multilinear operators}
\author{Geraldo Botelho\thanks{Supported by CNPq Grant
304262/2018-8 and Fapemig Grant PPM-00450-17.}\,\, and  Luis Alberto Garcia\thanks{Supported by a CAPES scholarship.\newline 2020 Mathematics Subject Classification: 46A40, 46B42, 46G25, 47B65.\newline Keywords: Riesz spaces, Banach lattices, Arens extension, separate order continuity.
}}
\date{}
\maketitle

\begin{abstract}\noindent \,\,~First we give a counterexample showing that recent results on separate order continuity of Arens extensions of multilinear operators cannot be improved to get separate order continuity on the product of the whole of the biduals. Then we establish conditions on the operators and/or on the underlying Riesz spaces/Banach lattices so that the extensions are order continuous on the product of the whole biduals. We also prove that all Arens extensions of any regular multilinear operator are order continuous in at least one variable and we study when Arens extensions of regular homogeneous polynomials on a Banach lattice $E$ are order continuous on $E^{**}$.
\end{abstract}

\section{Introduction}

The second adjoint $u^{**}$ of a linear operator $u$, which is a bidual extension of $u$, is a powerful tool in several areas of mathematics. For multilinear operators, the same role has been played by Arens extensions, which have been extensively studied for the last 70 years since Arens' seminal paper \cite{arens}. In order to state the two recent results that have motivated our work, let us fix some notation. By $E^\sim$ we denote the order dual of a Riesz space $E$, hence $E^{\sim\sim} = (E^\sim)^\sim$ denotes its second order dual. For a Banach lattice $E$, $E^*$ denotes its topological dual, hence $E^{**}$ stands for its bidual. The symbols $(E^\sim)_n^{\sim}$ and $(E^*)_n^{*}$ stand for the corresponding subspaces formed by the order continuous functionals. The results that motivated our research are the following:

\medskip

\noindent $\bullet$ Buskes and Roberts (2019) \cite[Theorem 3.4]{Buskes}: If $A \colon E_1 \times \cdots \times E_m \longrightarrow F$ is an $m$-linear operator of order bounded variation between Riesz spaces, then its Arens extension $A^{[m+1]*} \colon E_1^{\sim\sim} \times \cdots \times E_m^{\sim\sim} \longrightarrow F^{\sim\sim}$ is separately order continuous on $(E_1^\sim)_n^{\sim} \times \cdots \times (E_m^\sim)_n^{\sim}.$

\medskip

\noindent
$\bullet$ Boyd, Ryan and Snigireva (2021) \cite[Theorem 1]{ryan1}: If $A \colon E_1 \times \cdots \times E_m \longrightarrow F$ is a regular $m$-linear operator between Banach lattices, with $F$ Dedekind complete, then its Arens extension $A^{[m+1]*} \colon E_1^{**} \times \cdots \times E_m^{**} \longrightarrow F^{**}$ is separately order continuous on $(E_1^*)_n^{*} \times \cdots \times (E_m^*)_n^{*}.$

The obvious question is whether or not these results can be improved to get order continuity on $E_1^{\sim\sim} \times \cdots \times E_m^{\sim\sim}$ and $E_1^{**} \times \cdots \times E_m^{**}$, respectively. By means of a counterexample we show that this is not the case (cf. Section 3). Actually our counterexample discloses an interesting phenomenon: for a certain regular bilinear form $A$ on $\ell_1 \times c_0$, its Arens extension $A^{***}$ is not separately order continuous on $\ell_1^{**} \times c_0^{**}$ (more  precisely, it is order continuous in the first variable but not in the second one), while the other Arens extension of $A$ is. Then we proceed to find conditions on the operator and/or on the underlying spaces so that all Arens extensions are separately order continuous on the product of the whole of the biduals. In Section 4 we prove that this holds for finite sums of multiplicative operators from Riesz spaces to Archimedean $f$-algebras, in particular for operators of finite type between arbitrary Riesz spaces. The main result of Section 4 (Theorem \ref{quasesoc}): (i) implies that all Arens extensions of any regular multilinear operator between Riesz spaces are order continuous in at least one variable, (ii) implies that all Arens extensions of a regular homogeneous polynomial from the Riesz space $E$ to a Riesz space $F$ are order continuous at the origin on $E^{\sim\sim}$, (iii) improves the results of Boyd, Ryan and Snigireva and of Buskes and Roberts for regular operators (see Remark \ref{remm}). In the final Section 5 we give sufficient conditions on the Banach lattices $E_1, \ldots, E_m$ so that Arens extensions of any regular $m$-linear operator from $E_1 \times \cdots \times E_m$ to an arbitrary Banach lattice $F$ are separately order continuous on $E_1^{**} \times \cdots \times E_m^{**}$; and conditions so that Arens extensions of regular homogeneous polynomials on a Banach lattice $E$ are order continuous on $E^{**}$.

In Section 2 we discuss briefly the notion of order continuity of linear operators and recall the characterization of the Arens extensions of regular multilinear operators between Riesz spaces that shall fit our purposes. Although these extensions are usually called Aron-Berner extensions in the case of operators between Banach spaces (see \cite{aronberner, livrosean}), for simplicity we shall refer to Arens extensions even for operators between Banach lattices.

\section{Background}

Our references to Riesz spaces, Banach lattices and regular linear operators are the canonical ones \cite{positiveoperators, nieberg, schaefer}.

The following three definitions of order convergence can be found in the literature (see \cite{abramovich}). A net $(x_{\alpha})_{\alpha\in\Omega}$  in a Riesz space $E$ is said to be:\\
$\bullet$ \textit{order convergent} to $x\in E$ if there is a net   $(y_{\alpha})_{\alpha\in\Omega}$  in $E$ such that $y_{\alpha}\downarrow 0$ and $|x_{\alpha}-x|\leq y_{\alpha}$ for every $\alpha\in\Omega$.\\
$\bullet$ \textit{1-convergent} to $x\in E$ if there are a net  $(y_{\alpha})_{\alpha\in\Omega}$  in $E$ and $\alpha_{0}\in \Omega$ such that $y_{\alpha}\downarrow 0$ and $|x_{\alpha}-x|\leq y_{\alpha}$ for every $\alpha\geq \alpha_{0}$.\\
$\bullet$ \textit{2-convergent} to $x\in E$ is there are a net  $(y_{\beta})_{\beta\in\Gamma}$  in $E$ such that $y_{\beta}\downarrow 0$ and for every $\beta\in \Gamma$ there exists $\alpha_{0}\in\Omega$ such that $|x_{\alpha}-x|\leq y_{\beta}$ for any $\alpha\geq \alpha_{0}$.

If $E$ is Dedekind complete, then the notions of 1-convergence and 2-convergence coincide \cite{abramovich}, but otherwise they may be different (see \cite[Example 1.4]{abramovich}).

Order continuity can be considered with respect to any of these three notions of order convergence: a linear operator $T \colon E \longrightarrow F$ between Riesz spaces is said to be {\it order continuous} ({\it 1-order continuous}, {\it 2-order continuous}, respectively) if  $(T(x_{\alpha}))_{\alpha\in\Omega}$ is order convergent (1-convergent, 2-convergent, respectively) to zero in $F$ whenever $(x_{\alpha})_{\alpha\in\Omega}$ is order convergent (1-convergent, 2-convergent, respectively) to zero in $E$.

The following coincidences are known (see \cite[Theorem 1.56]{positiveoperators} or \cite[Proposition 1.3.9]{nieberg}):\\
$\bullet$ If $F$ is Dedekind complete, then $T$ is order continuous if and only if $T$ is 1-order continuous.\\
$\bullet$ If $E$ and $F$ are Dedekind complete, then $T$ is order continuous if and only if $T$ is 1-order continuous if and only if $T$ is 2-order continuous.

In most cases in this paper we will investigate the order continuity of regular linear operators from $E^{\sim\sim}$ to $F^{\sim\sim}$, where $E$ and $F$ are Riesz spaces, or from $E^{**}$ to $F^{**}$, where $E$ and $F$ are Banach lattices. Since all these spaces are Dedekind complete, we are free to use any of the three notions of order continuity. We shall denote the order convergence by $x_\alpha \stackrel{o}{\longrightarrow} x$.

For the theory of regular multilinear operators and regular homogeneous polynomials we refer to \cite{bu, Buskes, loane}. An $m$-linear operator $A \colon E_1 \times \cdots \times E_m \longrightarrow F$ is {\it separately order continuous} ({\it separately 1-order continuous}, {\it separately 2-order continuous}) if for all $j \in \{1, \ldots, m\}$ and $x_k \in E_k, k = 1,\ldots, m, k\neq j$, the linear operator
$$x_j \in E_j \mapsto A(x_1, \ldots, x_m) \in F $$
is order continuous (1-order continuous, 2-order continuous). For the definition of joint order continuity see \cite[p.\,234]{ryan1}. If $A$ is a regular operator between Banach lattices with $F$ Dedekind complete, then $A$ is separately order continuous if and only if $A$ is jointly order continuous \cite[Theorem 2]{ryan1}.

Now we recall the description of the Arens extensions of regular multilinear operators between Riesz spaces as presented in \cite{lg}. By $J_E \colon E \longrightarrow E^{\sim\sim}$ we denote the canonical operator ($J_E(x)(x'') = x''(x)$), which happens to be a Riesz homomorphism.

Given Riesz spaces $E_1, \ldots, E_m,F$, the space of regular $m$-linear operators from $E_1 \times \cdots \times E_m$ to $F$ is denoted by ${\cal L}_r(E_1, \ldots, E_m;F)$. When $F$ is the scalar field we write ${\cal L}_r(E_1, \ldots, E_m)$. $S_m$ stands for the set of permutations of $\{1, \ldots, m\}$. For $\rho\in S_{m}$ and $k\in \{1,\ldots,m\}$, we fix the following notation:
$$E_{1},\ldots,\,_{\rho(1)}E,\ldots,\,_{\rho(k-1)}E,\ldots,E_{m}=\left\{ \begin{array}{cl}
E_{1},\ldots, E_{m} \mbox{~in this order} & \mbox{if}\,\ k=1, \\
E_{1},\ldots, E_{m} \mbox{~in this order, where}\\ E_{\rho(1)},\ldots,
   E_{\rho(k-1)} \mbox{~are removed} & \mbox{if}~ k=2,\ldots,m.
\end{array}\right.$$
For instance, $(E_1,\,_2E, E_3) = (E_1, E_3)$.
The same procedure defines the $(m-k+1)$-tuple $(x_{1},\ldots,\,_{\rho(1)}x,\ldots,\,_{\rho(k-1)}x,\ldots,x_{m})$ and the cartesian product $E_{1}\times\cdots \times\,_{\rho(1)}E \times\cdots \times\,_{\rho(k-1)}E\times\cdots \times E_{m}$.   Moreover, for $k=1,\ldots,m-1$, we write $$E_{1},\ldots,\,_{\rho(1)}E,\ldots,\,_{\rho(k)}E,\ldots,E_{m}=E_{1},\ldots, E_{m}$$ in this order, where $E_{\rho(1)},\ldots,E_{\rho(k)}$ are removed. In the same fashion we define the $(m-k)$-tuple $(x_{1},\ldots,_{\rho(1)}x,\ldots,_{\rho(k)}x,\ldots,x_{m})$ and the corresponding cartesian product.

  Finally, for $k=m$ we write $\mathcal{L}(E_{1},\ldots,\,_{\rho(1)}E,\ldots,\,_{\rho(k)}E,\ldots,E_{m};\mathbb{R})=\mathbb{R}.$

Let $k\in\{1,\ldots,m\}$, a permutation $\rho\in S_{m}$, Riesz spaces $E_{1},\ldots,E_{m}$ and an operator $A\in\mathcal{L}_{r}(E_{1},\ldots,\,_{\rho(1)}E,\ldots,\,_{\rho(k-1)}E,\ldots,E_{m})$ be given. For $x_{r}\in E_{r}, r\in \{1,\ldots,m\}\setminus \{\rho(1),\ldots,\rho(k)\}$, consider the linear functionals
$$A(x_{1},\ldots,\,_{\rho(1)}x,\ldots,\,_{\rho(k)}x;\bullet\,;\ldots, x_{m})\colon E_{\rho(k)}\longrightarrow \mathbb{R},$$
\begin{equation}\label{defpunto}
A(x_{1},\ldots,\,_{\rho(1)}x,\ldots,\,_{\rho(k)}x;\bullet\,;\ldots,x_{m})(x_{\rho(k)})
=A(x_{1},\ldots,\,_{\rho(1)}x,\ldots,\,_{\rho(k-1)}x,\ldots,x_{m}),
\end{equation}
where the dot $\bullet$ is placed at the $\rho(k)$-th coordinate.  Observe that for $k=m$ we have $A(x_{1},\ldots,\,_{\rho(1)}x,\ldots,\,_{\rho(m)}x;\bullet\,;\ldots,x_{m})=A\in E_{\rho(m)}^{\sim}$.

For every $x_{\rho(k)}^{\prime\prime} \in E_{\rho(k)}^{\sim\sim}$, the map
\begin{equation}\label{defope}\overline{x_{\rho(k)}^{\prime\prime}}^{\rho}\colon \mathcal{L}_{r}(E_{1},\ldots,\,_{\rho(1)}E,\ldots,\,_{\rho(k-1)}E,\ldots,E_{m})\longrightarrow \mathcal{L}_{r}(E_{1},\ldots,\,_{\rho(1)}E,\ldots,\,_{\rho(k)}E,\ldots,E_{m}),
\end{equation}
$$\overline{x_{\rho(k)}^{\prime\prime}}^{\rho}(A)(x_{1},\ldots,\,_{\rho(1)}x,
\ldots,\,_{\rho(k)}x,\ldots,x_{m})=x_{\rho(k)}^{\prime\prime}(A(x_{1},\ldots,\,_{\rho(1)}x,\ldots,\,_{\rho(k)}x;\bullet\,;\ldots,x_{m})),$$ %ntão, $\overline{x_{\rho(k)}^{\prime\prime}}^{\rho}$ é
is a regular linear operator and $\Big|\overline{x_{\rho(k)}^{\prime\prime}}^{\rho} \Big|\leq \overline{|x_{\rho(k)}^{\prime\prime}|}^{\rho}$. Furthermore, if $0\leq x_{\rho(k)}^{\prime\prime} \in E_{\rho(k)}^{\sim\sim}$ then the operator $\overline{x_{\rho(k)}^{\prime\prime}}^{\rho}$ is positive \cite[Proposition 2.1]{lg}.

Given a permutation $\rho\in S_{m}$ and a regular $m$-linear operator $A \colon E_{1}\times\cdots\times E_{m} \longrightarrow F$, the Arens extension of $A$ with respect to $\rho$ is the operator $AR_{m}^{\rho}(A)\colon E_{1}^{\sim\sim}\times
\cdots\times E_{m}^{\sim\sim}\longrightarrow F^{\sim\sim}$ defined by $$ AR_{m}^{\rho}(A)(x_{1}^{\prime\prime},\ldots,x_{m}^{\prime\prime})(y^{\prime})=\big(\overline{x_{
\rho(m)}^{\prime\prime}}^{\rho}\circ\cdots\circ\overline
{x_{\rho(1)}^{\prime\prime}}^{\rho}\big)(y^{\prime}\circ A)$$
for every $y^{\prime}\in F^{\sim}$. According to \cite[Theorem 2.2]{lg},  $AR_{m}^{\rho}(A)$ is a regular $m$-linear operator that extends $A$ in the sense that $AR_{m}^{\rho}(A)\circ (J_{E_{1}},\ldots,J_{E_{m}})=J_{F}\circ A.$ Moreover,  $AR_{m}^{\rho}(A)$ is positive for positive $A$.

The extension $A^{[m+1]*}$ from \cite{ryan1, Buskes} is recovered by considering the permutation $\theta(m) = 1,  \theta(m-1) = 2, \ldots,\theta(2) = m-1, \theta(1) = m$, that is, $AR_m^\theta(A) = A^{*[m+1]}$. In particular, $AR_2^\theta(A) = A^{***}$ in the bilinear case $m = 2$.

\section{The counterexample}
Consider the positive bilinear form
$$A\colon \ell_{1}\times c_{0}\longrightarrow \mathbb{R}~, A((x_{n})_{n=1}^{\infty},(y_{n})_{n=1}^{\infty})=\displaystyle\sum_{n=1}^{\infty}x_{n}y_{n}.$$
The two Arens extensions of $A$ shall be denoted by $A^{***} =AR_2^\theta(A)$ and $AR_2^{\rm id}(A)$, where id is the identity permutation. As announced, we shall prove that $AR_2^{\rm id}(A)$ is separately order continuous on $\ell_1^{**} \times c_0^{**}$ and that $A^{***}$ is order continuous in the first variable but not in the second one. Although everything can be proved directly to this bilinear form, to avoid unnecessary repetitions we shall apply some results that will be proved later.

From Theorem \ref{quasesoc} we know that  $A^{***} \colon \ell_{1}^{\ast\ast}\times c_{0}^{\ast\ast}\longrightarrow\mathbb{R}$ is order continuous in the first variable. %{\rm (a)} Seja $AB_{2}^{\theta}(A)\colon \ell_{1}^{\ast\ast}\times c_{0}^{\ast\ast}\longrightarrow\mathbb{R}$, a extensão de Aron-Berner de $A$. Pela Proposição \ref{quasesoc} $AB_{2}^{\theta}(A)$ é ordem contínuo na primeira coordenada.
Suppose that $A^{***}$ is order continuous in the second variable, that is, %é ordem contínuo na segunda coordenada, isto é, para todo
for every $x^{\ast\ast}\in \ell_{1}^{\ast\ast}$, the linear functional  $A^{***}(x^{\ast\ast},\bullet)\colon c_{0}^{\ast\ast}\longrightarrow \mathbb{R}$ is order continuous. Denoting by % Considere o isomorfismo como espaços de Banach
$\psi\colon \ell_{1}\longrightarrow c_{0}^{*}$ %, \, \psi(x)(y)=\displaystyle\sum_{n=1}^{\infty}x_{n}y_{n}, \text{ para todo } x=(x_{n})_{n=1}^{\infty}\in \ell_{1} \text{ e } y=(y_{n})_{n=1}^{\infty}\in c_{0}.$$
the canonical isometric isomorphism, note that $\psi$ and $\psi^{-1}\colon c_{0}^{\ast}\longrightarrow \ell_{1}$, $ \psi^{-1}(\varphi)=(\varphi(e_{n}))_{n=1}^{\infty}$, are positive operators, hence  $\psi$ is a Riesz homomorphism %são operadores positivos, assim por
\cite[Theorem 2.15]{positiveoperators}. % $\psi$ é um homomorfismo de Riesz,
Moreover, $\psi(x)=A(x,\bullet)$ for every $x\in \ell_{1}$.

\medskip

\noindent \textbf{Claim 1.} $\psi^{\ast\ast}(x^{\ast\ast})=A^{***}(x^{\ast\ast},\bullet)$ for every $x^{\ast\ast}\in \ell_{1}^{\ast\ast}$.

Indeed, given $y^{\ast\ast}\in c_{0}^{\ast\ast}$ and $x\in \ell_{1}$, bearing in mind that $A^{***} =AR_2^\theta(A)$,
$$\psi^{\ast}(y^{\ast\ast})(x)=y^{\ast\ast}(\psi(x))=y^{\ast\ast}(A(x,\bullet))=\overline{y^{\ast\ast}}^{\theta}(A)(x),$$
so $\psi^{\ast}(y^{\ast\ast})=\overline{y^{\ast\ast}}^{\theta}(A)$. Therefore, for  $x^{\ast\ast}\in \ell_{1}^{\ast\ast}$ and $y^{\ast\ast}\in c_{0}^{\ast\ast}$,
$$\psi^{\ast\ast}(x^{\ast\ast})(y^{\ast\ast})=x^{\ast\ast}(\psi^{\ast}(y^{\ast\ast}))=x^{\ast\ast}(\overline{y^{\ast\ast}}^{\theta}(A))=AB_{2}^{\theta}(A)(x^{\ast\ast},y^{\ast\ast}) = A^{***}(x^{\ast\ast}, \bullet)(y^{\ast\ast}).$$
%Então, $\psi^{\ast\ast}(x^{\ast\ast})(y^{\ast\ast})=AB_{2}^{\theta}(A)(x^{\ast\ast},y^{\ast\ast})$, disto segue o afirmado.

\medskip

\noindent\textbf{Claim 2.} $x^{**}\in \ell_{1}^{\ast\ast}$ is order continuous on $\ell_1^*$ if and only if $\psi^{\ast\ast}(x^{\ast\ast})\in c_{0}^{\ast\ast\ast}$ is order continuous on $c_0^{**}$.

%É facil ver que se $x^{\ast\ast}$ é ordem contínuo então $\psi^{\ast\ast}(x^{\ast\ast})$ é ordem contínuo. Vejamos a outra implicação.
Let $x^{**}\in \ell_{1}^{\ast\ast}$ be such that $\psi^{\ast\ast}(x^{\ast\ast})\in c_{0}^{\ast\ast\ast}$ is order continuous on $c_0^{**}$. Supposing that $x^{\ast\ast}$ fails to be order continuous on $\ell_1^*$, the positive functional  $|x^{\ast\ast}|\in \ell_{1}^{\ast\ast}$ is not order continuous on $\ell_1^*$ either \cite[Theorem 1.56]{positiveoperators}. Then there is a net  $(x_{\alpha}^{\ast})_{\alpha\in\Omega}$ in $\ell_{1}^{\ast}$ such that $x_{\alpha}^{\ast}\downarrow 0$ but $\displaystyle\inf_{\alpha\in \Omega}|x^{\ast\ast}|(x_{\alpha}^{\ast})> 0$. %Como $x_{\alpha}^{\ast}\in \ell_{1}^{\ast}$ para todo
For each $\alpha\in\Omega$ let $y_{\alpha}^{\ast\ast}\in c_{0}^{\ast\ast}$ be such that $\psi^{\ast}(y_{\alpha}^{\ast\ast})=x_{\alpha}^{\ast}$.  Thus $$y_{\alpha}^{\ast\ast}=(\psi^{\ast})^{-1}(x_{\alpha}^{\ast})=(\psi^{-1})^{\ast}(x_{\alpha}^{\ast})\downarrow 0 \text{ in } c_{0}^{\ast\ast}$$
because $(\psi^{-1})^{\ast}$ is positive and order continuous \cite[Theorem 1.73]{positiveoperators}. % $(\psi^{-1})^{\ast}$ é ordem contínuo, logo
By assumption $\psi^{\ast\ast}(x^{\ast\ast})$ is order  continuous  on $c_0^{**}$, so is $|\psi^{\ast\ast}(x^{\ast\ast})|$ \cite[Theorem 1.56]{positiveoperators}. Since $\psi$ is a Riesz homomorphism, $\psi^{\ast\ast}$ is as well, so $\psi^{\ast\ast}(|x^{\ast\ast}|)(y_{\alpha}^{\ast\ast})=|\psi^{\ast\ast}(x^{\ast\ast})|(y_{\alpha}^{\ast\ast})\downarrow 0$, from which it follows that
$$0=\inf_{\alpha\in\Omega}\psi^{\ast\ast}(|x^{\ast\ast}|)(y_{\alpha}^{\ast\ast})=
\inf_{\alpha\in\Omega}|x^{\ast\ast}|(\psi^{\ast}(y_{\alpha}^{\ast\ast}))
=\inf_{\alpha\in\Omega}|x^{\ast\ast}|(x_{\alpha}^{\ast})>0.$$
This contradiction proves that $x^{\ast\ast}\in\ell_{1}^{\ast\ast}$ is order continuous on $\ell_1^*$ . The reverse implication if straightforward.

\medskip
\noindent\textbf{Claim 3.} $\ell_{1}^{\ast\ast}$ contains a functional that fails to be order continuous on $\ell_1^*$.

Let $c$ be the space of convergent real sequences and consider the positive linear functional $\varphi \in c^* $ given by  $\varphi((x_{n})_{n=1}^{\infty})=\displaystyle\lim_{n\rightarrow\infty}x_{n}$. Since $c$ is a majorizing subspace of $\ell_{\infty}$, $\varphi$ admits a positive extension $\widetilde{\varphi}\in \ell_{\infty}^{\ast}$ \cite[Theorem 1.32]{positiveoperators}. Suppose that $\widetilde{\varphi}$ is $\sigma$-order continous on $\ell_\infty$. For each $n\in\mathbb{N}$ let $x_{n}=(1,\ldots,1,0,\ldots) = e_1 + \cdots + e_n\in c$ and $y=(1,1,\ldots)\in c$. Note that $0\leq x_{n}\uparrow y$ and, since  $\widetilde{\varphi}$ is a positive $\sigma$-order continuous operator,   $0\leq \widetilde{\varphi}(x_{n})\uparrow \widetilde{\varphi}(y)$ (see \cite[p.\,46]{positiveoperators}). So, %Por outro lado, $\widetilde{\varphi}(y)=1$ e $\widetilde{\varphi}(x_{n})=0$ para todo $n\in\mathbb{N}$ segue que
$$1=\widetilde{\varphi}(y)=\sup_{n\in\mathbb{N}}\widetilde{\varphi}(x_{n})=0,$$
which proves that $\widetilde{\varphi}\in \ell_{\infty}^{\ast}$ is not $\sigma$-order continuous on $\ell_\infty$. Considering the canonical Riesz isomorphism $\phi\colon\ell_{\infty}\longrightarrow \ell_{1}^{\ast}$, %,\, \phi(x)(y)=\displaystyle\sum_{n=1}^{\infty}x_{n}y_{n}$, então existe
there is $z^{\ast\ast}\in \ell_{1}^{\ast\ast}$ such that $\phi^{\ast}(z^{\ast\ast})=\widetilde{\varphi}$. Since $\widetilde{\varphi}\in \ell_{\infty}^{\ast}$  fails to be $\sigma$-order continuous, there is a sequence  %então existe uma sequência
$(z_{n})_{n=1}^{\infty}$ in $\ell_{\infty}$ such that $z_{n}\downarrow 0$ and $\displaystyle\inf_{n\in\mathbb{N}}\widetilde{\varphi}(z_{n})>0$. Furthermore, there are $y_{n}^{\ast}\in \ell_{1}^{\ast}$, $n \in \mathbb{N}$, such that  $\phi^{-1}(y_{n}^{\ast})=z_{n}$, hence  $y_{n}^{\ast}=\phi(z_{n})\downarrow $ because $\phi$ is positive, so $0\leq y_{n}^{\ast}\downarrow$. Suppose that there exists $y^{\ast}\in \ell_{1}^{\ast}$ such that  $0<y^{\ast}\leq y_{n}^{\ast}$ for every $n\in\mathbb{N}$. On the one hand, as $\phi^{-1}$ is positive,  $$0 \leq \phi^{-1}(y^{\ast})\leq \phi^{-1}(y_{n}^{\ast})=z_{n} \mbox{ for every } n,$$
from which we conclude that $\phi^{-1}(y^{\ast})=0$, and so $y^{\ast}=0$ once $\phi^{-1}$ is injective. This shows that $y_{n}^{\ast}\downarrow 0$ in $\ell_{1}^{\ast}$. On the other hand,
\begin{align*}
\inf_{n\in\mathbb{N}}z^{\ast\ast}(y_{n}^{\ast})=
\inf_{n\in\mathbb{N}}(\phi^{\ast})^{-1}(\widetilde{\varphi})(y_{n}^{\ast})
=\inf_{n\in\mathbb{N}}(\phi^{-1})^{\ast}(\widetilde{\varphi})(y_{n}^{\ast})
=\inf_{n\in\mathbb{N}}\widetilde{\varphi}(\phi^{-1}(y_{n}^{\ast}))
=\inf_{n\in\mathbb{N}}\widetilde{\varphi}(z_{n})>0,
\end{align*}
proving that $z^{\ast\ast}$ is not order continuous on $\ell_\infty$, as claimed. % e portanto não é ordem contínuo e isto prova o afirmado.\\

Finally, combining Claims 1 and 2 we have that $A^{***}(z^{**}, \bullet)= \psi^{**}(z^{**})$  is not order continuous on $c_0^{**}$. We have established that $A^{***}$ is  order continuous in the first variable and fails to be order continuous in the second variable. %  . As mentioned before, $AB_{2}^{\theta}(A)(x^{\ast\ast},\bullet)$ é ordem contínuo então pela afirmação $1$, $\psi^{\ast\ast}(x^{\ast\ast})$ é ordem contínuo, logo pela afirmação $2$, $x^{\ast\ast}$ é ordem contínuo. Isto mostra que todo funcional em $\ell_{1}^{\ast\ast}$ é ordem contínuo, mais pela afirmação $3$ isto nã acontece. Portanto $AB_{2}^{\theta}(A)(x^{\ast\ast},\bullet)$ não é ordem contínuo.\\

As to the other Arens extension of $A$, namely $AR_{2}^{\rm id}(A)\colon \ell_{1}^{\ast\ast}\times c_{0}^{\ast\ast}\longrightarrow\mathbb{R}$, %a outra extensão de Aron-Berner de $A$.
since $c_{0}^{\ast} = \ell_{1}$ has order continuous norm, Corollary \ref{cor1} guarantees that $AR_{2}^{id}(A)$ is separately order continuous, hence jointly order continuous by  \cite[Theorem 2]{ryan1}.

Since the bilinear form $A$ is regular and of bounded order variation, this example shows that the results of Buskes and Roberts and of Boyd, Ryan and Snigireva quoted in the introduction cannot be improved to get separate order continuity on the product of the whole biduals.
%é separadamente ordem contínua.

\section{Operators between Riesz spaces}

In this section we present our results on order continuity on the whole of the biduals of Arens extensions of multilinear operators on Riesz spaces. The main result of the section, namely Theorem \ref{quasesoc}, is a multipurpose result: in this section it will be used to prove that Arens extensions of regular homogeneous polynomials are always order continuous at the origin on the whole of the bidual of the domain space, to extend \cite[Theorem 1]{ryan1}, to show that Arens extensions are always order continuous in at least one variable and, finally, it will be helpful a couple of times in the next section.

Recall that a {\it Riesz algebra} $\mathcal{A}$ é is a Riesz space which is an associative algebra with respect to a produtc $\ast$ such that $x\ast y\geq 0$ for all $x,y\in \mathcal{A}^{+}$. And that a Riesz algebra $(\mathcal{A},\ast)$ is an {\it $f$-algebra} if $x\wedge y=0$ in $\mathcal{A}$ implies that $(x\ast z)\wedge y=(z\ast x)\wedge y=0$ for every $z\in \mathcal{A}^{+}.$ %
If $(\mathcal{A},\ast)$ is an $f$-algebra, then the Arens product $\odot$ , defined as follows, makes ${\cal A}^{\sim\sim}$ an $f$-algebra \cite{yilma}: %Seja $(\mathcal{A},\ast)$ uma álgebra, defina o produto de Arens $\odot$ em $\mathcal{A}^{\sim\sim}$ da seguinte maneira:
for $ x\in \mathcal{A}$, $y^{\prime}\in \mathcal{A}^{\sim}$ and $x^{\prime\prime}, y^{\prime\prime}\in \mathcal{A}^{\sim\sim}$,
\begin{align*}
y^{\prime}\cdot x&\colon \mathcal{A}\longrightarrow \mathbb{R}~,~ (y^{\prime}\cdot x)(y)=y^{\prime}(x\ast y).\\
 x^{\prime\prime}\diamond y^{\prime}&\colon \mathcal{A}\longrightarrow \mathbb{R}~,~(x^{\prime\prime}\diamond y^{\prime})(y)=x^{\prime\prime}(y^{\prime}\cdot y).\\
x^{\prime\prime}\odot y^{\prime\prime}&\colon \mathcal{A}^{\sim}\longrightarrow \mathbb{R}~,~(x^{\prime\prime}\odot y^{\prime\prime})(z^{\prime})=x^{\prime\prime}(y^{\prime\prime}\diamond z^{\prime} ).
\end{align*}

 An operator $A\in \mathcal{L}_{r}(E_{1},\ldots,E_{m};\mathcal{A})$ is {\it multiplicative} if there are regular linear operators $T_{i}\colon E_{i}\longrightarrow\mathcal{A}, i=1,\ldots,m$, such that $A(x_{1},\ldots,x_{m})=T_{1}(x_{1})\ast\cdots\ast T_{m}(x_{m})$ for all $x_{1}\in E_{1},\ldots,x_{m}\in E_{m}$.

Since Arens extensions $AR_{m}^{\rho}(A)$ of multilinear operators $A$ are mappings between Dedekind complete spaces, we can use any of the three notions of order continuous linear operators to investigate the separate order continuity of $AR_{m}^{\rho}(A)$.

\begin{proposition}\label{propo3}
Let $E_{1},\ldots,E_{m}$ be Riesz spaces and $ (\mathcal{A},\ast)$ be an Archimedean $f$-algebra. If $A\in\mathcal{L}_{r}(E_{1},\ldots,E_{m};\mathcal{A})$ is a finite sum of multiplicative operators, then all Arens extensions of $A$,  $AR_{m}^{\rho}(A)$, $\rho\in S_{m}$, coincide and are separately order continuous.
\end{proposition}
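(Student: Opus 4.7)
The map $A \mapsto AR_m^\rho(A)$ is linear and a finite sum of separately order continuous operators is separately order continuous, so it suffices to handle a single multiplicative operator. Hence assume $A(x_1, \ldots, x_m) = T_1(x_1) \ast \cdots \ast T_m(x_m)$ with $T_i \in \mathcal{L}_r(E_i; \mathcal{A})$.

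The plan is to establish the explicit formula
\begin{equation*}
AR_m^\rho(A)(x_1'', \ldots, x_m'') = T_1^{\sim\sim}(x_1'') \odot T_2^{\sim\sim}(x_2'') \odot \cdots \odot T_m^{\sim\sim}(x_m'')
\end{equation*}
for every $\rho \in S_m$ and every $(x_1'', \ldots, x_m'') \in E_1^{\sim\sim} \times \cdots \times E_m^{\sim\sim}$, where $\odot$ is the Arens product on the Archimedean $f$-algebra $\mathcal{A}^{\sim\sim}$ \cite{yilma}. I would proceed by induction on $m$, unwinding the iterated composition $\overline{x_{\rho(m)}''}^\rho \circ \cdots \circ \overline{x_{\rho(1)}''}^\rho$ applied to $y' \circ A$: each application of $\overline{x_{\rho(k)}''}^\rho$ introduces a factor of $T_{\rho(k)}^{\sim\sim}(x_{\rho(k)}'')$ combined with the previously accumulated functional by means of the dot and diamond constructions that define $\odot$ (a direct computation in the case $m=2$ already makes the pattern transparent). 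The outcome is the iterated $\odot$-product of the $T_i^{\sim\sim}(x_i'')$ in some order prescribed by $\rho$. Since every Archimedean $f$-algebra is commutative and associative, and $\mathcal{A}^{\sim\sim}$ inherits this structure under $\odot$, the order of factors is immaterial; hence $AR_m^\rho(A)$ is independent of $\rho$.

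Granting the formula, fix $j \in \{1, \ldots, m\}$ and $x_k'' \in E_k^{\sim\sim}$ for $k \neq j$. Setting $c''$ equal to the $\odot$-product of $T_k^{\sim\sim}(x_k'')$ over $k \neq j$, the partial map in the $j$-th variable becomes $x_j'' \mapsto c'' \odot T_j^{\sim\sim}(x_j'')$. This factors as the composition of $T_j^{\sim\sim}\colon E_j^{\sim\sim} \to \mathcal{A}^{\sim\sim}$, which is the second order adjoint of a regular operator between Riesz spaces and therefore order continuous (cf.\ \cite[Theorem 1.73]{positiveoperators}), with multiplication by $c''$ in $\mathcal{A}^{\sim\sim}$, which is order continuous because multiplication in a Dedekind complete Archimedean $f$-algebra is order continuous in each variable. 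Composition of order continuous maps is order continuous, establishing separate order continuity.

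The main obstacle I foresee is the bookkeeping in the inductive verification of the explicit formula: the nesting of $\overline{\,\cdot\,}^\rho$ operators, the shifting of coordinates, and the way the dot and diamond functionals assemble into the Arens product all require careful tracking. Once the formula is in hand, both the $\rho$-independence and the separate order continuity are essentially structural consequences of $\mathcal{A}^{\sim\sim}$ being a Dedekind complete Archimedean $f$-algebra under $\odot$.
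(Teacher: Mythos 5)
Your proposal is correct, and its skeleton matches the paper's: reduce by linearity to a single multiplicative operator, establish the formula $AR_m^\rho(A)(x_1'',\ldots,x_m'')=T_1''(x_1'')\odot\cdots\odot T_m''(x_m'')$, and read off both the coincidence of the extensions (commutativity and associativity of $\odot$ on the Archimedean $f$-algebra $\mathcal{A}^{\sim\sim}$) and the separate order continuity. Two remarks. First, the paper does not reprove the formula; it cites it from \cite{lg} (Remark 3.3 and the proof of Theorem 3.2 there), so the inductive unwinding you flag as the main obstacle is work the paper outsources --- your plan for it is viable but is the one part of your write-up that remains a sketch. Second, and more interestingly, your order-continuity step takes a genuinely different route. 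The paper argues by domination: it bounds $|AR_m^\rho(B)(\ldots,x_{\alpha_j}'',\ldots)|$ by $|T_j''|(z_{\alpha_j}'')\odot\varphi$ with $z_{\alpha_j}''\downarrow 0$, then shows this net decreases to $0$ by evaluating against positive $y'\in\mathcal{A}^\sim$ through the $\diamond$ construction and invoking the infimum criterion of \cite[Theorem 1.18]{positiveoperators}, together with the $f$-algebra identity $|a\odot b|=|a|\odot|b|$. You instead factor the partial map in the $j$-th variable as $\pi_{c''}\circ T_j^{\sim\sim}$ and invoke (i) order continuity of order adjoints \cite[Theorem 1.73]{positiveoperators} and (ii) order continuity of multiplication by a fixed element of a Dedekind complete Archimedean $f$-algebra. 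Claim (ii) is true --- multiplication by a positive element is a positive orthomorphism, and orthomorphisms on Archimedean Riesz spaces are order continuous --- but it is precisely the content the paper establishes by hand in its special case, so you should cite it rather than assert it (and note that for general $c''$ you split into $(c'')^+$ and $(c'')^-$). Your route is cleaner and more conceptual; the paper's is more self-contained, using only the $\diamond$/$\odot$ definitions.
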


\begin{proof}
Given a multiplicative operator  $B\in\mathcal{L}_{r}(E_{1},\ldots,E_{m};\mathcal{A})$, let $T_{i}\colon E_{i}\longrightarrow\mathcal{A}, i=1,\ldots,m$, be such that  $B(x_{1},\ldots,x_{m})=T_{1}(x_{1})\ast\cdots\ast T_{m}(x_{m})$ for all $x_{1}\in E_{1},\ldots,x_{m}\in E_{m}$. By \cite[Remark 3.3 and the proof of Theorem 3.2]{lg} we have that, for each $\rho\in S_{m}$ and all $x_{1}^{\prime\prime}\in E_{1}^{\sim\sim},\ldots,x_{m}^{\prime\prime}\in E_{m}^{\sim\sim}$,
$$AR_{m}^{\rho}(B)(x_{1}^{\prime\prime},\ldots,x_{m}^{\prime\prime})=T_{\rho(m)}^{\prime\prime}(x_{\rho(m)}^{\prime\prime})\odot \cdots\odot T_{\rho(1)}^{\prime\prime}(x_{\rho(1)}^{\prime\prime}).$$
The Arens product $\odot$ makes $\mathcal{A}^{\sim\sim}$ a Dedekind complete, hence Archimedean, commutative $f$-algebra \cite[Corollaries 3.5 and 3.6]{yilma}, so %  e por \cite[Corolário 3.6]{yilma} a $f$-álgebra $(\mathcal{A}^{\sim\sim},\odot)$ é comutativa, então
$$AR_{m}^{\rho}(B)(x_{1}^{\prime\prime},\ldots,x_{m}^{\prime\prime})=T_{1}^{\prime\prime}(x_{1}^{\prime\prime})\odot \cdots\odot T_{m}^{\prime\prime}(x_{m}^{\prime\prime}),$$
which gives, in particular, that all Arens extensions of $A$ coincide. %Mais ainda por \cite[Exercício 12, pág. 131]{positiveoperators} segue que
%\begin{equation}\label{e1}
%|a^{\prime\prime}\odot b^{\prime\prime}|=|a^{\prime\prime}|\odot |b^{\prime\prime})|, \text{ para todo } a^{\prime\prime}, b^{\prime\prime}\in \mathcal{A}^{\sim\sim}.
%\end{equation}
In order to check that $AR_{m}^{\rho}(B)$ is separately order continuous, let  $j\in\{1,\ldots.m\}$, $x_{i}^{\prime\prime}\in  E_{i}^{\sim\sim}, i=1,\ldots,m$, with $i\neq j$ be given and let $(x_{\alpha_{j}}^{\prime\prime})_{\alpha_{j}\in\Omega_{j}}$ be a net in $E_{j}^{\sim\sim}$ such that $x_{\alpha_{j}}^{\prime\prime} \xrightarrow{\,\, o \,\,} 0$. There exists a net $(z_{\alpha_{j}}^{\prime\prime})_{\alpha_{j}\in\Omega_{j}}$  and $\alpha_{j_{0}}\in \Omega_{j}$ such that $z_{\alpha_{j}}^{\prime\prime} \downarrow 0$ and $|x_{\alpha_{j}}^{\prime\prime}|\leq z_{\alpha_{j}}^{\prime\prime}$ for every  $\alpha_{j}\geq \alpha_{j_{0}}$. The functional $$\varphi:=|T_{1}^{\prime\prime}(x_{1}^{\prime\prime})|\odot\cdots\odot |T_{j-1}^{\prime\prime}(x_{j-1}^{\prime\prime})|\odot |T_{j+1}^{\prime\prime}(x_{j+1}^{\prime\prime})|\odot \cdots\odot |T_{m}^{\prime\prime}(x_{m}^{\prime\prime})|\in \mathcal{A}^{\sim\sim}$$
 is positive. Using again that the product $\odot$ is commutative and \cite[Exercise 12, p.\,131]{positiveoperators},
\begin{align*}
|AR_{m}^{\rho}&(B)(x_{1}^{\prime\prime},\ldots,x_{\alpha_{j}}^{\prime\prime},\ldots,x_{m}^{\prime\prime})|=| T_{1}^{\prime\prime}(x_{1}^{\prime\prime})\odot\cdots\odot  T_{j}^{\prime\prime}(x_{\alpha_{j}}^{\prime\prime})\odot \cdots\odot T_{m}^{\prime\prime}(x_{m}^{\prime\prime})|\\
&=|T_{1}^{\prime\prime}(x_{1}^{\prime\prime})|\odot\cdots\odot  |T_{j-1}^{\prime}(x_{j-1}^{\prime\prime})|\odot |T_{j}^{\prime\prime}(x_{\alpha_{j}}^{\prime\prime})|\odot |T_{j+1}^{\prime\prime}(x_{j+1}^{\prime\prime})|\odot \cdots\odot |T_{m}^{\prime\prime}(x_{m}^{\prime\prime})|\\
&=|T_{j}^{\prime\prime}(x_{\alpha_{j}}^{\prime\prime})|\odot \big(|T_{1}^{\prime\prime}(x_{1}^{\prime\prime})|\odot\cdots\odot |T_{j-1}^{\prime\prime}(x_{j-1}^{\prime\prime})|\odot |T_{j+1}^{\prime\prime}(x_{j+1}^{\prime\prime})|\odot \cdots\odot |T_{m}^{\prime\prime}(x_{m}^{\prime\prime})|\big)\\
&=|T_{j}^{\prime\prime}(x_{\alpha_{j}}^{\prime\prime})|\odot \varphi\leq |T_{j}^{\prime\prime}|(|x_{\alpha_{j}}^{\prime\prime}|)\odot \varphi\leq |T_{j}^{\prime\prime}|(z_{\alpha_{j}}^{\prime\prime})\odot \varphi.
\end{align*}
%proving that $|AR_{m}^{\rho}(B)(x_{1}^{\prime\prime},\ldots,x_{\alpha_{j}}^{\prime\prime},\ldots,x_{m}^{\prime\prime})|\leq |T_{j}^{\prime\prime}|(z_{\alpha_{j}}^{\prime\prime})\odot \varphi$.
Now it is enough to prove that $|T_{j}^{\prime\prime}|(z_{\alpha_{j}}^{\prime\prime})\odot \varphi\downarrow 0$. Let $0\leq y^{\prime}\in \mathcal{A}^{\sim}$ be given. Then % Para todo $y^{\prime}\in \mathcal{A}^{\sim}$ positivo
$\varphi\diamond y^{\prime}$ is positive and, since $|T_{j}^{\prime\prime}|$ is order continuous and positive   \cite[Theorems 1.56 and 1.73]{positiveoperators}, %$|T_{j}^{\prime\prime}|$ é ordem contínuo e positivo então $|T_{j}^{\prime\prime}|(z_{\alpha_{j}}^{\prime\prime})\downarrow 0$ e de,
$$\big(|T_{j}^{\prime\prime}|(z_{\alpha_{j}}^{\prime\prime})\odot \varphi\big)(y^{\prime})=|T_{j}^{\prime\prime}|(z_{\alpha_{j}}^{\prime\prime})(\varphi\diamond y^{\prime})\downarrow 0,$$
%obtemos $\big(|T_{j}^{\prime\prime}|(z_{\alpha_{j}}^{\prime\prime})\odot \varphi\big)(y^{\prime})\downarrow 0$, para todo $0\leq y^{\prime}\in \mathcal{A}^{\sim}$ logo por ,
from which it follows that $\big(|T_{j}^{\prime\prime}|(z_{\alpha_{j}}^{\prime\prime})\odot \varphi\big)\downarrow 0$ \cite[Teorema 1.18]{positiveoperators} and gives the separate order continuity of $AR_{m}^{\rho}(B)$.

The linearity of the correspondence $A \mapsto AR_{m}^{\rho}(A)$ gives the result for finite sums of multiplicative operators. %como queriamos. Agora se $A\in\mathcal{L}_{r}(E_{1},\ldots,E_{m};\mathcal{A})$ é soma de operadores multiplicativos, isto é, $A=\displaystyle\sum_{n=1}^{k}A_{n}$, onde cada $A_{n}\in\mathcal{L}_{r}(E_{1},\ldots,E_{m};\mathcal{A})$ é multiplicativo, pelo feito acima $AR_{m}^{\rho}(A_{n}), n=1,\ldots,k$ não dependem de $\rho$ é são separadamente ordem contínuas, como $AR_{m}^{\rho}(A)=\displaystyle\sum_{n=1}^{k}AR_{m}^{\rho}(A_{n})$, segue que $AR_{m}^{\rho}(A)$ não dependem de $\rho$ e é separadamente ordem contínua.
\end{proof}

Since scalar-valued Riesz multimorphisms are multiplicative {\cite[Theorem 6]{kus}, Proposition  \ref{propo3}  yields the following.

\begin{corollary} All Arens extensions of a finite sum of scalar-valued Riesz multimorphisms coincide and are separately order continuous.
\end{corollary}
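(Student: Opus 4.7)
The plan is to treat this corollary as a direct specialization of Proposition \ref{propo3} to the case $\mathcal{A} = \mathbb{R}$. First I would observe that $\mathbb{R}$, equipped with ordinary multiplication, is a (totally ordered, hence Archimedean) $f$-algebra, so the hypotheses of Proposition \ref{propo3} on the codomain are automatically satisfied whenever our operators are scalar-valued.

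Next I would invoke \cite[Theorem 6]{kus} to rewrite each scalar-valued Riesz multimorphism $A \colon E_1 \times \cdots \times E_m \longrightarrow \mathbb{R}$ in factored form
\[
A(x_1,\ldots,x_m) = T_1(x_1)\cdots T_m(x_m),
\]
with regular linear functionals $T_i \in E_i^\sim$. This is exactly the definition of \emph{multiplicative} given just before Proposition \ref{propo3}, specialized to $\mathcal{A} = \mathbb{R}$. Consequently, any finite sum of scalar-valued Riesz multimorphisms is a finite sum of multiplicative $m$-linear operators into the Archimedean $f$-algebra $\mathbb{R}$.

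Applying Proposition \ref{propo3} then yields both conclusions simultaneously: all Arens extensions $AR_m^\rho$, $\rho \in S_m$, of such a finite sum coincide, and they are separately order continuous on $E_1^{\sim\sim} \times \cdots \times E_m^{\sim\sim}$. There is no real obstacle here; the entire argument is an identification step based on \cite[Theorem 6]{kus} followed by citation of Proposition \ref{propo3}, so the corollary reads off in one or two lines.
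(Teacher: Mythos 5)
Your proposal is correct and coincides with the paper's own argument: the corollary is stated immediately after Proposition \ref{propo3} with precisely the observation that scalar-valued Riesz multimorphisms are multiplicative by \cite[Theorem 6]{kus}, so that a finite sum of them is a finite sum of multiplicative operators into the Archimedean $f$-algebra $\mathbb{R}$ and Proposition \ref{propo3} applies. Nothing is missing.
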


An operator $A \in {\cal L}_r(E_1, \ldots, E_m;F)$ is of {\it finite type} if there are $n\in\mathbb{N}$, functionals $\varphi_{j}^{i}\in E_{i}^{\sim}$ and vectors $y_{j}\in F$, $j=1,\ldots,n$, $i=1,\ldots,m$, such that
$$A(x_{1},\ldots,x_{m})=\sum_{j=1}^{n} \varphi_{j}^{1}(x_{1})\cdots  \varphi_{j}^{m}(x_{m})y_{j} \text{ for all } x_{i}\in E_{i}, i=1,\ldots,m.$$

\begin{corollary}  All Arens extensions of a multilinear operator of finite type coincide, are of finite type and are separately order continuous.
\end{corollary}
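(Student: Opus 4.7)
The plan is to reduce the finite-type case to Proposition \ref{propo3} by peeling the target vector off each summand and applying the scalar-valued case for the $f$-algebra $\mathbb{R}$. Writing a general finite-type operator as $A=\sum_{k=1}^n B_k$ with $B_k(x_1,\ldots,x_m)=\varphi_k^1(x_1)\cdots\varphi_k^m(x_m)\,y_k$, the linearity of $A\mapsto AR_m^\rho(A)$ reduces matters to a single summand $B(x_1,\ldots,x_m)=\varphi_1(x_1)\cdots\varphi_m(x_m)\,y$, where $\varphi_i\in E_i^\sim$ and $y\in F$. For every $y'\in F^\sim$, the composition $y'\circ B\colon E_1\times\cdots\times E_m\longrightarrow\mathbb{R}$ equals $(y'(y)\varphi_1)\cdot\varphi_2\cdots\varphi_m$, which is a multiplicative operator into the Archimedean $f$-algebra $\mathbb{R}$, so Proposition \ref{propo3} applies to it.

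Unfolding the definition of the Arens extension, for every $\rho\in S_m$ one has
$$AR_m^\rho(B)(x_1'',\ldots,x_m'')(y')=(\overline{x_{\rho(m)}''}^\rho\circ\cdots\circ\overline{x_{\rho(1)}''}^\rho)(y'\circ B)=AR_m^\rho(y'\circ B)(x_1'',\ldots,x_m''),$$
where the second equality identifies $\mathbb{R}^{\sim\sim}$ with $\mathbb{R}$. Applying Proposition \ref{propo3} with $\mathcal{A}=\mathbb{R}$, $T_1=y'(y)\varphi_1$ and $T_i=\varphi_i$ for $i\geq 2$ (so that $T_i''(x_i'')=x_i''(\varphi_i)$ after the identification), the formula therein collapses, via commutativity of $\odot$ on $\mathbb{R}$, to $AR_m^\rho(y'\circ B)(x_1'',\ldots,x_m'')=y'(y)\,x_1''(\varphi_1)\cdots x_m''(\varphi_m)$. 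Consequently
$$AR_m^\rho(B)(x_1'',\ldots,x_m'')=x_1''(\varphi_1)\cdots x_m''(\varphi_m)\,J_F(y),$$
an expression independent of $\rho$. Summing over $k$ yields
$$AR_m^\rho(A)(x_1'',\ldots,x_m'')=\sum_{k=1}^n J_{E_1^\sim}(\varphi_k^1)(x_1'')\cdots J_{E_m^\sim}(\varphi_k^m)(x_m'')\,J_F(y_k),$$
which shows that all Arens extensions of $A$ coincide and that the common extension is visibly of finite type, with defining functionals $J_{E_i^\sim}(\varphi_k^i)\in (E_i^{\sim\sim})^\sim$ and defining vectors $J_F(y_k)\in F^{\sim\sim}$.

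For separate order continuity, fix $j\in\{1,\ldots,m\}$ and $x_i''\in E_i^{\sim\sim}$ for $i\neq j$. The partial map $x_j''\mapsto AR_m^\rho(A)(x_1'',\ldots,x_m'')$ is a finite linear combination, with coefficients in $F^{\sim\sim}$, of the scalar functionals $x_j''\mapsto J_{E_j^\sim}(\varphi_k^j)(x_j'')=x_j''(\varphi_k^j)$. It therefore suffices to invoke the classical fact that, for any Riesz space $X$, $J_X(X)\subseteq(X^\sim)_n^\sim$: applied to $X=E_j^\sim$, this yields $J_{E_j^\sim}(\varphi_k^j)\in(E_j^{\sim\sim})_n^\sim$, and order continuity follows. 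The only subtlety in the whole argument is the interchange $AR_m^\rho(B)(\cdot)(y')=AR_m^\rho(y'\circ B)(\cdot)$, but this is transparent from the definition of the Arens extension in terms of the operators $\overline{x''}^\rho$, so no genuine obstacle arises.
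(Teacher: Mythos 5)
Your proof is correct and follows essentially the same route as the paper: decompose the finite-type operator into rank-one summands, apply Proposition \ref{propo3} (and the explicit product formula from its proof) to the scalar multiplicative forms, and deduce the closed form $AR_m^\rho(A)(x_1'',\ldots,x_m'')=\sum_k x_1''(\varphi_k^1)\cdots x_m''(\varphi_k^m)\,J_F(y_k)$, from which coincidence, finite type and separate order continuity all follow. The only cosmetic difference is that you finish via the classical inclusion $J_{E_j^\sim}(E_j^\sim)\subseteq (E_j^{\sim\sim})_n^\sim$ rather than by quoting the separate order continuity already established in Proposition \ref{propo3}, which is an equally valid (and slightly more self-contained) way to close the argument.
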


\begin{proof} It is not difficult to check that if $A \in {\cal L}_r(E_1, \ldots, E_m)$ is separately order continuous and $y \in F$, then the operator
$$(x_1, \ldots, x_m) \in E_1 \times \cdots \times E_m \mapsto A(x_1, \ldots, x_m)y \in F, $$
is separately order continuous as well. Now the result follows from Proposition \ref{propo3} and from its proof.
\end{proof}

To proceed to the main results of the section we need some preparation.

\begin{lemma} \label{le2}
Let $E_{1},\ldots,E_{m}, F$ be Riesz spaces with $F$ Dedekind complete and $(B_{\alpha})_\alpha$ be a net in $\mathcal{L}_{r}(E_{1},\ldots,E_{m}; F)$. Then $B_{\alpha}\downarrow 0$ if and only if $B_{\alpha}(x_{1},\ldots,x_{m})\downarrow 0$ in $F$ for all $x_{1}\in E_{1}^{+},\ldots, x_{m}\in E_{m}^{+}$.
\end{lemma}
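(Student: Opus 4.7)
The plan is to prove both implications directly, exploiting the fact that since $F$ is Dedekind complete, the order on $\mathcal{L}_{r}(E_{1},\ldots,E_{m};F)$ is characterized pointwise on positive tuples: a regular $m$-linear operator $C$ is positive if and only if $C(x_{1},\ldots,x_{m})\geq 0$ for all $x_{i}\in E_{i}^{+}$. The ($\Leftarrow$) direction is routine: if $B_{\alpha}(x_{1},\ldots,x_{m})\downarrow 0$ in $F$ on every positive tuple, then each such value is $\geq 0$ and for $\alpha\leq\beta$ the difference $(B_{\alpha}-B_{\beta})(x_{1},\ldots,x_{m})\geq 0$, so by the pointwise characterization each $B_{\alpha}\geq 0$ and $(B_{\alpha})_{\alpha}$ decreases in $\mathcal{L}_{r}$. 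If $C\leq B_{\alpha}$ in $\mathcal{L}_{r}$ for every $\alpha$, then $C(x_{1},\ldots,x_{m})\leq \inf_{\alpha}B_{\alpha}(x_{1},\ldots,x_{m})=0$ on positive tuples, giving $C\leq 0$; hence $B_{\alpha}\downarrow 0$ in $\mathcal{L}_{r}$.

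For the ($\Rightarrow$) direction, assume $B_{\alpha}\downarrow 0$ in $\mathcal{L}_{r}$. Pointwise translation of the order shows that $(B_{\alpha}(x_{1},\ldots,x_{m}))_{\alpha}$ is decreasing and bounded below by $0$ on every positive tuple, so by Dedekind completeness of $F$ it has an infimum $S(x_{1},\ldots,x_{m})\geq 0$. The crucial step is to show that $S\colon E_{1}^{+}\times\cdots\times E_{m}^{+}\to F$ is additive and positively homogeneous in each variable, which then allows me to invoke the multilinear Kantorovich-type extension theorem to produce a regular $m$-linear operator $\widetilde{S}\in\mathcal{L}_{r}(E_{1},\ldots,E_{m};F)$ extending $S$. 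Additivity in (say) the first variable is the pivotal calculation: for $x_{1},x_{1}'\in E_{1}^{+}$ and $x_{j}\in E_{j}^{+}$, multilinearity of each $B_{\gamma}$ gives
\begin{equation*}
S(x_{1}+x_{1}',x_{2},\ldots,x_{m})=\inf_{\gamma}\bigl[B_{\gamma}(x_{1},x_{2},\ldots,x_{m})+B_{\gamma}(x_{1}',x_{2},\ldots,x_{m})\bigr];
\end{equation*}
the inequality $\geq S(x_{1},\ldots)+S(x_{1}',x_{2},\ldots)$ is automatic, while the reverse follows by using that for any $\alpha,\beta$ directedness combined with the decreasing property supplies a $\gamma$ with $B_{\gamma}\leq B_{\alpha}\wedge B_{\beta}$, so the bracket is dominated by $B_{\alpha}(x_{1},\ldots)+B_{\beta}(x_{1}',\ldots)$; taking $\inf_{\alpha}$ and then $\inf_{\beta}$ yields $S(x_{1},\ldots)+S(x_{1}',x_{2},\ldots)$. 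Positive homogeneity is immediate.

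Once $\widetilde{S}$ is in hand, one has $\widetilde{S}(x_{1},\ldots,x_{m})=S(x_{1},\ldots,x_{m})\leq B_{\alpha}(x_{1},\ldots,x_{m})$ on every positive tuple, so $\widetilde{S}\leq B_{\alpha}$ in $\mathcal{L}_{r}$ by the pointwise characterization; since $B_{\alpha}\downarrow 0$ in $\mathcal{L}_{r}$, one concludes $\widetilde{S}\leq 0$. But $\widetilde{S}\geq 0$ on positive tuples (as $S\geq 0$ there), so $\widetilde{S}=0$ on $E_{1}^{+}\times\cdots\times E_{m}^{+}$, i.e.\ $S\equiv 0$, which is the desired conclusion. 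The main obstacle is producing the extension $\widetilde{S}$ from the pointwise infimum $S$, which hinges on the additivity argument above plus a multilinear Kantorovich-type extension. An alternative route would be induction on $m$ via the currying Riesz isomorphism $\mathcal{L}_{r}(E_{1},\ldots,E_{m};F)\cong\mathcal{L}_{r}(E_{1};\mathcal{L}_{r}(E_{2},\ldots,E_{m};F))$ (valid since $\mathcal{L}_{r}(E_{2},\ldots,E_{m};F)$ is Dedekind complete), reducing everything to the classical linear statement \cite[Theorem~1.18]{positiveoperators}; but the direct argument above keeps the proof self-contained.
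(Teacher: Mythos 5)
Your proof is correct, and the main implication is argued by a genuinely different route from the paper's. For $(\Rightarrow)$ the paper proceeds by induction on $m$: it transports the net through the currying Riesz isomorphism $\mathcal{L}_{r}(E_{1},\ldots,E_{n+1};F)\cong\mathcal{L}_{r}(E_{1};\mathcal{L}_{r}(E_{2},\ldots,E_{n+1};F))$, checks that $\psi(B_\alpha)\downarrow 0$ there, applies the linear case (Riesz--Kantorovich, \cite[Theorem 1.18]{positiveoperators}) to get $\psi(B_\alpha)(x_1)\downarrow 0$, and finishes with the induction hypothesis --- exactly the ``alternative route'' you sketch in your last sentence. You instead argue directly: form the pointwise infimum $S$ on the positive cones (using Dedekind completeness of $F$), prove separate additivity via the directedness/decreasing trick and the fact that $\inf(A+c)=\inf A+c$, extend $S$ to a regular operator $\widetilde{S}$ by the multilinear Kantorovich extension theorem, and then squeeze $0\le\widetilde{S}\le\inf_\alpha B_\alpha=0$. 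Both arguments are sound; yours is self-contained in the sense that it does not route through the currying isomorphism or the linear statement, but it imports the multilinear Kantorovich extension as an external ingredient (available, e.g., in \cite{bu, loane}) and makes explicit where the Dedekind completeness of $F$ and the pointwise characterization of positivity enter, whereas the paper's induction is shorter and leans on standard linear machinery. Your treatment of the $(\Leftarrow)$ direction agrees with the paper's (which simply calls it straightforward).
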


\begin{proof} It is straightforward that $B_{\alpha}\downarrow 0$ if $B_{\alpha}(x_{1},\ldots,x_{m})\downarrow 0$ in $F$ for all $x_{1}\in E_{1}^{+},\ldots, x_{m}\in E_{m}^{+}$. %
%$(\Leftarrow)$ Sejam  $x_{1}\in E_{1}^{+},\ldots, x_{m}\in E_{m}^{+}$ tais que $ B_{\alpha}(x_{1},\ldots,x_{m})\downarrow 0$ em $F$, então $0\leq B_{\alpha}$ em $\mathcal{L}_{r}(E_{1},\ldots,E_{m}; F)$.  Suponha que existe um $B$ em $\mathcal{L}_{r}(E_{1},\ldots,E_{m}; F)$ tal que $0<B\leq B_{\alpha}$, para todo $\alpha$, então $0<B(x_{1},\ldots,x_{m})\leq B_{\alpha}(x_{1},\ldots,x_{m})$  logo $B(x_{1},\ldots,x_{m})\leq 0$, isto é, $0<B\leq 0$ o que nos permite concluir que $B_{\alpha}\downarrow 0$.\\
%$(\Rightarrow)$ A demonstração é por induçaõ em
We prove the other implication by induction on $m$. The case $m=1$ follows from the  Riesz-Kantorovich Theorem \cite[Theorem 1.18]{positiveoperators}. Assume that the result holds for $n$ % e vejamos que também  vale para  $n+1$.
and let $(B_{\alpha})_\alpha$ be a net in $\mathcal{L}_{r}(E_{1},\ldots,E_{n+1}; F)$ such that $B_{\alpha}\downarrow 0$. %  Seja $B_{\alpha}\downarrow 0$ em $\mathcal{L}_{r}(E_{1},\ldots,E_{n+1}; F)$. Defina o isomorfismo de
Consider the canonical Riesz isomorphism
$$\psi\colon \mathcal{L}_{r}(E_{1},\ldots,E_{n+1}; F)\longrightarrow \mathcal{L}_{r}(E_{1};\mathcal{L}_{r}(E_{2}\ldots,E_{n+1}; F)).$$
%dado por $\psi(A)(x_{1})(x_{2},\ldots,x_{n+1})=A(x_{1},x_{2},\ldots,x_{n+1})$.
We have $0\leq \psi(B_{\alpha})\downarrow $ because $\psi$ is positive. Let $T \in \mathcal{L}_{r}(E_{1};\mathcal{L}_{r}(E_{2}\ldots,E_{n+1}; F))$ be such that  $0\leq T\leq \psi(B_{\alpha})$ for every $\alpha$. Since $\psi^{-1}$ is positive, $0 \leq \psi^{-1}(T)\leq B_{\alpha}\downarrow 0$, hence $0\leq \psi^{-1}(T)\leq 0$, which proves that % denovo como $\psi^{-1}$ é positivo e bijetor  $0< T\leq 0$, o que nos permite concluir que
$\psi(B_{\alpha})\downarrow 0$ in $\mathcal{L}_{r}(E_{1};\mathcal{L}_{r}(E_{2}\ldots,E_{n+1}; F))$. The linear case of the result gives that $\psi(B_{\alpha})(x_{1})\downarrow 0$ in $\mathcal{L}_{r}(E_{2}\ldots,E_{n+1}; F)$ for every $x_{1}\in E_{1}^{+}$. The induction hypothesis gives that, regardless of the $x_{2}\in E_{2}^{+},\ldots,x_{n+1}\in E_{n+1}^{+}$,
$$B_{\alpha}(x_{1},x_{2},\ldots,x_{n+1})=\psi(B_{\alpha})(x_{1})(x_{2},\ldots,x_{n+1})\downarrow 0,$$
completing the proof.
\end{proof}

%The operators $\overline{x_{\rho(k)}^{\prime\prime}}^{\rho}$  were defined in \ref{defope}.

\begin{lemma}\label{le1}
  Let $E_{1},\ldots,E_{m}$ be Riesz spaces, $\rho\in S_{m}$, $k \in \{1,\ldots,m\}$ and $x_{\rho(k)}^{\prime\prime}\in (E_{\rho(k)}^{\sim})_{n}^{\sim}$. Then the operator
$$\overline{x_{\rho(k)}^{\prime\prime}}^{\rho}\colon \mathcal{L}_{r}(E_{1},\ldots,_{\rho(1)}E,\ldots,_{\rho(k-1)}E,\ldots,E_{m})\longrightarrow \mathcal{L}_{r}(E_{1},\ldots,_{\rho(1)}E,\ldots,_{\rho(k)}E,\ldots,E_{m}),$$
defined in \mbox{\rm(\ref{defope})}, is order continuous.
%$$\overline{x_{\rho(k)}^{\prime\prime}}^{\rho}(A)(x_{1},\ldots,_{\rho(1)}x,\ldots,_{\rho(k)}x,\ldots,x_{m})=x_{\rho(k)}^{\prime\prime}(A(x_{1},\ldots,_{\rho(1)}x,\ldots,_{\rho(k)}x;\bullet;\ldots,x_{m}))$$  são ordem contínuo.
\end{lemma}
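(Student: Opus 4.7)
The plan is to reduce to the case $x_{\rho(k)}'' \geq 0$ and then apply Lemma \ref{le2} twice, exploiting the fact that substituting positive values into every coordinate other than $\rho(k)$ turns a downward net of multilinear forms into a downward net of functionals in $E_{\rho(k)}^\sim$, on which the order continuous $x_{\rho(k)}''$ can then act.

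First I would reduce to positive $x_{\rho(k)}''$. The correspondence $x'' \mapsto \overline{x''}^\rho$ is evidently linear in $x''$ (from (\ref{defope}), since only $x_{\rho(k)}''$ is applied to the fixed functional $A(x_1,\ldots,\,_{\rho(1)}x,\ldots,\,_{\rho(k)}x;\bullet\,;\ldots,x_m)$), and by \cite[Theorem 1.56]{positiveoperators} both $(x_{\rho(k)}'')^+$ and $(x_{\rho(k)}'')^-$ inherit order continuity, so it is enough to treat the case $0\leq x_{\rho(k)}''$. Under this assumption $\overline{x_{\rho(k)}''}^\rho$ is itself positive \cite[Proposition 2.1]{lg}, and between the Dedekind complete spaces in play \cite[Theorem 1.56]{positiveoperators} reduces order continuity of $\overline{x_{\rho(k)}''}^\rho$ to the implication $B_\alpha \downarrow 0 \Rightarrow \overline{x_{\rho(k)}''}^\rho(B_\alpha) \downarrow 0$.

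Fix such a net $(B_\alpha)$. By Lemma \ref{le2} the goal becomes to show, for arbitrary $x_r \in E_r^+$ with $r\notin\{\rho(1),\ldots,\rho(k)\}$, that
$$\overline{x_{\rho(k)}''}^\rho(B_\alpha)(x_1,\ldots,\,_{\rho(1)}x,\ldots,\,_{\rho(k)}x,\ldots,x_m)\downarrow 0.$$
Fixing these positive coordinates, the substitution map
$$\pi\colon B \longmapsto B(x_1,\ldots,\,_{\rho(1)}x,\ldots,\,_{\rho(k)}x;\bullet\,;\ldots,x_m)\in E_{\rho(k)}^\sim$$
is a positive linear operator into the Dedekind complete space $E_{\rho(k)}^\sim$, so the net $\pi(B_\alpha)$ is decreasing; moreover, for every $x_{\rho(k)}\in E_{\rho(k)}^+$, (\ref{defpunto}) and Lemma \ref{le2} give $\pi(B_\alpha)(x_{\rho(k)}) = B_\alpha(x_1,\ldots,\,_{\rho(1)}x,\ldots,\,_{\rho(k-1)}x,\ldots,x_m) \downarrow 0$. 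Another application of Lemma \ref{le2} (in the $m=1$ case) then yields $\pi(B_\alpha)\downarrow 0$ in $E_{\rho(k)}^\sim$, and the order continuity of the positive functional $x_{\rho(k)}''$ gives $x_{\rho(k)}''(\pi(B_\alpha))\downarrow 0$, which by (\ref{defope}) is exactly the quantity to be controlled.

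The argument is essentially bookkeeping, and the only place one can slip is in tracking the ``removed coordinate'' notation: one has to verify carefully that after the substitutions $x_r$ ($r\notin\{\rho(1),\ldots,\rho(k)\}$) the form $B$ collapses to a functional precisely on $E_{\rho(k)}$, so that the intermediate space in which the first decreasing conclusion takes place is $E_{\rho(k)}^\sim$, where the hypothesis $x_{\rho(k)}''\in(E_{\rho(k)}^\sim)_n^\sim$ is available. Once the notation is lined up there is no substantive obstacle.
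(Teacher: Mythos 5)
Your proposal is correct and follows essentially the same route as the paper: both arguments hinge on the double application of Lemma \ref{le2} (once to pass from $B_\alpha\downarrow 0$ to the decrease of the slice functionals in $E_{\rho(k)}^{\sim}$, and once to return from pointwise decrease to decrease of the image net), with the order continuity of the positive part of $x_{\rho(k)}''$ applied in between. The only cosmetic difference is that you reduce to positive $x_{\rho(k)}''$ via the decomposition $x''=(x'')^{+}-(x'')^{-}$ and the linearity of $x''\mapsto\overline{x''}^{\rho}$, whereas the paper keeps $x''$ general and instead uses the domination $\big|\overline{x_{\rho(k)}''}^{\rho}\big|\leq\overline{|x_{\rho(k)}''|}^{\rho}$ together with a dominating net $B_\alpha\downarrow 0$ for the order null net; both reductions are standard and equivalent here.
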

\begin{proof}
%Por \cite[Proposição 2.1]{lg} cada operador
We already know that $\overline{x_{\rho(k)}^{\prime\prime}}^{\rho}$ is
a regular linear operator and $\big|\overline{x_{\rho(k)}^{\prime\prime}}^{\rho} \big|\leq \overline{|x_{\rho(k)}^{\prime\prime}|}^{\rho}$. Let $(A_{\alpha})_{\alpha\in\Omega}$ be a net in  $\mathcal{L}_{r}(E_{1},\ldots,_{\rho(1)}E,\ldots,_{\rho(k-1)}E,\ldots,E_{m})$ such that $A_{\alpha}\xrightarrow{\,\, o\,\, } 0$. Then there are  a net $(B_{\alpha})_{\alpha\in\Omega}$ in $\mathcal{L}_{r}(E_{1},\ldots,_{\rho(1)}E,\ldots,_{\rho(k-1)}E,\ldots,E_{m})$ and $\alpha_{0}\in\Omega$ such that $B_{\alpha}\downarrow 0$ and $|A_{\alpha}|\leq B_{\alpha}$ for every $\alpha\geq \alpha_{0}$. Thus, $$\big|\overline{x_{\rho(k)}^{\prime\prime}}^{\rho}(A_{\alpha})\big|\leq \big|\overline{x_{\rho(k)}^{\prime\prime}}^{\rho}\big|(|A_{\alpha}|)\leq \overline{|x_{\rho(k)}^{\prime\prime}|}^{\rho}(|A_{\alpha}|)\leq \overline{|x_{\rho(k)}^{\prime\prime}|}^{\rho}(B_{\alpha}) \text{ for every } \alpha\geq \alpha_{0}.$$
For $x_{i}\in E_{i}^{+}$, $i\in\{1,\ldots,m\}\setminus\{\rho(1),\ldots,\rho(k)\}$, Lemma \ref{le2} gives
$$B_{\alpha}(x_{1},\ldots,_{\rho(1)}x,\ldots,_{\rho(k)}x;\bullet;\ldots,x_{m})\downarrow 0.$$
Since $x_{\rho(k)}^{\prime\prime}$ is order continuous,  $|x_{\rho(k)}^{\prime\prime}|$ is a positive order continuous operator  \cite[Theorem 1.56]{positiveoperators}, so $|x_{\rho(k)}^{\prime\prime}|(B_{\alpha}(x_{1},\ldots,_{\rho(1)}x,\ldots,_{\rho(k)}x;\bullet;\ldots,x_{m}))\downarrow 0$, that is,
$$\overline{|x_{\rho(k)}^{\prime\prime}|}^{\rho}(B_{\alpha})(x_{1},\ldots,_{\rho(1)}x,\ldots,_{\rho(k)}x,\ldots,x_{m})\downarrow 0.$$
Calling on Lemma \ref{le2} once again it follows that $ \overline{|x_{\rho(k)}^{\prime\prime}|}^{\rho}(B_{\alpha})\downarrow 0$, proving that $\overline{x_{\rho(k)}^{\prime\prime}}^{\rho}$ is order continuous.
\end{proof}

\begin{theorem}\label{quasesoc}
 Let $E_{1},\ldots,E_{m}, F$ be Riesz spaces, $\rho\in S_{m}$ and $A\in\mathcal{L}_{r}(E_{1},\ldots,E_{m};F)$.
\begin{enumerate}
\item[\rm (a)] For all $j\in \{1,\ldots,m\}$, $x_{\rho(i)}^{\prime\prime}\in E_{\rho(i)}^{\sim\sim}, i=1,\ldots,j-1$, and  $x_{\rho(i)}^{\prime\prime}\in (E_{\rho(i)}^{\sim})_{n}^{\sim}, i=j+1,\ldots,m$, the operator
    \begin{equation}\label{eqiou}x''_{\rho(j)} \in E_{\rho(j)}^{\sim\sim} \mapsto AR_{m}^{\rho}(A)(x_{1}^{\prime\prime},\ldots,x_{{\rho(j)}}^{\prime\prime},\ldots,x_{m}^{\prime\prime}) \in F^{\sim\sim}
    \end{equation}
    is order continuous on $E_{\rho(j)}^{\sim\sim}$.
        %então  $AR_{m}^{\rho}(A)_{x_{\rho(1)}^{\prime\prime},\ldots,x_{\rho(j-1)}^{\prime\prime},x_{\rho(j+1)}^{\prime\prime},\ldots,x_{\rho(m)}^{\prime\prime}}\colon E_{\rho(j)}^{\sim\sim}\longrightarrow F^{\sim\sim}$ é ordem contínuo.
\item[\rm (b)]  $AR_{m}^{\rho}(A)$ is separately order continuous on $(E_{1}^{\sim})_{n}^{\sim}\times\cdots\times (E_{m}^{\sim})_{n}^{\sim}$.
\item[\rm (c)] $AR_{m}^{\rho}(A)$ is order continuous in the $\rho(m)$-th variable on the whole of $E_{\rho(m)}^{\sim\sim}$.
\end{enumerate}
\end{theorem}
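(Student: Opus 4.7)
The bulk of the theorem is part (a); parts (b) and (c) will follow as immediate specializations. For (a), fix the $x_{\rho(i)}''$ with $i\ne j$ as in the statement and define
$$L\colon E_{\rho(j)}^{\sim\sim}\longrightarrow F^{\sim\sim},\qquad L(x'')=AR_m^\rho(A)(x_1'',\ldots,x'',\ldots,x_m''),$$
with $x''$ placed in position $\rho(j)$; the goal is to show that $L$ is order continuous. The first step will be a reduction to the positive setting: writing $A=A^+-A^-$ and using the linearity of the correspondence $A\mapsto AR_m^\rho(A)$ together with the multilinearity of $AR_m^\rho(A)$ in its vector arguments (after splitting each $x_{\rho(i)}''$, $i\ne j$, into positive and negative parts), $L$ is exhibited as a finite signed combination of operators of the same shape in which $A\ge 0$ and every fixed $x_{\rho(i)}''$ is positive. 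Since $(E_{\rho(i)}^\sim)_n^\sim$ is a band in $E_{\rho(i)}^{\sim\sim}$, the order continuity hypothesis for $i>j$ is preserved under taking positive and negative parts, so it is enough to treat this positive case, in which $L$ itself becomes positive.

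For a positive operator, order continuity reduces to proving $L(x_\alpha'')\downarrow 0$ whenever $x_\alpha''\downarrow 0$. Using that $F^{\sim\sim}$ is Dedekind complete and $L(x_\alpha'')\ge 0$, this will in turn reduce to $L(x_\alpha'')(y')\downarrow 0$ for every $y'\in (F^\sim)^+$. Fix such a $y'$ and, from the description of the Arens extension, isolate the two $x''$-independent blocks
$$C:=\overline{x_{\rho(j-1)}''}^{\rho}\circ\cdots\circ\overline{x_{\rho(1)}''}^{\rho}(y'\circ A),\qquad S:=\overline{x_{\rho(m)}''}^{\rho}\circ\cdots\circ\overline{x_{\rho(j+1)}''}^{\rho},$$
together with the $x''$-linear middle map $T(x''):=\overline{x''}^{\rho}(C)$, so that $L(x'')(y')=S(T(x''))$. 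All the factors defining $C$ are positive operators applied to the positive functional $y'\circ A$, so $C\ge 0$, and $S$ is a composition of positive operators.

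The heart of the matter is to verify $T(x_\alpha'')\downarrow 0$. Evaluating at any positive tuple of the remaining variables yields $T(x_\alpha'')(x_1,\ldots)=x_\alpha''\!\left(C(x_1,\ldots;\bullet;\ldots)\right)$, the action of the decreasing net $x_\alpha''\downarrow 0$ on a fixed positive element of $E_{\rho(j)}^\sim$; a routine duality argument (the map $u\mapsto\inf_\alpha x_\alpha''(u)$ on $(E_{\rho(j)}^\sim)^+$ extends additively to a positive element of $E_{\rho(j)}^{\sim\sim}$ that is dominated by every $x_\alpha''$ and hence by $0$) gives $x_\alpha''(u)\downarrow 0$ for every $u\in(E_{\rho(j)}^\sim)^+$. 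Lemma \ref{le2} then upgrades this pointwise decrease on positive tuples to $T(x_\alpha'')\downarrow 0$ in the corresponding space of regular multilinear functionals. Since each $x_{\rho(i)}''$ with $i>j$ lies in $(E_{\rho(i)}^\sim)_n^\sim$, Lemma \ref{le1} makes every factor of $S$ order continuous, whence $S$ is order continuous and $S(T(x_\alpha''))\downarrow 0$, finishing (a).

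Part (b) is then obtained by applying (a) with $j$ chosen so that $\rho(j)$ is the varying coordinate, since every other $x_{\rho(i)}''$ already lies in the order-continuous subspace. Part (c) is the special case $j=m$ of (a), in which the hypothesis $x_{\rho(i)}''\in(E_{\rho(i)}^\sim)_n^\sim$ for $i>j$ is vacuous. The main obstacle is the bookkeeping in the reduction to the positive setting and the correct isolation of the decomposition $L(x'')(y')=S(T(x''))$; once that structure is in hand, Lemma \ref{le1} handles the outer block $S$ and Lemma \ref{le2} handles the inner block $T$.
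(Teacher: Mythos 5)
Your argument is correct and follows essentially the same route as the paper: the same factorization of $AR_{m}^{\rho}(A)(\cdot)(y')$ into an inner positive block $C$, the middle map $x''\mapsto\overline{x''}^{\rho}(C)$, and an outer block made order continuous by Lemma \ref{le1}, with Lemma \ref{le2} converting pointwise decrease on positive tuples into decrease in the operator order. The only differences are cosmetic — the paper reduces to positivity by dominating $|AR_{m}^{\rho}(A)(\ldots,x_{\alpha}'',\ldots)|$ by $AR_{m}^{\rho}(A_{1}+A_{2})$ evaluated at moduli and at a dominating net $z_{\alpha}''\downarrow 0$, rather than by splitting $L$ into a signed sum of positive pieces — except for one small slip you should fix: write $A=A_{1}-A_{2}$ with $A_{1},A_{2}\ge 0$ instead of $A=A^{+}-A^{-}$, since $\mathcal{L}_{r}(E_{1},\ldots,E_{m};F)$ need not be a Riesz space when $F$ is not Dedekind complete, so the modulus of $A$ may not exist (the decomposition into positive operators is all the argument actually uses).
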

\begin{proof} It is plain that (b) and (c) follow from (a) (for (c) just take $j = m$ in (a)). To prove (a),
%Vamos mostrar somente o item $1$, pois $2$ e $3$ segue do $1$, este último tomando $j=m$.
 take $j\in \{1,\ldots,m\}$, $x_{\rho(i)}^{\prime\prime}\in E_{\rho(i)}^{\sim\sim}, i=1,\ldots,j-1$,  and $x_{\rho(i)}^{\prime\prime}\in (E_{\rho(i)}^{\sim})_{n}^{\sim}, i=j+1,\ldots,m$. Given a net  $(x_{\alpha_{\rho(j)}}^{\prime\prime})_{\alpha_{\rho(j)}\in\Omega_{\rho(j)}}$ in $E_{\rho(j)}^{\sim\sim}$ such that $x_{\alpha_{\rho(j)}}^{\prime\prime} \xrightarrow{\,\, o \,\, } 0$, there are a net $(z_{\alpha_{\rho(j)}}^{\prime\prime})_{\alpha_{\rho(j)}\in\Omega_{\rho(j)}}$ in $E_{\rho(j)}^{\sim\sim}$ and $\alpha_{\rho(j)_{0}}$ such that $z_{\alpha_{\rho(j)}}^{\prime\prime}\downarrow 0$ and $|x_{\alpha_{\rho(j)}}^{\prime\prime}|\leq z_{\alpha_{\rho(j)}}^{\prime\prime}$ for every $\alpha_{\rho(j)}\geq\alpha_{\rho(j)_{0}}$. Let  $A_{1}, A_{2}\in\mathcal{L}_{r}(E_{1},\ldots,E_{m};F)$ be positive operators such that $A=A_{1}-A_{2}$ and put $B:=A_{1}+A_{2}$. Of course $B$ is positive. Denoting the operator in (\ref{eqiou}) by $AR_{m}^{\rho}(A)_{x_{\rho(1)}^{\prime\prime},\ldots,x_{\rho(j-1)}^{\prime\prime},x_{\rho(j+1)}^{\prime\prime},\ldots,x_{\rho(m)}^{\prime\prime}}$, for every $\alpha_{\rho(j)}\geq\alpha_{\rho(j)_{0}}$,
\begin{align*}
|AR_{m}^{\rho}(A)_{x_{\rho(1)}^{\prime\prime},\ldots,x_{\rho(j-1)}^{\prime\prime},x_{\rho(j+1)}^{\prime\prime},\ldots,x_{\rho(m)}^{\prime\prime}}&(x_{\alpha_{\rho(j)}}^{\prime\prime})|
=|AR_{m}^{\rho}(A)(x_{1}^{\prime\prime},\ldots,x_{\alpha_{\rho(j)}}^{\prime\prime},\ldots,x_{m}^{\prime\prime})|\\
&\leq |AR_{m}^{\rho}(A)|(|x_{1}^{\prime\prime}|,\ldots,|x_{\alpha_{\rho(j)}}^{\prime\prime}|,\ldots,|x_{m}^{\prime\prime}|)\\
&= |AR_{m}^{\rho}(A_{1}-A_{2})|(|x_{1}^{\prime\prime}|,\ldots,|x_{\alpha_{\rho(j)}}^{\prime\prime}|,\ldots,|x_{m}^{\prime\prime}|)\\
&= |AR_{m}^{\rho}(A_{1})-AR_{m}^{\rho}(A_{2})|(|x_{1}^{\prime\prime}|,\ldots,|x_{\alpha_{\rho(j)}}^{\prime\prime}|,\ldots,|x_{m}^{\prime\prime}|)\\
&\leq \big(AR_{m}^{\rho}(A_{1})+AR_{m}^{\rho}(A_{2}) \big) (|x_{1}^{\prime\prime}|,\ldots,|x_{\alpha_{\rho(j)}}^{\prime\prime}|,\ldots,|x_{m}^{\prime\prime}|)\\
&= AR_{m}^{\rho}(B)(|x_{1}^{\prime\prime}|,\ldots,|x_{\alpha_{\rho(j)}}^{\prime\prime}|,\ldots,|x_{m}^{\prime\prime}|)\\
&=AR_{m}^{\rho}(B)_{|x_{\rho(1)}^{\prime\prime}|,\ldots,|x_{\rho(j-1)}^{\prime\prime}|,|x_{\rho(j+1)}^{\prime\prime}|,\ldots,|x_{\rho(m)}^{\prime\prime}|}(|x_{\alpha_{\rho(j)}}^{\prime\prime}|)\\
&\leq AR_{m}^{\rho}(B)_{|x_{\rho(1)}^{\prime\prime}|,\ldots,|x_{\rho(j-1)}^{\prime\prime}|,|x_{\rho(j+1)}^{\prime\prime}|,\ldots,|x_{\rho(m)}^{\prime\prime}|}(z_{\alpha_{\rho(j)}}^{\prime\prime}).
\end{align*}
As Arens extensions of positive operators are positive, it holds
$$0\leq AR_{m}^{\rho}(B)_{|x_{\rho(1)}^{\prime\prime}|,\ldots,|x_{\rho(j-1)}^{\prime\prime}|,|x_{\rho(j+1)}^{\prime\prime}|,\ldots,|x_{\rho(m)}^{\prime\prime}|}(z_{\alpha_{\rho(j)}}^{\prime\prime})\downarrow.$$ Calling $T:=\overline{|x_{\rho(m)}^{\prime\prime}|}^{\rho}\circ\cdots\circ \overline{|x_{\rho(j+1)}^{\prime\prime}|}^{\rho}$, since each $|x_{\rho(i)}^{\prime\prime}|, i=j+1,\ldots,m$, is order continuous, by Lemma \ref{le1} it follows that  $\overline{|x_{\rho(i)}^{\prime\prime}|}^{\rho}$ is order continuous, so $T$ is order continuous and positive. On the other hand, it is plain that, for every positive $y^{\prime}\in F^{\sim}$,
$$S:=\big(\overline{|x_{\rho(j-1)}^{\prime\prime}|}^{\rho}\circ \cdots\circ \overline{|x_{\rho(1)}^{\prime\prime}|}^{\rho}\big)(y^{\prime}\circ B)\in \mathcal{L}_{r}(E_{1},\ldots,_{\rho(1)}E,\ldots,_{\rho(j-1)}E,\ldots,E_{m})$$
is positive. From $z_{\alpha_{\rho(j)}}^{\prime\prime}\downarrow 0$ we conclude that $\overline{z_{\alpha_{\rho(j)}}^{\prime\prime}}^{\rho}(S)\downarrow 0$, therefore  $T(\overline{z_{\alpha_{\rho(j)}}^{\prime\prime}}^{\rho}(S))\downarrow 0$. In this fashion, for every positive $y^{\prime}\in F^{\sim}$,
\begin{align*}
AR_{m}^{\rho}(B)&_{|x_{\rho(1)}^{\prime\prime}|,\ldots,|x_{\rho(j-1)}^{\prime\prime}|,|x_{\rho(j+1)}^{\prime\prime}|,\ldots,|x_{\rho(m)}^{\prime\prime}|}(z_{\alpha_{\rho(j)}}^{\prime\prime})(y^{\prime})=AR_{m}^{\rho}(B)(|x_{1}^{\prime\prime}|,\ldots,z_{\alpha_{\rho(j)}}^{\prime\prime},\ldots,|x_{m}^{\prime\prime}|)(y^{\prime})\\
&=\big(\overline{|x_{\rho(m)}^{\prime\prime}|}^{\rho}\circ\cdots\circ \overline{|x_{\rho(j+1)}^{\prime\prime}|}^{\rho}\circ \overline{z_{\alpha_{\rho(j)}}^{\prime\prime}}^{\rho}\circ \overline{|x_{\rho(j-1)}^{\prime\prime}|}^{\rho}\circ \cdots\circ \overline{|x_{\rho(1)}^{\prime\prime}|}^{\rho}\big)(y^{\prime}\circ B)\\
&=\big(T\circ \overline{z_{\alpha_{\rho(j)}}^{\prime\prime}}^{\rho}\circ \overline{|x_{\rho(j-1)}^{\prime\prime}|}^{\rho}\circ \cdots\circ \overline{|x_{\rho(1)}^{\prime\prime}|}^{\rho}\big)(y^{\prime}\circ B)\\
&=T\big(\big(\overline{z_{\alpha_{\rho(j)}}^{\prime\prime}}^{\rho}\circ \overline{|x_{\rho(j-1)}^{\prime\prime}|}^{\rho}\circ \cdots\circ \overline{|x_{\rho(1)}^{\prime\prime}|}^{\rho}\big)(y^{\prime}\circ B)\big)\\
&=T\big(\overline{z_{\alpha_{\rho(j)}}^{\prime\prime}}^{\rho}\big(\big(\overline{|x_{\rho(j-1)}^{\prime\prime}|}^{\rho}\circ \cdots\circ \overline{|x_{\rho(1)}^{\prime\prime}|}^{\rho}\big)(y^{\prime}\circ B)\big)\big)=T(\overline{z_{\alpha_{\rho(j)}}^{\prime\prime}}^{\rho}(S))\downarrow 0.
\end{align*}
Lemma \ref{le2} gives that  $AR_{m}^{\rho}(B)_{|x_{\rho(1)}^{\prime\prime}|,\ldots,|x_{\rho(j-1)}^{\prime\prime}|,|x_{\rho(j+1)}^{\prime\prime}|,\ldots,|x_{\rho(m)}^{\prime\prime}|}(z_{\alpha_{\rho(j)}}^{\prime\prime})\downarrow 0$, and this allows us to conclude that  $AR_{m}^{\rho}(A)_{x_{\rho(1)}^{\prime\prime},\ldots,x_{\rho(j-1)}^{\prime\prime},x_{\rho(j+1)}^{\prime\prime},\ldots,x_{\rho(m)}^{\prime\prime}}$ is order continuous.
\end{proof}

%If the dual $E^{\ast}$ of a Banach lattice $E$ has order continuous norm, then    $E^{\ast\ast}=(E^{\ast})_{n}^{\ast}$ \cite[Theorem 4.2.2]{nieberg}. So, the following is immediate from Theorem  \ref{quasesoc}.

%Por $\theta\in S_{m} $ denotamos a permutação shif para atras, isto é, $\theta(j)=m-j+1, j=1,\ldots,m$

\begin{remark}\label{remm} \rm Theorem \ref{quasesoc} improves \cite[Theorem 1]{ryan1} in the sense that it  holds for all Arens extensions, it holds for operators between Riesz spaces, it drops the assumption of $F$ being Dedekind complete and it assures the order continuity on the whole bidual in one of the variables. And, for regular operators, it improves \cite[Theorem 3.4]{Buskes} by taking into account all Arens extensions and by assuring the order continuity on the whole bidual in one of the variables. In particular,  Theorem \ref{quasesoc}(b) provides an alternative proof of \cite[Theorem 1]{ryan1} and of \cite[Theorem 3.4]{Buskes} for regular operators between Riesz spaces and (c) shows that $A^{\ast[m+1]}=AR_{m}^{\theta}(A)$ is order continuous in the first variable on the whole of $E_1^{**}$.%pois por \cite[Teorema 2.2]{lg}, $A^{\ast(m+1)}=AR_{m}^{\theta}(A)$ e o item $3$ melhora um pouco \cite[Teorema 1]{ryan1}.
\end{remark}

Recall that an $m$-homogeneous polynomial $P \colon E \longrightarrow F$ between Riesz spaces is positive if the corresponding symmetric $m$-linear operator $\check P$ is positive. And $P$ is regular, in symbols $P \in {\cal P}_r(^mE;F)$, if $P$ can be written as the difference of two positive polynomials.

The Arens extensions of a regular polynomial $P \in {\cal P}_r(^mE;F)$ are the polynomials associated to the Arens extensions of $\check P$, that is: for $\rho \in S_m$, the Arens extension of $P$ with respect to $\rho$ is the polynomial
$$AR_m^\rho(P) \colon E^{\sim\sim} \longrightarrow F^{\sim\sim}~,~ AR_m^\rho(P)(x'') =AR_m^\rho(\check P)(x'', \ldots, x''). $$
In \cite[Theorem 3.5]{Buskes} it is proved that $AR_m^\theta(P)$ is order continuous on $(E^\sim)_n^{\sim}$.  We can go a bit further at the origin:

\begin{proposition}\label{respol} All Arens extensions of a polynomial $P \in {\cal P}_r(^mE;F)$ are order continuous at the origin on $E^{\sim\sim}$, meaning that $AR_m^\rho(P)(x_{\alpha}^{\prime\prime})\xrightarrow{\,\, o\,\,} 0$ in $F^{\sim\sim}$ for every $\rho \in S_m$ and any $(x_{\alpha}^{\prime\prime})_{\alpha\in\Omega}$ in $E^{\sim\sim}$ such that $x_{\alpha}^{\prime\prime}\xrightarrow{\,\, o \,\,} 0$ in $E^{\sim\sim}$.
\end{proposition}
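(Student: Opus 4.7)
The plan is to reduce to the positive case and then invoke Theorem~\ref{quasesoc}(c), which supplies order continuity of any Arens extension in its $\rho(m)$-th variable on the whole of $E^{\sim\sim}$. Write the associated symmetric $m$-linear operator as $\check P = A_{1} - A_{2}$ with $A_{1},A_{2}$ regular and positive, and set $B := A_{1}+A_{2}$. Since Arens extensions of positive operators are positive and the correspondence $A \mapsto AR_{m}^{\rho}(A)$ is linear, $AR_{m}^{\rho}(A_{1})$ and $AR_{m}^{\rho}(A_{2})$ are positive multilinear, and so, for every $x'' \in E^{\sim\sim}$,
$$|AR_{m}^{\rho}(P)(x'')| \le AR_{m}^{\rho}(A_{1})(|x''|,\ldots,|x''|)+AR_{m}^{\rho}(A_{2})(|x''|,\ldots,|x''|)=AR_{m}^{\rho}(B)(|x''|,\ldots,|x''|).$$
Given a net $(x_{\alpha}'')_{\alpha\in\Omega}$ with $x_{\alpha}'' \xrightarrow{\,\, o \,\,} 0$, pick $z_{\alpha}'' \downarrow 0$ and $\alpha_{0}\in\Omega$ such that $|x_{\alpha}''| \le z_{\alpha}''$ for every $\alpha \ge \alpha_{0}$. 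Monotonicity of the positive multilinear operator $AR_{m}^{\rho}(B)$ then reduces the task to verifying that the diagonal evaluation $g_{\alpha} := AR_{m}^{\rho}(B)(z_{\alpha}'',\ldots,z_{\alpha}'')$ decreases to $0$ in $F^{\sim\sim}$.

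The apparent obstacle is that we control order continuity of $AR_{m}^{\rho}(B)$ on the entire bidual only in one variable, while all $m$ coordinates of $g_{\alpha}$ vary simultaneously. The trick is to exploit the uniform bound $z_{\alpha}'' \le z_{\alpha_{0}}''$, which is available precisely because $(z_{\alpha}'')$ is decreasing: by positivity of $AR_{m}^{\rho}(B)$,
$$0 \le g_{\alpha} \le AR_{m}^{\rho}(B)(z_{\alpha_{0}}'',\ldots,z_{\alpha_{0}}'',z_{\alpha}'',z_{\alpha_{0}}'',\ldots,z_{\alpha_{0}}''),$$
with $z_{\alpha}''$ placed at the $\rho(m)$-th coordinate. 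By Theorem~\ref{quasesoc}(c) the linear map $x_{\rho(m)}'' \mapsto AR_{m}^{\rho}(B)(z_{\alpha_{0}}'',\ldots,x_{\rho(m)}'',\ldots,z_{\alpha_{0}}'')$ is positive and order continuous on all of $E^{\sim\sim}$; applied to $z_{\alpha}'' \downarrow 0$ it therefore produces a net decreasing to $0$ in $F^{\sim\sim}$. Since $(g_{\alpha})$ is itself a decreasing positive net dominated by this majorant, its infimum in the Dedekind complete lattice $F^{\sim\sim}$ must vanish; hence $g_{\alpha} \downarrow 0$, and consequently $AR_{m}^{\rho}(P)(x_{\alpha}'') \xrightarrow{\,\, o \,\,} 0$, as required.
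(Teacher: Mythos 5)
Your proof is correct and follows essentially the same route as the paper's: decompose into positive parts, dominate $|AR_m^\rho(P)(x''_\alpha)|$ by the diagonal of $AR_m^\rho(\check P_1+\check P_2)$ evaluated at the majorizing net, freeze all coordinates except the $\rho(m)$-th at $z''_{\alpha_0}$, and invoke Theorem \ref{quasesoc} for order continuity in that variable. The only cosmetic difference is that you decompose $\check P$ directly rather than passing through a decomposition $P=P_1-P_2$ of the polynomial, which changes nothing in the argument.
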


\begin{proof} Write $P=P_{1}-P_{2}$, where $P_1$ and $P_2$ are positive $m$-homogeneous polynomials, and let % onde $P_{1}, P_{2}\colon E\longrightarrow F$ são polinômios $m$-homogêneos positivos, assim existem aplicações $m$-lineares simétricas positivas
$\check{P}_{1}, \check{P}_{2}\colon E^{m}\longrightarrow F$ be the positive symmetric $m$-linear operators associated to $P_1$ and $P_2$, respectively. % opeattais que para todo  $x\in E$, $P_{i}(x)=A_{i}(x,\ldots,x)$, $i=1,2$  e $\widetilde{P}_{\rho}=\widetilde{P_{1}}_{\rho}-\widetilde{P_{2}}_{\rho}$, onde $\widetilde{P_{i}}_{\rho}$ são as extensões dos polinômio $m$-homogêneo positivo $P_{i}, i=1,2.$.
Let $(x_{\alpha}^{\prime\prime})_{\alpha\in\Omega}$ be a net in $E^{\sim\sim}$ such that $x_{\alpha}^{\prime\prime}\xrightarrow{\,\, o \,\,} 0$. There are a net $(z_{\alpha}^{\prime\prime})_{\alpha\in\Omega}$ in $E^{\sim\sim}$ and $\alpha_{0}\in\Omega$ such that $z_{\alpha}^{\prime\prime}\downarrow 0$ and $|x_{\alpha}^{\prime\prime}|\leq z_{\alpha}^{\prime\prime}$ for every $\alpha\geq \alpha_{0}$. For a permutation $\rho \in S_m$, we know from Theorem \ref{quasesoc} that the operator
$$x'' \in E^{\sim\sim} \mapsto AR_m^{\rho}(\check{P}_1 + \check{P}_2)(z''_{\alpha_0}, \ldots, z''_{\alpha_0}, x'', z''_{\alpha_0}, \ldots, z''_{\alpha_0}), $$
where $x''$ is placed at the $\rho(m)$-th coordinate, is order continuous. For  $\alpha\geq \alpha_{0}$ we have $z_{\alpha}^{\prime\prime}\leq z_{\alpha_{0}}^{\prime\prime}$, so, using that $AR_m^{\rho}(\check{P}_1 + \check{P}_2)$ is positive,
\begin{align*}
|AR_m^\rho(P)(x_{\alpha}^{\prime\prime})|&=|AR_m^\rho(P_1 - P_2)(x_{\alpha}^{\prime\prime})| = |AR_m^\rho((P_1 - P_2)^{\vee})(x_{\alpha}^{\prime\prime}, \ldots , x_{\alpha}^{\prime\prime})| \\& =|AR_{m}^{\rho}(\check{P}_{1}-\check{P}_{2})(x_{\alpha}^{\prime\prime},\ldots,x_{\alpha}^{\prime\prime})|\leq |AR_{m}^{\rho}(\check{P}_{1}-\check{P}_{2})|(|x_{\alpha}^{\prime\prime}|,\ldots,|x_{\alpha}^{\prime\prime}|)\\
&\leq |AR_{m}^{\rho}(\check{P}_{1}-\check{P}_{2})|(z_{\alpha}^{\prime\prime},\ldots,z_{\alpha}^{\prime\prime}) \leq AR_{m}^{\rho}(\check{P}_{1}+\check{P}_{2})(z_{\alpha}^{\prime\prime},\ldots,z_{\alpha}^{\prime\prime}) \\
& \leq AR_{m}^{\rho}(\check{P}_{1}+\check{P}_{2})(z_{\alpha_{0}}^{\prime\prime},\ldots,z_{\alpha_{0}}^{\prime\prime},
z_{\alpha}^{\prime\prime},z_{\alpha_{0}}^{\prime\prime},\ldots,z_{\alpha_{0}}^{\prime\prime})\downarrow 0.%\\
%&=AR_{m}^{\rho}(A_{1}+A_{2})_{(z_{\alpha_{0}}^{\prime\prime})^{m-1}}(z_{\alpha}^{\prime\prime}),
\end{align*}
%This proves that %donde $z_{\alpha}^{\prime\prime}$  esta na $\rho(m)$-ésima coordenada e $\rho(m)\in\{1,\ldots,m\}$. Como $AR_{m}^{\rho}(A_{1}+A_{2})_{(z_{\alpha_{0}}^{\prime\prime})^{m-1}}$ é positivo e $z_{\alpha}^{\prime\prime}\downarrow 0$ então pela Proposição \ref{quasesoc}, segue que $AR_{m}^{\rho}(A_{1}+A_{2})_{(z_{\alpha_{0}}^{\prime\prime})^{m-1}}(z_{\alpha}^{\prime\prime})\downarrow 0$, isto nos permite concluir que
This proves that  $AR_m^\rho(P)(x_{\alpha}^{\prime\prime})\xrightarrow{\,\, o\,\,} 0$.
\end{proof}

In \cite{nakano} it is proved that, for a regular homogeneous polynomial, order continuity at one point does not imply order continuity at every point in general. Anyway, the result above shall be useful later.

\section{Operators between Banach lattices}
In this section we give conditions on the Banach lattices $E_1, \ldots, E_m$ so that, for every Banach lattice $F$, all Arens extensions of any regular $m$-linear operator from $E_1 \times \cdots \times E_m$ to $F$ are separately order continuous on $E_1^{**} \times \cdots \times E_m^{**}$. Consequences on order continuity of extensions of regular homogeneous polynomials shall also be obtained.

 If the dual $E^{\ast}$ of a Banach lattice $E$ has order continuous norm, then    $E^{\ast\ast}=(E^{\ast})_{n}^{\ast}$ \cite[Theorem 2.4.2]{nieberg}. So, the following is immediate from Theorem \ref{quasesoc} .

\begin{corollary}\label{cor1}
Let $E_{1},\ldots, E_{m}, F$ be Banach lattices, $A\in\mathcal{L}_{r}(E_{1},\ldots,E_{m};F)$ and $\rho\in S_{m}$. If $E_{j}^{\ast}$ has order continuous norm  for $j\in\{1,\ldots,m\}$, $j\neq \rho(1)$, then the Arens extension $AR_{m}^{\rho}(A)$ of $A$ is separately order continuous on $E_1^{**} \times \cdots \times E_m^{**}$.
\end{corollary}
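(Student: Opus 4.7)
The plan is to apply Theorem \ref{quasesoc}(a) one variable at a time, exploiting the fact that under the hypothesis ``$E_j^\ast$ has order continuous norm'' the whole bidual $E_j^{\ast\ast}$ collapses onto the order continuous part $(E_j^\ast)_n^\ast$. The theorem gives order continuity in the $\rho(j)$-th variable provided all the arguments placed at positions $\rho(j{+}1),\ldots,\rho(m)$ are picked from $(E_{\rho(i)}^\ast)_n^\ast$ (the arguments at positions $\rho(1),\ldots,\rho(j-1)$ can be arbitrary elements of the full bidual). So the task reduces to checking that, for each variable $k$ we want to handle, the positions $i>j=\rho^{-1}(k)$ avoid $\rho(1)$, so the hypothesis kicks in and the ``order continuous part'' equals the full bidual.

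More precisely, I would fix an arbitrary variable index $k\in\{1,\ldots,m\}$ and set $j:=\rho^{-1}(k)$, so that $\rho(j)=k$. For every $i\in\{j{+}1,\ldots,m\}$ we have $i\ge 2$, hence $\rho(i)\ne\rho(1)$; by the standing hypothesis $E_{\rho(i)}^\ast$ has order continuous norm, and therefore by \cite[Theorem 2.4.2]{nieberg} we get
\[
E_{\rho(i)}^{\ast\ast}=(E_{\rho(i)}^\ast)_n^\ast, \qquad i=j{+}1,\ldots,m.
\]
Thus any choice of $x_{\rho(i)}^{\prime\prime}\in E_{\rho(i)}^{\ast\ast}$, for $i>j$, automatically lies in $(E_{\rho(i)}^\ast)_n^\ast$, while the arguments $x_{\rho(i)}^{\prime\prime}\in E_{\rho(i)}^{\ast\ast}$ for $i<j$ are permitted to be arbitrary. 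This is exactly the setup of Theorem \ref{quasesoc}(a), which then yields that the map
\[
x_k^{\prime\prime}\in E_k^{\ast\ast}\mapsto AR_m^{\rho}(A)(x_1^{\prime\prime},\ldots,x_m^{\prime\prime})\in F^{\ast\ast}
\]
is order continuous on $E_k^{\ast\ast}$. Since $k$ was arbitrary, $AR_m^\rho(A)$ is separately order continuous on the entire product $E_1^{\ast\ast}\times\cdots\times E_m^{\ast\ast}$.

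There is essentially no technical obstacle here: the whole content of the corollary is bookkeeping with the permutation $\rho$, combined with the Nachbin/Meyer-Nieberg identification $E^{\ast\ast}=(E^\ast)_n^\ast$ when $E^\ast$ has order continuous norm. The one subtle point worth highlighting in the write-up is why the exception ``$j\ne\rho(1)$'' in the hypothesis is exactly the right one: it corresponds to the fact that Theorem \ref{quasesoc}(a) allows the first $j-1$ permuted coordinates to range over the full bidual, so we never need any assumption on $E_{\rho(1)}^\ast$, but we do need order continuity of the norm on $E_{\rho(i)}^\ast$ for every $i\ge 2$ in order to cover every possible target variable $k=\rho(j)$ with $j\ge 1$.
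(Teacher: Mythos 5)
Your proof is correct and is exactly the paper's argument: the authors likewise observe that $E^{\ast\ast}=(E^{\ast})_n^{\ast}$ when $E^{\ast}$ has order continuous norm (citing \cite[Theorem 2.4.2]{nieberg}) and then declare the corollary immediate from Theorem \ref{quasesoc}. You have merely written out the permutation bookkeeping that makes the word ``immediate'' precise, including the correct explanation of why only the index $\rho(1)$ is exempt from the hypothesis.
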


The next result makes clear what type of condition should be asked to get order continuity of Arens extensions on the product of the whole of the biduals.

\begin{proposition} Let $m \geq 2$ and $E_{1}, \ldots,E_{m}$ be Banach lattices such that the Arens extension $A^{\ast[m+1]}$ of any form $A \in {\cal L}_r(E_1, \ldots, E_m)$ is separately order continuous  on $E_{1}^{\ast\ast}\times\cdots\times  E_{m}^{\ast\ast}$. Then, for every operator  $T\in\mathcal{L}_{r}(E_{i};E_{j}^{\ast}),\ i,j=1,\ldots,m$, $i\neq j$, the functional $T^{\ast\ast}(x_{i}^{\ast\ast})$ is order continuous on $E_{j}^{\ast\ast}$  for every $x_{i}^{\ast\ast}\in E_{i}^{\ast\ast}$.
%, $T^{\ast\ast}(x_{i}^{\ast\ast})\in E_{j}^{\ast\ast\ast}$,  é ordem contínuo.
\end{proposition}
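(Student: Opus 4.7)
My approach is to construct, for each pair $(i,j)$ with $i\neq j$ and each regular operator $T\colon E_i\to E_j^*$, an explicit regular $m$-linear form $A_T\in\mathcal{L}_r(E_1,\ldots,E_m)$ whose Arens extension $A_T^{*[m+1]}$, after fixing all but the $j$-th variable at appropriately chosen values, reduces to the functional $T^{**}(x_i^{**})$ on $E_j^{**}$. The separate order continuity hypothesis then immediately delivers the desired order continuity of $T^{**}(x_i^{**})$.

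The construction proceeds as follows. For each index $k\in\{1,\ldots,m\}\setminus\{i,j\}$, pick $\varphi_k\in E_k^*$ and $a_k\in E_k$ with $\varphi_k(a_k)=1$, and set
\[
A_T(x_1,\ldots,x_m) = T(x_i)(x_j)\prod_{k\neq i,j}\varphi_k(x_k),
\]
which is manifestly regular. To evaluate $A_T^{*[m+1]}$ I would iterate the operators $\overline{x''_k}^{\theta}$ in the order prescribed by $\theta$, namely $k=m,m-1,\ldots,1$. The operator $\overline{x''_k}^{\theta}$ for $k\notin\{i,j\}$ simply multiplies by the scalar $x''_k(\varphi_k)$, so after peeling these off one is left with the action of $\overline{x''_i}^{\theta}\circ\overline{x''_j}^{\theta}$ (or $\overline{x''_j}^{\theta}\circ\overline{x''_i}^{\theta}$, depending on whether $i<j$ or $i>j$) on the bilinear core $B(x_i,x_j)=T(x_i)(x_j)$.

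In the case $i<j$, $\overline{x''_j}^{\theta}$ is applied first and produces $T^*(x''_j)\in E_i^*$; then $\overline{x''_i}^{\theta}$ yields $x''_i(T^*(x''_j))=T^{**}(x''_i)(x''_j)$. Consequently
\[
A_T^{*[m+1]}(x''_1,\ldots,x''_m) = T^{**}(x''_i)(x''_j)\prod_{k\neq i,j}x''_k(\varphi_k).
\]
Setting $x''_k=J_{E_k}(a_k)$ for each $k\neq i,j$ makes every factor equal to $1$, so the slice $x''_j\mapsto A_T^{*[m+1]}(x''_1,\ldots,x''_m)$ (with $x''_i$ also fixed) coincides exactly with $x''_j\mapsto T^{**}(x''_i)(x''_j)$, and the separate order continuity hypothesis in the $j$-th variable yields the conclusion.

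The case $i>j$ requires a parallel but slightly more delicate treatment: the Arens iteration on the bilinear core now applies $\overline{x''_i}^{\theta}$ first, producing $J_{E_j}^*(T^{**}(x''_i))\in E_j^*$, and then $\overline{x''_j}^{\theta}$ yields $x''_j(J_{E_j}^*(T^{**}(x''_i)))$. Specialising $x''_k$ as before and invoking separate order continuity of $A_T^{*[m+1]}$ in the $j$-th variable gives the order continuity on $E_j^{**}$ of the functional $x''_j\mapsto x''_j(J_{E_j}^*(T^{**}(x''_i)))$. The main obstacle is then to recover order continuity of $T^{**}(x''_i)$ itself from order continuity of its restriction $J_{E_j}^*(T^{**}(x''_i))$ to $J_{E_j}(E_j)$; this should be handled by a symmetric use of the hypothesis, applied to a suitable companion form in $\mathcal{L}_r(E_1,\ldots,E_m)$, that allows one to lift the order continuity from the order-dense subspace $J_{E_j}(E_j)$ to all of $E_j^{**}$.
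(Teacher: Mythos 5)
Your treatment of the case $i<j$ is correct and is essentially the paper's proof: the paper uses the very same product form $A(x_1,\ldots,x_m)=\bigl(\prod_{k\neq i,j}\varphi_k(x_k)\bigr)T(x_i)(x_j)$ and arrives at the same identity $A^{*[m+1]}(x_1^{**},\ldots,x_m^{**})=\bigl(\prod_{k\neq i,j}x_k^{**}(\varphi_k)\bigr)T^{**}(x_i^{**})(x_j^{**})$, the only difference being that it computes the extension via the Davie--Gamelin iterated weak-star limits whereas you compose the operators $\overline{x_k''}^{\theta}$ directly; these are interchangeable here, and the normalization $\varphi_k(a_k)=1$ followed by the separate order continuity hypothesis in the $j$-th variable closes that case.

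The case $i>j$ is where your proposal has a genuine gap. As you correctly compute, the $j$-th slice of $A_T^{*[m+1]}$ is then $x_j''\mapsto x_j''\bigl(J_{E_j}^*(T^{**}(x_i''))\bigr)$, i.e.\ evaluation at a fixed element of $E_j^*$. Every such functional lies in $J_{E_j^*}(E_j^*)\subseteq\bigl((E_j^{**})^*\bigr)_n$ (a fact the paper itself uses in its final corollary), so it is order continuous automatically and the hypothesis yields no information at all in this configuration. Moreover, the repair you sketch cannot work: $J_{E_j}^*(T^{**}(x_i''))$ is just the restriction of $T^{**}(x_i'')$ to the canonical copy of $E_j$, which is a norm-bounded functional on $E_j$ and determines nothing about order continuity of $T^{**}(x_i'')$ on $E_j^{**}$ --- the restriction map $J_{E_j}^*\colon E_j^{***}\to E_j^*$ identifies functionals with wildly different order continuity behaviour, and the Section~3 example (where $\psi^{**}(z^{**})$ fails to be order continuous on $c_0^{**}$ although it restricts to a perfectly good element of $c_0^*$) shows there is no ``lifting'' from $J_{E_j}(E_j)$ to $E_j^{**}$. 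In fairness, the paper dispatches this case with the bare assertion ``of course we can assume $i<j$'' and supplies no more detail than you do; but the specific route you propose is not viable, so as written your argument only establishes the statement for the ordered pairs with $i<j$.
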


\begin{proof} Let $i,j=1,\ldots,m, i\neq j$, and $T\in \mathcal{L}_{r}(E_{i};E_{j}^{\ast})$ be given. For $k=1,\ldots,m, i \neq k\neq j$, choose  $0 \neq \varphi_{k}\in E_{k}^{\ast}$ and consider  the regular $m$-linear form $$A\colon E_{1}\times\cdots\times E_{m}\longrightarrow \mathbb{R}~,~A(x_{1},\ldots,x_{m})=\Bigg(\displaystyle\prod_{ \substack {k=1 \\ k\neq i, j}}^{m}\varphi_{k}(x_{k})\Bigg)T(x_{i})(x_{j}).$$
Of course we can assume $i<j$. Using the Davie--Gamelin description of the Arens extensions \cite{gamelin}, for $x_l^{**} \in E_l^{**}$ and nets $(x_{\alpha_{l}})_{\alpha_{l}\in \Omega_{l}}$ in $E_{l}$ such that $x_{l}^{\ast\ast}=\omega^{\ast}-\displaystyle\lim_{\alpha_{l}}J_{E_{l}}(x_{\alpha_{l}}), l=1,\ldots,m$, we have%.  sabemos que sua extensão de Aron-Berner $AB_{m}^{\theta}(A)$  de $A$ é dada por
%$$AB_{m}^{\theta}(A)(x_{1}^{\ast\ast},\ldots,x_{i}^{\ast\ast},\ldots,x_{j}^{\ast\ast},\ldots,x_{m}^{\ast\ast})=\lim_{\alpha_{1}}\cdots\lim_{\alpha_{i}}\cdots\lim_{\alpha_{j}}\cdots\lim_{\alpha_{m}} A(x_{\alpha_{1}},\ldots,x_{\alpha_{m}}).$$
%Onde $(x_{\alpha_{l}})_{\alpha_{l}\in \Omega_{l}}$ são redes em $E_{l}$ tais que $x_{l}^{\ast\ast}=\omega^{\ast}-\displaystyle\lim_{\alpha_{l}}J_{E_{l}}(x_{\alpha_{l}}), l=1,\ldots,m$. Assim,
\begin{align*}
A^{\ast[m+1]}&(x_{1}^{\ast\ast},\ldots,x_{i}^{\ast\ast},\ldots,x_{j}^{\ast\ast},\ldots,x_{m}^{\ast\ast})=\lim_{\alpha_{1}}\cdots\lim_{\alpha_{i}}\cdots\lim_{\alpha_{j}}\cdots\lim_{\alpha_{m}} A(x_{\alpha_{1}},\ldots,x_{\alpha_{m}})\\
&=\lim_{\alpha_{1}}\cdots\lim_{\alpha_{i}}\cdots\lim_{\alpha_{j}}\cdots\lim_{\alpha_{m}} \Bigg(\displaystyle\prod_{ \substack {k=1 \\ k\neq i, j}}^{m}\varphi_{k}(x_{\alpha_{k}})\Bigg)T(x_{\alpha_{i}})(x_{\alpha_{j}})\\
&=\lim_{\alpha_{1}}\cdots\lim_{\alpha_{i}}\cdots\lim_{\alpha_{j}}\cdots\lim_{\alpha_{m-1}} \Bigg(\displaystyle\prod_{ \substack {k=1 \\ k\neq i, j}}^{m-1}\varphi_{k}(x_{\alpha_{k}})\Bigg)T(x_{\alpha_{i}})(x_{\alpha_{j}})\lim_{\alpha_{m}} J_{E_{m}}(x_{\alpha_{m}})(\varphi_{m})\\
&=\lim_{\alpha_{1}}\cdots\lim_{\alpha_{i}}\cdots\lim_{\alpha_{j}}\cdots\lim_{\alpha_{m-1}} \Bigg(\displaystyle\prod_{ \substack {k=1 \\ k\neq i, j}}^{m-1}\varphi_{k}(x_{\alpha_{k}})\Bigg)T(x_{\alpha_{i}})(x_{\alpha_{j}}) x_{m}^{\ast\ast}(\varphi_{m})\\
&\,\,\,  \vdots\\
&=x_{m}^{\ast\ast}(\varphi_{m})\cdots x_{j+1}^{\ast\ast}(\varphi_{j+1})\lim_{\alpha_{1}}\cdots\lim_{\alpha_{i}}\cdots\lim_{\alpha_{j}}\Bigg(\displaystyle\prod_{ \substack {k=1 \\ k\neq i}}^{j-1}\varphi_{k}(x_{\alpha_{k}})\Bigg)T(x_{\alpha_{i}})(x_{\alpha_{j}}) \\
&=\prod_{k=j+1}^{m}x_{k}^{\ast\ast}(\varphi_{k})\lim_{\alpha_{1}}\cdots\lim_{\alpha_{i}}\cdots\lim_{\alpha_{j-1}}\Bigg(\displaystyle\prod_{ \substack {k=1 \\ k\neq i}}^{j-1}\varphi_{k}(x_{\alpha_{k}})\Bigg)\lim_{\alpha_{j}} T(x_{\alpha_{i}})(x_{\alpha_{j}}) \\
&=\prod_{k=j+1}^{m}x_{k}^{\ast\ast}(\varphi_{k})\lim_{\alpha_{1}}\cdots\lim_{\alpha_{i}}\cdots\lim_{\alpha_{j-1}}\Bigg(\displaystyle\prod_{ \substack {k=1 \\ k\neq i}}^{j-1}\varphi_{k}(x_{\alpha_{k}})\Bigg) x_{j}^{\ast\ast}( T(x_{\alpha_{i}}))\\
&\,\,\,  \vdots\\
&=\prod_{ \substack{k=i+1\\ k\neq j}}^{m}x_{k}^{\ast\ast}(\varphi_{k})\lim_{\alpha_{1}}\cdots\lim_{\alpha_{i}}\Bigg(\displaystyle\prod_{ \substack {k=1}}^{i-1}\varphi_{k}(x_{\alpha_{k}})\Bigg) x_{j}^{\ast\ast}( T(x_{\alpha_{i}}))\\
&=\prod_{ \substack{k=i+1\\ k\neq j}}^{m}x_{k}^{\ast\ast}(\varphi_{k})\lim_{\alpha_{1}}\cdots\lim_{\alpha_{i-1}}\Bigg(\displaystyle\prod_{ \substack {k=1}}^{i-1}\varphi_{k}(x_{\alpha_{k}})\Bigg) \lim_{\alpha_{i}}T^{\ast}(x_{j}^{\ast\ast})( x_{\alpha_{i}})\\
&=\prod_{ \substack{k=1\\ k\neq i, j}}^{m}x_{k}^{\ast\ast}(\varphi_{k}) \lim_{\alpha_{i}} J_{E_{i}}( x_{\alpha_{i}})(T^{\ast}(x_{j}^{\ast\ast}))=\Bigg(\prod_{ \substack{k=1\\ k\neq i, j}}^{m}x_{k}^{\ast\ast}(\varphi_{k})\Bigg)x_{i}^{\ast\ast}(T^{\ast}(x_{j}^{\ast\ast}))\\
&=\Bigg(\prod_{ \substack{k=1\\ k\neq i, j}}^{m}x_{k}^{\ast\ast}(\varphi_{k})\Bigg) T^{\ast\ast}(x_{i}^{\ast\ast})(x_{j}^{\ast\ast}).
\end{align*}
%Então $AB_{m}^{\theta}(A)(x_{1}^{\ast\ast},\ldots,x_{i}^{\ast\ast},\ldots,x_{j}^{\ast\ast},\ldots,x_{m}^{\ast\ast})=\Bigg(\prod_{ \substack{k=1\\ k\neq i, j}}^{m}x_{k}^{\ast\ast}(\varphi_{k})\Bigg) T^{\ast\ast}(x_{i}^{\ast\ast})(x_{j}^{\ast\ast})$, para todo $x_{i}^{\ast\ast}\in E_{i}^{\ast\ast}$. Como $\varphi_{k}\neq 0, k\neq i,j $
Choosing $x_{k}\in E_{k}$ so that $\varphi(x_{k})=1$, $i \neq k \neq j$, we get
\begin{align*}
A^{\ast[m+1]}(J_{E_{1}}(x_{1}),\ldots,x_{i}^{\ast\ast},\ldots, x_{j}^{\ast\ast},\ldots,J_{E_{m}}(x_{m})) %=\Bigg(\prod_{ \substack{k=1\\ k\neq i, j}}^{m}J_{E_{k}}(x_{k})(\varphi_{k})\Bigg) T^{\ast\ast}(x_{i}^{\ast\ast})(x_{j}^{\ast\ast})\\
%&=\Bigg(\prod_{ \substack{k=1\\ k\neq i, j}}^{m}\varphi_{k}(x_{k})\Bigg) T^{\ast\ast}(x_{i}^{\ast\ast})(x_{j}^{\ast\ast})
=T^{\ast\ast}(x_{i}^{\ast\ast})(x_{j}^{\ast\ast}).
\end{align*}
Since $A^{\ast[m+1]}$ is separately order continuous by assumption, the functional $T^{\ast\ast}(x_{i}^{\ast\ast})$ is order continuous for every $x_{i}^{\ast\ast}\in E_{i}^{\ast\ast}$.%é ordem contínuo. Trocando $i$ por $j$ e $j$ por $i$ segue que para $j<i$  o funcional $T^{\ast\ast}(x_{j}^{\ast\ast})$ é ordem contínuo, isto completa a prova.
\end{proof}

 Although the next results hold, with the obvious modifications, for all Arens extensions $AR_{m}^{\rho}(A)$ of a regular $m$-linear operator $A$, to make the proofs more readable we shall restrict ourselves to the extension $A^{\ast[m+1]}=AR_{m}^{\theta}(A)$.

\begin{lemma}\label{lema} Let
 $E_{1}, \ldots,E_{m}$ be Banach lattices,  $A\in\mathcal{L}_{r}(E_{1},\ldots,E_{m})$ and $i\in\{1,\ldots,m\}$. If $x_{j}\in E_{j}, j=1,\ldots,i-1$, and $x_{j}^{\ast\ast}\in E_{j}^{\ast\ast}, j=i+1,\ldots,m$, then the operator  $$A^{\ast[m+1]}(J_{E_{1}}(x_{1}),\ldots,J_{E_{i-1}}(x_{i-1}),\bullet,x_{i+1}^{\ast\ast},\ldots,x_{m}^{\ast\ast})\colon E_{i}^{\ast\ast}\longrightarrow\mathbb{R}$$ is $\omega^{\ast}$-continuous and
$$A^{\ast[m+1]}(J_{E_{1}}(x_{1}),\ldots,J_{E_{i-1}}(x_{i-1}),x_{i}^{\ast\ast},\ldots,x_{m}^{\ast\ast})=\big(\overline{x_{i}^{\ast\ast}}^{\theta}\circ \cdots\circ \overline{x_{m}^{\ast\ast}}^{\theta}\big)(A)(x_{1},\ldots,x_{i-1}).$$
\end{lemma}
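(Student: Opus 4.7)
The proof splits naturally into two parts: deriving the closed-form expression, and reading off $\omega^{*}$-continuity from it.

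For the formula, the starting point is the description of $AR_{m}^{\theta}(A)=A^{*[m+1]}$ recalled in Section~2. Since $\theta(k)=m-k+1$ and $F=\mathbb{R}$ (so $y'\circ A$ may be identified with $A$), that description reads
$$A^{*[m+1]}(y_{1}^{\prime\prime},\ldots,y_{m}^{\prime\prime})=\bigl(\overline{y_{1}^{\prime\prime}}^{\theta}\circ\overline{y_{2}^{\prime\prime}}^{\theta}\circ\cdots\circ\overline{y_{m}^{\prime\prime}}^{\theta}\bigr)(A),$$
where the innermost operator acts first. Substituting $y_{j}^{\prime\prime}=J_{E_{j}}(x_{j})$ for $j<i$ and $y_{j}^{\prime\prime}=x_{j}^{\prime\prime}$ for $j\ge i$ rewrites the left-hand side as $\bigl(\overline{J_{E_{1}}(x_{1})}^{\theta}\circ\cdots\circ\overline{J_{E_{i-1}}(x_{i-1})}^{\theta}\circ\overline{x_{i}^{\prime\prime}}^{\theta}\circ\cdots\circ\overline{x_{m}^{\prime\prime}}^{\theta}\bigr)(A)$.

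The key observation is that, for any $x\in E_{j}$ and any $D$ in the appropriate space of regular forms, the definition (\ref{defope}) together with $J_{E_{j}}(x)(\varphi)=\varphi(x)$ gives
$$\overline{J_{E_{j}}(x)}^{\theta}(D)(y_{1},\ldots,y_{j-1})=J_{E_{j}}(x)\bigl(D(y_{1},\ldots,y_{j-1};\bullet)\bigr)=D(y_{1},\ldots,y_{j-1},x),$$
so applying $\overline{J_{E_{j}}(x)}^{\theta}$ amounts to filling the $j$-th coordinate with $x$. Setting $B:=\bigl(\overline{x_{i}^{\prime\prime}}^{\theta}\circ\cdots\circ\overline{x_{m}^{\prime\prime}}^{\theta}\bigr)(A)\in\mathcal{L}_{r}(E_{1},\ldots,E_{i-1})$ and iterating this identity for $j=i-1,i-2,\ldots,1$ collapses the remaining composition to $B(x_{1},\ldots,x_{i-1})$, giving the claimed equation. (The case $i=1$ is just the displayed description of $A^{*[m+1]}$ itself.)

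For $\omega^{*}$-continuity, fix $x_{1},\ldots,x_{i-1}$ and $x_{i+1}^{\prime\prime},\ldots,x_{m}^{\prime\prime}$, and set $C:=\bigl(\overline{x_{i+1}^{\prime\prime}}^{\theta}\circ\cdots\circ\overline{x_{m}^{\prime\prime}}^{\theta}\bigr)(A)\in\mathcal{L}_{r}(E_{1},\ldots,E_{i})$, which is independent of $x_{i}^{\prime\prime}$. By the formula just proved and the definition of $\overline{x_{i}^{\prime\prime}}^{\theta}$,
$$A^{*[m+1]}(J_{E_{1}}(x_{1}),\ldots,J_{E_{i-1}}(x_{i-1}),x_{i}^{\prime\prime},\ldots,x_{m}^{\prime\prime})=\overline{x_{i}^{\prime\prime}}^{\theta}(C)(x_{1},\ldots,x_{i-1})=x_{i}^{\prime\prime}(\varphi),$$
where $\varphi:=C(x_{1},\ldots,x_{i-1};\bullet)$. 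Because $C$ is regular and all factors are Banach lattices, $C$ is norm-bounded (as a difference of positive multilinear forms, each automatically bounded), so $\varphi\in E_{i}^{*}$; hence $x_{i}^{\prime\prime}\mapsto x_{i}^{\prime\prime}(\varphi)$ is $\omega^{*}$-continuous on $E_{i}^{**}$, as required.

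The only anticipated obstacle is bookkeeping: tracking which coordinates have been ``consumed'' at each stage, so that the substitution step $\overline{J_{E_{j}}(x_{j})}^{\theta}$ is applied to an operator on precisely $\mathcal{L}_{r}(E_{1},\ldots,E_{j})$ and lands in $\mathcal{L}_{r}(E_{1},\ldots,E_{j-1})$. Once the indexing is aligned with the convention $\theta(k)=m-k+1$, the analytic content is immediate.
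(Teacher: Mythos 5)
Your proof is correct and takes essentially the same approach as the paper's: both unwind the composition $\bigl(\overline{J_{E_{1}}(x_{1})}^{\theta}\circ\cdots\circ\overline{J_{E_{i-1}}(x_{i-1})}^{\theta}\circ\overline{x_{i}^{\ast\ast}}^{\theta}\circ\cdots\circ\overline{x_{m}^{\ast\ast}}^{\theta}\bigr)(A)$ using that $\overline{J_{E_{j}}(x_{j})}^{\theta}$ fills in the $j$-th coordinate, and both obtain $\omega^{\ast}$-continuity by recognizing the map as $x_{i}^{\ast\ast}\mapsto x_{i}^{\ast\ast}(\varphi)$ for the fixed functional $\varphi=\bigl(\overline{x_{i+1}^{\ast\ast}}^{\theta}\circ\cdots\circ\overline{x_{m}^{\ast\ast}}^{\theta}\bigr)(A)(x_{1},\ldots,x_{i-1};\bullet)\in E_{i}^{\ast}$. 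The only differences are cosmetic: you collapse the composition from the inside out and invoke $\omega^{\ast}$-continuity of evaluation directly, while the paper peels from the outside in and verifies the continuity along an arbitrary $\omega^{\ast}$-convergent net.
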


\begin{proof}
 Let $(x_{\alpha_{i}}^{\ast\ast})_{\alpha_{i}\in \Omega_{i}}$ be a net in  $E_{i}^{\ast\ast}$ such that $x_{\alpha_{i}}^{\ast\ast}\xrightarrow{\,\, \omega^{\ast} \,\,} x_{i}^{\ast\ast}\in E_{i}^{\ast\ast}$. For every $x_{i}^{\ast}\in E_{i}^{\ast}$ we have $x_{i}^{\ast\ast}(x_{i}^{\ast})=\displaystyle\lim_{\alpha_{i}}x_{\alpha_{i}}^{\ast\ast}(x_{i}^{\ast})$. Given $x_{j}\in E_{j}, j=1,\ldots,i-1$ and $x_{j}^{\ast\ast}\in E_{j}^{\ast\ast}, j=i+1,\ldots,m$,
\begin{align*}
A^{\ast[m+1]}(J_{E_{1}}(x_{1}),&\ldots,J_{E_{i-1}}(x_{i-1}),x_{i}^{\ast\ast},\ldots,x_{m}^{\ast\ast})\\
&=\big(\overline{J_{E_{1}}(x_{1})}^{\theta}\circ\cdots\circ \overline{J_{E_{i-1}}(x_{i-1})}^{\theta}\circ \overline{x_{i}^{\ast\ast}}^{\theta}\circ \cdots\circ \overline{x_{m}^{\ast\ast}}^{\theta}\big)(A)\\
&=\overline{J_{E_{1}}(x_{1})}^{\theta}\big(\big(\overline{J_{E_{2}}(x_{1})}^{\theta}\circ\cdots\circ \overline{J_{E_{i-1}}(x_{i-1})}^{\theta}\circ \overline{x_{i}^{\ast\ast}}^{\theta}\circ \cdots\circ \overline{x_{m}^{\ast\ast}}^{\theta}\big)(A)\big)\\
&=J_{E_{1}}(x_{1})\big(\big(\overline{J_{E_{2}}(x_{2})}^{\theta}\circ\cdots\circ \overline{J_{E_{i-1}}(x_{i-1})}^{\theta}\circ \overline{x_{i}^{\ast\ast}}^{\theta}\circ \cdots\circ \overline{x_{m}^{\ast\ast}}^{\theta}\big)(A)\big)\\
&=\big(\overline{J_{E_{2}}(x_{2})}^{\theta}\circ\cdots\circ \overline{J_{E_{i-1}}(x_{i-1})}^{\theta}\circ \overline{x_{i}^{\ast\ast}}^{\theta}\circ \cdots\circ \overline{x_{m}^{\ast\ast}}^{\theta}\big)(A)(x_{1})\\
&=\overline{J_{E_{2}}(x_{2})}^{\theta}\big(\big(\overline{J_{E_{3}}(x_{3})}^{\theta}\circ\cdots\circ \overline{J_{E_{i-1}}(x_{i-1})}^{\theta}\circ \overline{x_{i}^{\ast\ast}}^{\theta}\circ \cdots\circ \overline{x_{m}^{\ast\ast}}^{\theta}\big)(A)\big)(x_{1})\\
&=J_{E_{2}}(x_{2})\big(\big(\big(\overline{J_{E_{3}}(x_{3})}^{\theta}\circ\cdots\circ \overline{J_{E_{i-1}}(x_{i-1})}^{\theta}\circ \overline{x_{i}^{\ast\ast}}^{\theta}\circ \cdots\circ \overline{x_{m}^{\ast\ast}}^{\theta}\big)(A)\big)(x_{1},\bullet)\big)\\
&=\big(\big(\overline{J_{E_{3}}(x_{3})}^{\theta}\circ\cdots\circ \overline{J_{E_{i-1}}(x_{i-1})}^{\theta}\circ \overline{x_{i}^{\ast\ast}}^{\theta}\circ \cdots\circ \overline{x_{m}^{\ast\ast}}^{\theta}\big)(A)\big)(x_{1},\bullet)(x_{2})\\
&=\big(\overline{J_{E_{3}}(x_{3})}^{\theta}\circ\cdots\circ \overline{J_{E_{i-1}}(x_{i-1})}^{\theta}\circ \overline{x_{i}^{\ast\ast}}^{\theta}\circ \cdots\circ \overline{x_{m}^{\ast\ast}}^{\theta}\big)(A)(x_{1},x_{2})\\
&~\,\,\, \vdots\\
&=\big(\overline{J_{E_{i-1}}(x_{i-1})}^{\theta}\circ \overline{x_{i}^{\ast\ast}}^{\theta}\circ \cdots\circ \overline{x_{m}^{\ast\ast}}^{\theta}\big)(A)(x_{1},\ldots,x_{i-2})\\
&=\overline{J_{E_{i-1}}(x_{i-1})}^{\theta}\big(\big(\overline{x_{i}^{\ast\ast}}^{\theta}\circ \cdots\circ \overline{x_{m}^{\ast\ast}}^{\theta}\big)(A)\big)(x_{1},\ldots,x_{i-2})\\
&=J_{E_{i-1}}(x_{i-1})\big(\big(\big(\overline{x_{i}^{\ast\ast}}^{\theta}\circ \cdots\circ \overline{x_{m}^{\ast\ast}}^{\theta}\big)(A)\big)(x_{1},\ldots,x_{i-2},\bullet)\big)\\
&=\big(\overline{x_{i}^{\ast\ast}}^{\theta}\circ \cdots\circ \overline{x_{m}^{\ast\ast}}^{\theta}\big)(A)(x_{1},\ldots,x_{i-2},\bullet)(x_{i-1})\\
&=\big(\overline{x_{i}^{\ast\ast}}^{\theta}\circ \cdots\circ \overline{x_{m}^{\ast\ast}}^{\theta}\big)(A)(x_{1},\ldots,x_{i-2},x_{i-1})\\
&=\overline{x_{i}^{\ast\ast}}^{\theta}\big(\big(\overline{x_{i-1}^{\ast\ast}}^{\theta}\circ \cdots\circ \overline{x_{m}^{\ast\ast}}^{\theta}\big)(A)\big)(x_{1},\ldots,x_{i-2},x_{i-1})\\
&=x_{i}^{\ast\ast}\big(\big(\big(\overline{x_{i-1}^{\ast\ast}}^{\theta}\circ \cdots\circ \overline{x_{m}^{\ast\ast}}^{\theta}\big)(A)\big)(x_{1},\ldots,x_{i-2},x_{i-1},\bullet)\big)\\
%\end{align*}
%Então $AB_{m}^{\theta}(A)(J_{E_{1}}(x_{1}),\ldots,J_{E_{i-1}}(x_{i-1}),x_{i}^{\ast\ast},\ldots,x_{m}^{\ast\ast})=\big(\overline{x_{i}^{\ast\ast}}^{\theta}\circ \cdots\circ \overline{x_{m}^{\ast\ast}}^{\theta}\big)(A)(x_{1},\ldots,x_{i-1})$. Mais ainda, como $\big(\big(\overline{x_{i-1}^{\ast\ast}}\circ \cdots\circ \overline{x_{m}^{\ast\ast}}\big)(A)\big)(x_{1},\ldots,x_{i-2},x_{i-1},\bullet)\in E_{i}^{\ast}$, segue que
%\begin{align*}
%AB_{m}^{\theta}(A)(J_{E_{1}}(x_{1}),\ldots,&J_{E_{i-1}}(x_{i-1}),x_{i}^{\ast\ast},\ldots,x_{m}^{\ast\ast})\\
%&=x_{i}^{\ast\ast}(\big(\big(\overline{x_{i-1}^{\ast\ast}}\circ \cdots\circ \overline{x_{m}^{\ast\ast}}\big)(A)\big)(x_{1},\ldots,x_{i-2},x_{i-1},\bullet))\\
&\stackrel{(\Delta)}{=}\displaystyle\lim_{\alpha_{i}}x_{\alpha_{i}}^{\ast\ast}\big(\big(\big(\overline{x_{i-1}^{\ast\ast}}^{\theta}\circ \cdots\circ \overline{x_{m}^{\ast\ast}}^{\theta}\big)(A)\big)(x_{1},\ldots,x_{i-2},x_{i-1},\bullet))\\
&=\displaystyle\lim_{\alpha_{i}}A^{\ast[m+1]}(J_{E_{1}}(x_{1}),\ldots,J_{E_{i-1}}(x_{i-1}),x_{\alpha_{i}}^{\ast\ast},\ldots,x_{m}^{\ast\ast}),
\end{align*}
where, in $(\Delta)$, we used that $\big(\big(\overline{x_{i-1}^{\ast\ast}}^{\theta}\circ \cdots\circ \overline{x_{m}^{\ast\ast}}^{\theta}\big)(A)\big)(x_{1},\ldots,x_{i-2},x_{i-1},\bullet)\in E_{i}^{\ast}$.
\end{proof}

\begin{definition}\rm Let $\cal P$ be a property of linear functionals on Banach lattices. We say that:\\
$\bullet$ A form $A\colon E_{1}^{\ast\ast}\times\cdots\times  E_{m}^{\ast\ast} \longrightarrow \mathbb{R}$, where $E_1, \ldots, E_m$ are Banach lattices, {\it has $\mathcal{P}$-separately} if for all $j \in \{1, \ldots, m\}$ and $x_{i}^{\ast\ast}\in E_{i}^{\ast\ast}, i=1,\ldots,m$, $i\neq j$, the functional
$$A_{x_{1}^{\ast\ast},\ldots, x_{j-1}^{\ast\ast},x_{j+1}^{\ast\ast},\ldots,x_{m}^{\ast\ast}}\colon E_{j}^{\ast\ast}\longrightarrow \mathbb{R}~,~x_j^{**} \mapsto A(x_1^{**}, \ldots, x_m^{**}),$$ has property $\mathcal{P}$.\\
$\bullet$ $\cal P$ is an {\it Arens property} if, regardless of the positive $m \geq 2$, the Banach lattices $E_1, \ldots, E_m$ and the form $A \in {\cal L}_r(E_1, \ldots, E_m)$, the Arens extension $A^{\ast[m+1]}$ of $A$ has $\cal P$ in the first variable, in the sense that the operator $A_{x_{2}^{\ast\ast}, \ldots,x_{m}^{\ast\ast}}\colon E_{1}^{\ast\ast}\longrightarrow \mathbb{R}$ has $\cal P$ for all $x_2^{\ast\ast} \in E_2^{**}, \ldots, x_m^{\ast\ast} \in E_m^{**}$.
\end{definition}

\begin{example}\rm Order continuity (Theorem \ref{quasesoc}(c)) and $\omega^*$-continuity \cite[p.\,413]{livrosean} are Arens properties.
\end{example}

\begin{theorem}\label{pro2} Let $\cal P$ be an Arens property, $m \geq 2$ and $E_{1}, \ldots,E_{m}$ be Banach lattices. Suppose that:\\
{\rm (i)} For $j=2,\ldots,m-1,$ and $ i=1,\ldots,m-j$, every regular linear operator from $E_{j}$ to $E_{j+i}^{\ast}$ is weakly compact;\\
{\rm (ii)} For all $k=2,\ldots,m$, $x_{1}^{\ast\ast}\in E_{1}^{\ast\ast}$ and   $T\in\mathcal{L}_{r}(E_{1};E_{k}^{\ast})$, the functional $T^{\ast\ast}(x_{1}^{\ast\ast})\in E_{k}^{\ast\ast\ast}$ has property $\mathcal{P}$.

 Then, for every form $A\in \mathcal{L}_{r}(E_{1},\ldots,E_{m})$, the Arens extension $A^{\ast[m+1]} \colon E_{1}^{\ast\ast}\times\cdots\times  E_{m}^{\ast\ast} \longrightarrow \mathbb{R}$  has $\mathcal{P}$-separately. %, sempre que $AB_{m}^{\theta}(A)$ tem a proprieadade $\mathcal{P}$ na primeira coordenada.
\end{theorem}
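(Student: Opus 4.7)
The plan is to reduce the general statement to the bilinear case. Since $\mathcal{P}$ is an Arens property, the functional $x_1^{\ast\ast}\mapsto A^{\ast[m+1]}(x_1^{\ast\ast},x_2^{\ast\ast},\ldots,x_m^{\ast\ast})$ automatically has $\mathcal{P}$, so property $\mathcal{P}$ in the first variable is free. Fix then $j\in\{2,\ldots,m\}$ and $x_i^{\ast\ast}\in E_i^{\ast\ast}$ for $i\neq j$, and introduce the auxiliary form
$$B\colon E_1\times E_j\longrightarrow \mathbb{R},\ \ B(x_1,x_j):=A^{\ast[m+1]}\bigl(J_{E_1}(x_1),x_2^{\ast\ast},\ldots,x_{j-1}^{\ast\ast},J_{E_j}(x_j),x_{j+1}^{\ast\ast},\ldots,x_m^{\ast\ast}\bigr).$$
Writing $A=A_1-A_2$ with $A_1,A_2\in\mathcal{L}_r(E_1,\ldots,E_m)$ positive and using that Arens extensions of positive operators are positive, each of the corresponding $B_l$ is a positive bilinear form, so $B=B_1-B_2\in\mathcal{L}_r(E_1,E_j)$.

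For the bilinear case, associate to any $C\in\mathcal{L}_r(E_1,E_j)$ the regular operator $T_C\colon E_1\longrightarrow E_j^\ast$, $T_C(x_1)(x_j):=C(x_1,x_j)$, and observe, directly from the composition description of the Arens extension, that
$$C^{\ast\ast\ast}(\xi,\eta)=\overline{\xi}^\theta\bigl(\overline{\eta}^\theta(C)\bigr)=\xi\bigl(T_C^\ast(\eta)\bigr)=T_C^{\ast\ast}(\xi)(\eta)$$
for all $\xi\in E_1^{\ast\ast}$ and $\eta\in E_j^{\ast\ast}$. Applied to $C=B$, this identity combined with condition (ii) (with $k=j$) shows that $x_j^{\prime\prime}\mapsto B^{\ast\ast\ast}(x_1^{\ast\ast},x_j^{\prime\prime})=T_B^{\ast\ast}(x_1^{\ast\ast})(x_j^{\prime\prime})$ has $\mathcal{P}$ for every $x_1^{\ast\ast}\in E_1^{\ast\ast}$. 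It therefore remains to establish the key identity
$$(\ast)\quad B^{\ast\ast\ast}(x_1^{\ast\ast},x_j^{\prime\prime})=A^{\ast[m+1]}\bigl(x_1^{\ast\ast},x_2^{\ast\ast},\ldots,x_{j-1}^{\ast\ast},x_j^{\prime\prime},x_{j+1}^{\ast\ast},\ldots,x_m^{\ast\ast}\bigr)$$
for all $x_1^{\ast\ast}\in E_1^{\ast\ast}$ and $x_j^{\prime\prime}\in E_j^{\ast\ast}$.

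The main obstacle is the identity $(\ast)$, and it is precisely here that condition (i) enters. Using the Davie--Gamelin iterated-limit description of $A^{\ast[m+1]}$ already employed in the proposition preceding Lemma \ref{lema}, both sides of $(\ast)$ are iterated weak$^\ast$ limits of $A(x_{\alpha_1},\ldots,x_{\alpha_m})$ along nets $(x_{\alpha_l})_{\alpha_l}\subseteq E_l$ converging weak$^\ast$ to $x_l^{\ast\ast}$ (and to $x_j^{\prime\prime}$ in the $j$-th slot), and they differ only in the position of $\lim_{\alpha_j}$ among the inner limits $\lim_{\alpha_2},\ldots,\lim_{\alpha_m}$: on the left $\lim_{\alpha_j}$ is second outermost (just inside $\lim_{\alpha_1}$), while on the right it occupies its natural $j$-th position. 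My plan is to bridge the two by $j-2$ successive adjacent swaps of $\lim_{\alpha_j}$ with $\lim_{\alpha_i}$ for $i=2,3,\ldots,j-1$. Each such swap reduces to the classical Arens statement that the two Arens extensions of a bilinear form coincide iff its associated linear operator is weakly compact: after freezing the remaining variables one gets a regular bilinear form on $E_i\times E_j$ whose associated operator $E_i\to E_j^\ast$ is regular and hence weakly compact by condition (i) (valid because $2\le i<j\le m$). The delicate bookkeeping---verifying at each intermediate step that the frozen variables yield a regular bilinear form so that condition (i) indeed applies---is handled most cleanly by rephrasing each swap algebraically as a commutation of the composition operators $\overline{\,\cdot\,}^\theta$ from (\ref{defope}), thereby sidestepping a direct manipulation of the iterated limits.
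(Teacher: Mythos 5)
Your argument is correct in outline, but it is organized quite differently from the paper's. The paper proceeds by induction on $m$: property $\cal P$ in the first $n$ variables of $A^{\ast[n+2]}$ is obtained by applying the induction hypothesis to $\overline{x_{n+1}^{\ast\ast}}^{\theta}(A)\in\mathcal{L}_r(E_1,\ldots,E_n)$, and only the last variable is treated directly, via a descending chain of identities (built on Lemma \ref{lema}) which successively replaces $J_{E_i}(x_i)$ by $x_i^{\ast\ast}$ for $i=n,n-1,\ldots,1$ and culminates in $A^{\ast[n+2]}(x_1^{\ast\ast},\ldots,x_n^{\ast\ast},\bullet)=[A_{x_2^{\ast\ast},\ldots,x_n^{\ast\ast}}]^{\ast\ast}(x_1^{\ast\ast})$ for a regular operator $A_{x_2^{\ast\ast},\ldots,x_n^{\ast\ast}}\colon E_1\to E_{n+1}^{\ast}$; each link uses weak compactness from (i) to secure $\omega^{\ast}$-continuity of the relevant second adjoint, and then (ii) is applied once. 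You avoid the induction entirely and treat every variable $j\ge 2$ symmetrically by reducing to a bilinear form on $E_1\times E_j$ and commuting $\lim_{\alpha_j}$ past $\lim_{\alpha_2},\ldots,\lim_{\alpha_{j-1}}$, each swap justified by the classical criterion that a bounded bilinear form is Arens regular when its associated operator is weakly compact. The underlying mechanism is the same (weak compactness $\Leftrightarrow$ $\omega^{\ast}$-continuity of $T^{\ast\ast}(x^{\ast\ast})$ $\Leftrightarrow$ interchangeability of the two iterated limits), but your decomposition is genuinely different and arguably more transparent: it exposes the result as a pairwise application of the bilinear Arens regularity criterion, and it uses condition (i) for exactly the pairs $2\le i<j\le m$ it provides. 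The price is that you import that classical criterion as a black box and must carry out the bookkeeping you defer: at each swap one must check that the inner bilinear form obtained by freezing the outer coordinates at points of $E_1,\ldots,E_{i-1}$ and the remaining coordinates at bidual elements is regular (it is, since Arens extensions of positive operators are positive, so one decomposes $A=A_1-A_2$), and that every intermediate iterated limit exists (it does, each being a composition of the operators from (\ref{defope}) for a suitable permutation). The paper's version is longer but self-contained, proving the needed $\omega^{\ast}$-continuity identities from scratch rather than citing the bilinear theory.
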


\begin{proof} We shall proceed by induction on $m$. Given $A\in \mathcal{L}_{r}(E_{1},E_{2})$, $A^{***}$ has property $\mathcal{P}$ in the first variable because $\cal P$ is an Arens property. Let us prove that, for every  $x_{1}^{\ast\ast}\in E_{1}^{\ast\ast}$, $A^{***}(x_{1}^{\ast\ast},\bullet)\in E_{2}^{\ast}$ has property $\mathcal{P}$. Consider the regular linear operator  $T\colon E_{1}\longrightarrow E_{2}^{\ast}$, $T(x_{1})=A(x_{1},\bullet)$. For all $x_{2}^{\ast\ast}\in E_{2}^{\ast\ast}$ and $x_{1}\in E_{1}$,
$$T^{\ast}(x_{2}^{\ast\ast})(x_{1})=x_{2}^{\ast\ast}(T(x_{1}))=x_{2}^{\ast\ast}(A(x_{1},\bullet))=\overline{x_{2}^{\ast\ast}}^{\theta}(A)(x_{1}),$$
that is, $T^{\ast}(x_{2}^{\ast\ast})=\overline{x_{2}^{\ast\ast}}^{\theta}(A).$ So, for all $x_{1}^{\ast\ast}\in E_{1}^{\ast\ast}, x_{2}^{\ast\ast}\in E_{2}^{\ast\ast}$,
$$T^{\ast\ast}(x_{1}^{\ast\ast})(x_{2}^{\ast\ast})=x_{1}^{\ast\ast}(T^{\ast}(x_{2}^{\ast\ast}))=x_{1}^{\ast\ast}\big(\overline{x_{2}^{\ast\ast}}^{\theta}(A)\big)=\big(\overline{x_{1}^{\ast\ast}}^{\theta}\circ \overline{x_{2}^{\ast\ast}}^{\theta}\big)(A)=A^{***}(x_{1}^{\ast\ast},x_{2}^{\ast\ast}).$$
Since $T^{\ast\ast}(x_{1}^{\ast\ast})$ has property $\mathcal{P}$ by assumption, it follows that $A^{***}(x_{1}^{\ast\ast},\bullet)$ has property $\mathcal{P}$. This shows that the result holds for $m=2$.

Assume now that the result holds for $n$ and let us prove it holds for $n+1$.  %Suponha que o resultado é válido para $n$ e vejamos que o resultado também é válido para $n+1$.
To do so we suppose that conditions (i) and (ii) hold for $n+1$. let $A\in \mathcal{L}_{r}(E_{1},\ldots,E_{n+1})$ be given. For every  $x_{i}^{\ast\ast}\in E_{i}^{\ast\ast}, i=2,\ldots,n+1$, we have
$$\overline{x_{i}^{\ast\ast}}^{\theta}\colon\mathcal{L}_{r}(E_{1},\ldots,E_{i})\longrightarrow \mathcal{L}_{r}(E_{1},\ldots,E_{i-1}),\, \overline{x_{i}^{\ast\ast}}^{\theta}(B)(x_{1},\ldots,x_{i-1})=x_{i}^{\ast\ast}(B(x_{1},\ldots,x_{i-1},\bullet)).$$
And for each $x_{1}^{\ast\ast}\in E_{1}^{\ast\ast}$, the functional  $\overline{x_{1}^{\ast\ast}}^{\theta}\colon E_{1}^{\ast}\longrightarrow \mathbb{R}$ is given by $\overline{x_{1}^{\ast\ast}}^{\theta}=x_{1}^{\ast\ast}$. Moreover,
\begin{align*}
A^{\ast[n+2]}(x_{1}^{\ast\ast},\ldots,x_{n+1}^{\ast\ast})&=\big(\overline{x_{1}^{\ast\ast}}^{\theta}\circ \cdots\circ \overline{x_{n+1}^{\ast\ast}}^{\theta}\big)(A)=\big(\overline{x_{1}^{\ast\ast}}^{\theta}\circ \cdots\circ \overline{x_{n}^{\ast\ast}}^{\theta}\big)\big(\overline{x_{n+1}^{\ast\ast}}^{\theta}(A)\big)\\
&=\big(\overline{x_{n+1}^{\ast\ast}}^{\theta}(A)\big)^{\ast[n+1]}(x_{1}^{\ast\ast},\ldots,x_{n}^{\ast\ast}).
\end{align*}
Since $\overline{x_{n+1}^{\ast\ast}}^{\theta}(A)\in\mathcal{L}_{r}(E_{1},\ldots,E_{n})$, by the induction hypothesis we have that  $\big(\overline{x_{n+1}^{\ast\ast}}^{\theta}(A)\big)^{\ast[n+1]}$ has  $\mathcal{P}$-separately, so  $A^{\ast[n+2]}$ has property $\mathcal{P}$ in the first $n$ variables. To prove that  $A^{\ast[n+2]}$ has property $\mathcal{P}$ in the $(n+1)$-th variable, let $x_{i}^{\ast\ast}\in E_{i}^{\ast\ast}, i=1,\ldots,n$, be given. Our job is to show that $A^{\ast[n+2]}(x_{1}^{\ast\ast},\ldots,x_{n}^{\ast\ast},\bullet)\colon E_{n+1}^{\ast\ast}\longrightarrow\mathbb{R}$ has property $\mathcal{P}$. Given $x_{i}\in E_{i}, i=1,\ldots,n-1$, consider the regular linear operator %e defina
$$A_{x_{1},\ldots,x_{n-1}}\colon E_{n}\longrightarrow E_{n+1}^{\ast},\, A_{x_{1},\ldots,x_{n-1}}(x_{n})=A(x_{1},\ldots,x_{n},\bullet).$$
 %então $A_{x_{1},\ldots,x_{n-1}}\in\mathcal{L}_{r}(E_{n};E_{n+1}^{\ast})$.
     Given $x_{n+1}^{\ast\ast}\in E_{n+1}^{\ast\ast}$, take a net $(x_{\alpha_n})_{\alpha_n}$ in $E_{n}$ such that  $J_{E_{n}}(x_{\alpha_n}) \stackrel{\omega^*} \longrightarrow x_{n}^{\ast\ast}$ and apply the  $\omega^{\ast}$-$\omega^{\ast}$-continuity of $[A_{x_{1},\ldots,x_{n-1}}]^{\ast\ast}$ and  %então pelo Lema  para todo
 Lemma \ref{lema} to obtain
\begin{align}
[A_{x_{1},\ldots,x_{n-1}}]^{\ast\ast}(x_{n}^{\ast\ast})(x_{n+1}^{\ast\ast})&
=\lim_{\alpha_{n}}[A_{x_{1},\ldots,x_{n-1}}]^{\ast\ast}(J_{E_{n}}(x_{\alpha_{n}}))(x_{n+1}^{\ast\ast})\nonumber\\
&=\lim_{\alpha_{n}}J_{E_{n+1}^{\ast}}(A_{x_{1},\ldots,x_{n-1}}(x_{\alpha_{n}}))(x_{n+1}^{\ast\ast})\nonumber\\
&=\lim_{\alpha_{n}}x_{n+1}^{\ast\ast}(A_{x_{1},\ldots,x_{n-1}}(x_{\alpha_{n}}))=\lim_{\alpha_{n}}x_{n+1}^{\ast\ast}(A(x_{1},\ldots,x_{n-1},x_{\alpha_{n}},\bullet))\nonumber\\
&=\lim_{\alpha_{n}}\overline{x_{n+1}^{\ast\ast}}^{\theta}(A)(x_{1},\ldots,x_{n-1},x_{\alpha_{n}})\nonumber\\
&=\lim_{\alpha_{n}} A^{\ast[n+2]}(J_{E_{1}}(x_{1}),\ldots,J_{E_{n-1}}(x_{n-1}),J_{E_{n}}(x_{\alpha_{n}}),x_{n+1}^{\ast\ast})\nonumber\\
&=A^{\ast[n+2]}(J_{E_{1}}(x_{1}),\ldots,J_{E_{n-1}}(x_{n-1}),x_{n}^{\ast\ast},x_{n+1}^{\ast\ast}). \label{e2}
\end{align}
%Obtemos que,
%\begin{equation}\label{e2}
%A_{x_{1},\ldots,x_{n-1}}^{\ast\ast}(x_{n}^{\ast\ast})(x_{n+1}^{\ast\ast})=AB_{n+1}^{\theta}(A)(J_{E_{1}}(x_{1}),\ldots,J_{E_{n-1}}(x_{n-1}),x_{n}^{\ast\ast},x_{n+1}^{\ast\ast}).
%\end{equation}
For $x_{n}^{\ast\ast}\in E_{n}^{\ast\ast}$ and $x_{i}\in E_{i},i=1\ldots,n-2$, consider the regular linear operator $A_{x_{1},\ldots,x_{n-2},x_{n}^{\ast\ast}}\colon$ $ E_{n-1}\longrightarrow E_{n+1}^{\ast}$  given by
$$A_{x_{1},\ldots,x_{n-2},x_{n}^{\ast\ast}}(x_{n-1})(x_{n+1})=A^{\ast[n+2]}(J_{E_{1}}(x_{1}),\ldots,J_{E_{n-1}}(x_{n-1}),x_{n}^{\ast\ast},J_{E_{n+1}}(x_{n+1})).$$
%Então $A_{x_{1},\ldots,x_{n-2},x_{n}^{\ast\ast}}\in\mathcal{L}_{r}(E_{n-1};E_{n+1}^{\ast})$. Note que
On the one hand, for every $x_{n-1}\in E_{n-1}$ the functional $[A_{x_{1},\ldots,x_{n-2},x_{n}^{\ast\ast}}(x_{n-1})]^{\ast\ast}$ is a  $\omega^{\ast}$-continuous extension of  $A_{x_{1},\ldots,x_{n-2},x_{n}^{\ast\ast}}(x_{n-1})$. On the other hand, since  $A_{x_1, \ldots, x_{n-1}}$ is weakly compact by assumption,  %$\mathcal{L}(E_{n};E_{n+1}^{\ast})=\mathcal{W}(E_{n};E_{n+1}^{\ast})$ então para todo
for every $x_{n}^{\ast\ast}\in E_{n}^{\ast\ast}$ the functional $[A_{x_{1},\ldots,x_{n-1}}]^{\ast\ast}(x_{n}^{\ast\ast})$ is $\omega^{\ast}$-continuous. Taking a net $(x_{\alpha_{n+1}})_{\alpha_{n+1}}$ in $E_{n+1}$ such that  $J_{E_{n+1}}(x_{\alpha_{n+1}}) \stackrel{\omega^*} \longrightarrow x_{n+1}^{\ast\ast}$, %from (\ref{e2}) it follows that
\begin{align}
[A_{x_{1},\ldots,x_{n-2},x_{n}^{\ast\ast}}(x_{n-1})]^{\ast\ast}(x_{n+1}^{\ast\ast})&=\lim_{\alpha_{n+1}}[A_{x_{1},\ldots,x_{n-2},x_{n}^{\ast\ast}}(x_{n-1})]^{\ast\ast}(J_{E_{n+1}}(x_{\alpha_{n+1}}))\nonumber\\
&=\lim_{\alpha_{n+1}}J_{E_{n+1}}(x_{\alpha_{n+1}})(A_{x_{1},\ldots,x_{n-2},x_{n}^{\ast\ast}}(x_{n-1}))\nonumber\\
&=\lim_{\alpha_{n+1}}A_{x_{1},\ldots,x_{n-2},x_{n}^{\ast\ast}}(x_{n-1})(x_{\alpha_{n+1}})\nonumber\\
&=\lim_{\alpha_{n+1}}A^{\ast[n+2]}(J_{E_{1}}(x_{1}),\ldots,J_{E_{n-1}}(x_{n-1}),x_{n}^{\ast\ast},J_{E_{n+1}}(x_{\alpha_{n+1}}))\nonumber\\
&=\lim_{\alpha_{n+1}}[A_{x_{1},\ldots,x_{n-1}}]^{\ast\ast}(x_{n}^{\ast\ast})(J_{E_{n+1}}(x_{\alpha_{n+1}}))\nonumber\\
&=[A_{x_{1},\ldots,x_{n-1}}]^{\ast\ast}(x_{n}^{\ast\ast})(x_{n+1}^{\ast\ast})\nonumber\\
&\stackrel{\rm  (\ref{e2})}{=}A^{\ast[n+2]}(J_{E_{1}}(x_{1}),\ldots,J_{E_{n-1}}(x_{n-1}),x_{n}^{\ast\ast},x_{n+1}^{\ast\ast}).\label{plkw}
\end{align}
%Segue que, $[A_{x_{1},\ldots,x_{n-2},x_{n}^{\ast\ast}}(x_{n-1})]^{\ast\ast}(x_{n+1}^{\ast\ast})=AB_{n+1}^{\theta}(A)(J_{E_{1}}(x_{1}),\ldots,J_{E_{n-1}}(x_{n-1}),x_{n}^{\ast\ast},x_{n+1}^{\ast\ast})$
Take a net $(x_{\alpha_{n-1}})_{\alpha_{n-1}}$ in $E_{n-1}$ such that  $J_{E_{n-1}}(x_{\alpha_{n-1}}) \stackrel{\omega^*} \longrightarrow x_{n-1}^{\ast\ast}$. Using that $[A_{x_{1},\ldots,x_{n-2},x_{n}^{\ast\ast}}]^{\ast\ast}$ is $\omega^{\ast}$-$\omega^{\ast}$-continuous and calling on Lemma \ref{lema}, for each  $x_{n+1}^{\ast\ast}\in E_{n+1}^{\ast\ast}$ we have
\begin{align}
[A_{x_{1},\ldots,x_{n-2},x_{n}^{\ast\ast}}]^{\ast\ast}(x_{n-1}^{\ast\ast})(x_{n+1}^{\ast\ast})&=\lim_{\alpha_{n-1}}[A_{x_{1},\ldots,x_{n-2},x_{n}^{\ast\ast}}]^{\ast\ast}(J_{E_{n-1}}(x_{\alpha_{n-1}}))(x_{n+1}^{\ast\ast})\nonumber\\
&=\lim_{\alpha_{n-1}}J_{E_{n+1}}(A_{x_{1},\ldots,x_{n-2},x_{n}^{\ast\ast}}(x_{\alpha_{n-1}}))(x_{n+1}^{\ast\ast})\nonumber\\
&=\lim_{\alpha_{n-1}}x_{n+1}^{\ast\ast}(A_{x_{1},\ldots,x_{n-2},x_{n}^{\ast\ast}}(x_{\alpha_{n-1}}))\nonumber\\
&=\lim_{\alpha_{n-1}}[A_{x_{1},\ldots,x_{n-2},x_{n}^{\ast\ast}}(x_{\alpha_{n-1}})]^{\ast\ast}(x_{n+1}^{\ast\ast})\nonumber\\
&\stackrel{\rm (\ref{plkw})}{=}\lim_{\alpha_{n-1}} A^{\ast[n+2]}(J_{E_{1}}(x_{1}),\ldots,J_{E_{n-1}}(x_{\alpha_{n-1}}),x_{n}^{\ast\ast},x_{n+1}^{\ast\ast})\nonumber\\
&=A^{\ast[n+2]}(J_{E_{1}}(x_{1}),\ldots,J_{E_{n-2}}(x_{n-2}),x_{n-1}^{\ast\ast},x_{n}^{\ast\ast},x_{n+1}^{\ast\ast}).\label{e3}
\end{align}
%Obtemos que,
%\begin{equation}\label{e3}
%A_{x_{1},\ldots,x_{n-2},x_{n}^{\ast\ast}}^{\ast\ast}(x_{n-1}^{\ast\ast})(x_{n+1}^{\ast\ast})=AB_{n+1}^{\theta}(A)(J_{E_{1}}(x_{1}),\ldots,J_{E_{n-2}}(x_{n-2}),x_{n-1}^{\ast\ast},x_{n}^{\ast\ast},x_{n+1}^{\ast\ast}).
%\end{equation}
For $x_{n-1}^{\ast\ast}\in E_{n-1}^{\ast\ast}, x_{n}^{\ast\ast}\in E_{n}^{\ast\ast}$ and $x_{i}\in E_{i},i=1\ldots,n-3$, consider the regular linear operator $A_{x_{1},\ldots,x_{n-3},x_{n-1}^{\ast\ast},x_{n}^{\ast\ast}}\colon E_{n-2}\longrightarrow E_{n+1}^{\ast}$ given by
$$A_{x_{1},\ldots,x_{n-3},x_{n-1}^{\ast\ast},x_{n}^{\ast\ast}}(x_{n-2})(x_{n+1})=A^{\ast[n+2]}(J_{E_{1}}(x_{1}),\ldots,J_{E_{n-2}}(x_{n-2}),x_{n-1}^{\ast\ast},x_{n}^{\ast\ast},J_{E_{n+1}}(x_{n+1})).$$
%Então $A_{x_{1},\ldots,x_{n-3},x_{n-1}^{\ast\ast},x_{n}^{\ast\ast}}\in\mathcal{L}_{r}(E_{n-2};E_{n+1}^{\ast})$
On the one hand, for every $x_{n-2}\in E_{n-2}$ the functional $[A_{x_{1},\ldots,x_{n-3},x_{n-1}^{\ast\ast},x_{n}^{\ast\ast}}(x_{n-2})]^{\ast\ast}$ is a $\omega^{\ast}$-continuous extension of  $A_{x_{1},\ldots,x_{n-3},x_{n-1}^{\ast\ast},x_{n}^{\ast\ast}}(x_{n-2})$. On the other hand, since $A_{x_{1},\ldots,x_{n-2},x_{n}^{**}}$ is weakly compact by assumption, %$\mathcal{L}(E_{n-1};E_{n+1}^{\ast})=\mathcal{W}(E_{n-1};E_{n+1}^{\ast})$ então
for every $x_{n-1}^{\ast\ast}\in E_{n}^{\ast\ast}$ the functional $[A_{x_{1},\ldots,x_{n-2},x_{n}]^{\ast\ast}}^{\ast\ast}(x_{n-1}^{\ast\ast})$ is $\omega^{\ast}$-continuous. So, %Putting %assim por \ref{e3} com
%$\varphi_{n+1}=[A_{x_{1},\ldots,x_{n-3},x_{n-1}^{\ast\ast},x_{n}^{\ast\ast}}(x_{n-2})]^{\ast\ast}\in E_{n+1}^{\ast\ast\ast}$,
\begin{align}
[A_{x_{1},\ldots,x_{n-3},x_{n-1}^{\ast\ast},x_{n}^{\ast\ast}}&(x_{n-2})]^{\ast\ast}(x_{n+1}^{\ast\ast})=\lim_{\alpha_{n+1}}[A_{x_{1},\ldots,x_{n-3},x_{n-1}^{\ast\ast},x_{n}^{\ast\ast}}(x_{n-2})]^{\ast\ast}(J_{E_{n+1}}(x_{\alpha_{n+1}}))\nonumber\\
&=\lim_{\alpha_{n+1}}J_{E_{n+1}}(x_{\alpha_{n+1}})(A_{x_{1},\ldots,x_{n-3},x_{n-1}^{\ast\ast},x_{n}^{\ast\ast}}(x_{n-2}))\nonumber\\
&=\lim_{\alpha_{n+1}}A_{x_{1},\ldots,x_{n-3},x_{n-1}^{\ast\ast},x_{n}^{\ast\ast}}(x_{n-2})(x_{\alpha_{n+1}})\nonumber\\
&=\lim_{\alpha_{n+1}}A^{\ast[n+2]}(J_{E_{1}}(x_{1}),\ldots,J_{E_{n-2}}(x_{n-2}),x_{n-1}^{\ast\ast},x_{n}^{\ast\ast},J_{E_{n+1}}(x_{\alpha_{n+1}}))\nonumber\\
&=\lim_{\alpha_{n+1}}[A_{x_{1},\ldots,x_{n-2},x_{n}^{\ast\ast}}]^{\ast\ast}(x_{n-1}^{\ast\ast})(J_{E_{n+1}}(x_{\alpha_{n+1}}))\nonumber\\
&=[A_{x_{1},\ldots,x_{n-2},x_{n}^{\ast\ast}}]^{\ast\ast}(x_{n-1}^{\ast\ast})(x_{n+1}^{\ast\ast})\nonumber\\
&\stackrel{\rm (\ref{e3})}{=}A^{\ast[n+2]}(J_{E_{1}}(x_{1}),\ldots,J_{E_{n-2}}(x_{n-2}),x_{n-1}^{\ast\ast},x_{n}^{\ast\ast},x_{n+1}^{\ast\ast})\label{uyew}.
\end{align}
%Segue que,
%$$[A_{x_{1},\ldots,x_{n-3},x_{n-1}^{\ast\ast},x_{n}^{\ast\ast}}(x_{n-2})]^{\ast\ast}(x_{n+1}^{\ast\ast})=AB_{n+1}^{\theta}(A)(J_{E_{1}}(x_{1}),\ldots,J_{E_{n-2}}(x_{n-2}),x_{n-1}^{\ast\ast},x_{n}^{\ast\ast},x_{n+1}^{\ast\ast}).$$
     Since the operator $[A_{x_{1},\ldots,x_{n-3},x_{n-1}^{\ast\ast},x_{n}^{\ast\ast}}]^{\ast\ast}$ is $\omega^{\ast}$-$\omega^{\ast}$-continuous, for every $x_{n+1}^{\ast\ast}\in E_{n+1}^{\ast\ast}$, taking a net $(x_{\alpha_{n-2}})_{\alpha_{n-2}}$ in $E_{n-2}$ such that  $J_{E_{n-2}}(x_{\alpha_{n-2}}) \stackrel{\omega^*} \longrightarrow x_{n-2}^{\ast\ast}$, by Lemma \ref{lema} we have
\begin{align*}
[A_{x_{1},\ldots,x_{n-3},x_{n-1}^{\ast\ast},x_{n}^{\ast\ast}}]^{\ast\ast}(x_{n-2}^{\ast\ast})(x_{n+1}^{\ast\ast})&=\lim_{\alpha_{n-2}}[A_{x_{1},\ldots,x_{n-3},x_{n-1}^{\ast\ast},x_{n}^{\ast\ast}}]^{\ast\ast}(J_{E_{n-2}}(x_{\alpha_{n-2}}))(x_{n+1}^{\ast\ast})\\
&=\lim_{\alpha_{n-2}}J_{E_{n+1}}(A_{x_{1},\ldots,x_{n-3},x_{n-1}^{\ast\ast},x_{n}^{\ast\ast}}(x_{\alpha_{n-2}}))(x_{n+1}^{\ast\ast})\\
&=\lim_{\alpha_{n-2}}x_{n+1}^{\ast\ast}(A_{x_{1},\ldots,x_{n-3},x_{n-1}^{\ast\ast},x_{n}^{\ast\ast}}(x_{\alpha_{n-2}}))\\
&=\lim_{\alpha_{n-2}}[A_{x_{1},\ldots,x_{n-3},x_{n-1}^{\ast\ast},x_{n}^{\ast\ast}}(x_{\alpha_{n-2}})]^{\ast\ast}(x_{n+1}^{\ast\ast})\\
&\stackrel{\rm(\ref{uyew})}{=}\lim_{\alpha_{n-2}} A^{\ast[n+2]}(J_{E_{1}}(x_{1}),\ldots,J_{E_{n-2}}(x_{\alpha_{n-2}}),x_{n-1}^{\ast\ast},x_{n}^{\ast\ast},x_{n+1}^{\ast\ast})\\
&=A^{\ast[n+2]}(J_{E_{1}}(x_{1}),\ldots,J_{E_{n-3}}(x_{n-3}),x_{n-2}^{\ast\ast},x_{n-1}^{\ast\ast},x_{n}^{\ast\ast},x_{n+1}^{\ast\ast}).
\end{align*}
%Obtemos que,
%$$A_{x_{1},\ldots,x_{n-3},x_{n-1}^{\ast\ast},x_{n}^{\ast\ast}}^{\ast\ast}(x_{n-2}^{\ast\ast})(x_{n+1}^{\ast\ast})=AB_{n+1}^{\theta}(A)(J_{E_{1}}(x_{1}),\ldots,J_{E_{n-3}}(x_{n-3}),x_{n-2}^{\ast\ast},x_{n-1}^{\ast\ast},x_{n}^{\ast\ast},x_{n+1}^{\ast\ast}).$$
Repeating the procedure $(n-3)$ times, we end up with
\begin{equation}\label{e4}
[A_{x_{1},x_{3}^{\ast\ast},\ldots,x_{n}^{\ast\ast}}]^{\ast\ast}(x_{2}^{\ast\ast})(x_{n+1}^{\ast\ast})=A^{\ast[n+2]}(J_{E_{1}}(x_{1}),x_{2}^{\ast\ast},\ldots,x_{n+1}^{\ast\ast}),
\end{equation}
for every $x_{n+1}^{\ast\ast}\in E_{n+1}^{\ast\ast}$, where, for each  $x_{1}\in E_{1}$ and $x_{i}^{\ast\ast}\in E_{i}^{\ast\ast}, i=3,\ldots,n$,   $A_{x_{1},x_{3}^{\ast\ast},\ldots,x_{n}^{\ast\ast}}\colon$ $ E_{2}\longrightarrow E_{n+1}^{\ast}$ is the regular linear operator given by
$$A_{x_{1},x_{3}^{\ast\ast},\ldots,x_{n}^{\ast\ast}}(x_{2})(x_{n+1})= A^{\ast[n+2]}(J_{E_{1}}(x_{1}),J_{E_{2}}(x_{2}),x_{3}^{\ast\ast},\ldots,x_{n}^{\ast\ast},J_{E_{n+1}}(x_{n+1})).$$
Finally, given $x_{i}^{\ast\ast}\in E_{i}^{\ast\ast}, i=2,\ldots,n$, the regular linear operator $A_{x_{2}^{\ast\ast},\ldots,x_{n}^{\ast\ast}}\colon E_{1}\longrightarrow E_{n+1}^{\ast}$ defined by
$$A_{x_{2}^{\ast\ast},\ldots,x_{n}^{\ast\ast}}(x_{1})(x_{n+1})= A^{\ast[n+2]}(J_{E_{1}}(x_{1}),x_{2}^{\ast\ast},\ldots,x_{n}^{\ast\ast},J_{E_{n+1}}(x_{n+1})),$$
is weakly compact by condition (ii) for $n+1$. %Então $A_{x_{2}^{\ast\ast},\ldots,x_{n}^{\ast\ast}}\in\mathcal{L}_{r}(E_{1};E_{n+1}^{\ast})$ e  como $\mathcal{L}(E_{2};E_{n+1}^{\ast})=\mathcal{W}(E_{2};E_{n+1}^{\ast})$ então para todo
So, for every $x_{2}^{\ast\ast}\in E_{2}^{\ast\ast}$, $[A_{x_{1},x_{3}^{\ast\ast},\ldots,x_{n}^{\ast\ast}}]^{\ast\ast}(x_{2}^{\ast\ast})$ is $\omega^{\ast}$-continuous, therefore
\begin{align}
[A_{x_{2}^{\ast\ast},\ldots,x_{n}^{\ast\ast}}(x_{1})]^{\ast\ast}(x_{n+1}^{\ast\ast})&=\lim_{\alpha_{n+1}}[A_{x_{2}^{\ast\ast},\ldots,x_{n}^{\ast\ast}}(x_{1})]^{\ast\ast}(J_{E_{n+1}}(x_{\alpha_{n+1}}))\nonumber\\
&=\lim_{\alpha_{n+1}}J_{E_{n+1}}(x_{\alpha_{n+1}})(A_{x_{2}^{\ast\ast},\ldots,x_{n}^{\ast\ast}}(x_{1}))\nonumber\\
&=\lim_{\alpha_{n+1}}A_{x_{2}^{\ast\ast},\ldots,x_{n}^{\ast\ast}}(x_{1})(x_{\alpha_{n+1}})\nonumber\\
&=\lim_{\alpha_{n+1}} A^{\ast[n+2]}(J_{E_{1}}(x_{1}),x_{2}^{\ast\ast},\ldots,x_{n}^{\ast\ast},J_{E_{n+1}}(x_{\alpha_{n+1}}))\nonumber\\
&=\lim_{\alpha_{n+1}} [A_{x_{1},x_{3}^{\ast\ast},\ldots,x_{n}^{\ast\ast}}]^{\ast\ast}(x_{2}^{\ast\ast})(J_{E_{n+1}}(x_{\alpha_{n+1}}))\nonumber\\
&=[A_{x_{1},x_{3}^{\ast\ast},\ldots,x_{n}^{\ast\ast}}]^{\ast\ast}(x_{2}^{\ast\ast})(x_{n+1}^{\ast\ast})\nonumber\\
&\stackrel{\rm (\ref{e4})}{=}A^{\ast[n+2]}(J_{E_{1}}(x_{1}),x_{2}^{\ast\ast},\ldots,x_{n+1}^{\ast\ast}). \label{erny}
\end{align}
%Segue que, $[A_{x_{2}^{\ast\ast},\ldots,x_{n}^{\ast\ast}}(x_{1})]^{\ast\ast}(x_{n+1}^{\ast\ast})=AB_{n+1}^{\theta}(A)(J_{E_{1}}(x_{1}),x_{2}^{\ast\ast},\ldots,x_{n+1}^{\ast\ast}).$ De novo
For the last time, taking a net $(x_{\alpha_{1}})_{\alpha_{1}}$ in $E_{1}$ such that  $J_{E_{1}}(x_{\alpha_{1}}) \stackrel{\omega^*} \longrightarrow x_{1}^{\ast\ast}$, the $\omega^{\ast}$-$\omega^{\ast}$ continuity of $[A_{x_{2}^{\ast},\ldots,x_{n}^{\ast\ast}}]^{\ast\ast}$ and  Lemma \ref{lema} give,  for every $x_{n+1}^{\ast\ast}\in E_{n+1}^{\ast\ast}$,
\begin{align*}
[A_{x_{2}^{\ast\ast},\ldots,x_{n}^{\ast\ast}}]^{**}(x_{1}^{\ast\ast})(x_{n+1}^{\ast\ast})&=\lim_{\alpha_{1}}[A_{x_{2}^{\ast\ast},\ldots,x_{n}^{\ast\ast}}]^{\ast\ast}(J_{E_{1}}(x_{\alpha_{1}})(x_{n+1}^{\ast\ast})\\
&=\lim_{\alpha_{1}}J_{E_{n+1}}(A_{x_{2}^{\ast\ast},\ldots,x_{n}^{\ast\ast}}(x_{\alpha_{1}}))(x_{n+1}^{\ast\ast})\\
&=\lim_{\alpha_{1}}x_{n+1}^{\ast\ast}(A_{x_{2}^{\ast\ast},\ldots,x_{n}^{\ast\ast}}(x_{\alpha_{1}}))\\
&=\lim_{\alpha_{1}}[A_{x_{2}^{\ast\ast},\ldots,x_{n}^{\ast\ast}}(x_{\alpha_{1}})]^{\ast\ast}(x_{n+1}^{\ast\ast})\\
&\stackrel{\rm (\ref{erny})}{=}\lim_{\alpha_{1}} A^{\ast[n+2]}(J_{E_{1}}(x_{\alpha_{1}}),x_{2}^{\ast\ast},\ldots,x_{n+1}^{\ast\ast})\\
&=A^{\ast[n+2]}(x_{1}^{\ast\ast},\ldots,x_{n+1}^{\ast\ast}).
\end{align*}
This proves that $[A_{x_{2}^{\ast\ast},\ldots,x_{n}^{\ast\ast}}]^{\ast\ast}(x_{1}^{\ast\ast}) = A^{\ast[n+2]}(x_{1}^{\ast\ast},\ldots,x_{n}^{\ast\ast},\bullet)$. %Obtemos que $A_{x_{2}^{\ast\ast},\ldots,x_{n}^{\ast\ast}}^{\ast\ast}(x_{1}^{\ast\ast})(x_{n+1}^{\ast\ast})=AB_{n+1}^{\theta}(A)(x_{1}^{\ast\ast},\ldots,x_{n+1}^{\ast\ast})$
By condition (ii) for $n+1$ we know that $[A_{x_{2}^{\ast\ast},\ldots,x_{n}^{\ast\ast}}]^{\ast\ast}(x_{1}^{\ast\ast})$ has property $\mathcal{P}$, so  $A^{\ast[n+2]}(x_{1}^{\ast\ast},\ldots,x_{n}^{\ast\ast},\bullet)$ has property $\mathcal{P}$, which completes the proof.
\end{proof}

Theorem \ref{pro2} gives sufficient conditions for Arens extensions of regular multilinear forms to be separately order continuous on the product of the whole of the biduals. Now we derive the case of vector-valued regular multilinear operators.
\begin{theorem}\label{ltheo} Let $m \geq 2$ and $E_{1}, \ldots,E_{m}$ be Banach lattices such that:\\
{\rm (i)} For $j=2,\ldots,m-1,$ and $ i=1,\ldots,m-j$, every regular linear operator from $E_{j}$ to $E_{j+i}^{\ast}$ is weakly compact;\\
{\rm (ii)} For all $k=2,\ldots,m$, $x_{1}^{\ast\ast}\in E_{1}^{\ast\ast}$ and   $T\in\mathcal{L}_{r}(E_{1};E_{k}^{\ast})$, the functional $T^{\ast\ast}(x_{1}^{\ast\ast})$ is order continuous on $E_{k}^{\ast\ast\ast}$.

 Then, for every Banach lattice $F$ and any $A\in \mathcal{L}_{r}(E_{1},\ldots,E_{m};F)$, the Arens extension $A^{\ast[m+1]}$ is separately order continuous on $E_{1}^{\ast\ast}\times\cdots\times  E_{m}^{\ast\ast}$. % \longrightarrow F^{**}$  has $\mathcal{P}$-separately.
\end{theorem}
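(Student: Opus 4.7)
The plan is to reduce the vector-valued statement to the scalar-valued Theorem \ref{pro2} by post-composing $A$ with functionals $y^{*}\in F^{*}$. The central identity I will invoke is
$$A^{\ast[m+1]}(x_{1}^{\ast\ast},\ldots,x_{m}^{\ast\ast})(y^{*}) = (y^{*}\circ A)^{\ast[m+1]}(x_{1}^{\ast\ast},\ldots,x_{m}^{\ast\ast}) \quad \text{for every } y^{*}\in F^{*},$$
which follows at once from the defining formula of $AR_{m}^{\theta}$ together with the observation that, for a scalar-valued form $B$, the extension reduces to $(\overline{x_{m}^{\ast\ast}}^{\theta}\circ\cdots\circ\overline{x_{1}^{\ast\ast}}^{\theta})(B)$. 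Since $F^{*}=F^{\sim}$ for the Banach lattice $F$, every $y^{*}\in F^{*}$ is regular, hence $y^{*}\circ A\in\mathcal{L}_{r}(E_{1},\ldots,E_{m})$ and the hypotheses (i)--(ii) of the present theorem are exactly the hypotheses of Theorem \ref{pro2} with $\mathcal{P}$ taken to be order continuity (an Arens property in view of Theorem \ref{quasesoc}(c)). Consequently, $(y^{*}\circ A)^{\ast[m+1]}$ is order continuous separately on $E_{1}^{\ast\ast}\times\cdots\times E_{m}^{\ast\ast}$.

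Fix $j\in\{1,\ldots,m\}$ and $x_{i}^{\ast\ast}\in E_{i}^{\ast\ast}$ for $i\neq j$, and set $B_{j}(x_{j}^{\ast\ast}):=A^{\ast[m+1]}(x_{1}^{\ast\ast},\ldots,x_{j}^{\ast\ast},\ldots,x_{m}^{\ast\ast})$. I must show that the regular linear operator $B_{j}\colon E_{j}^{\ast\ast}\longrightarrow F^{\ast\ast}$ is order continuous. Writing $A=A_{1}-A_{2}$ with $A_{1},A_{2}$ positive, decomposing each fixed $x_{i}^{\ast\ast}$ into its positive and negative parts, and using the linearity of $A\mapsto AR_{m}^{\theta}(A)$, I reduce to the case in which $A$ is positive and the fixed coordinates $x_{i}^{\ast\ast}$ are positive; in that case $B_{j}$ is a positive operator, and since $E_{j}^{\ast\ast}$ and $F^{\ast\ast}$ are Dedekind complete, order continuity of $B_{j}$ will follow from $B_{j}(z_{\alpha_{j}}^{\ast\ast})\downarrow 0$ in $F^{\ast\ast}$ whenever $z_{\alpha_{j}}^{\ast\ast}\downarrow 0$ in $E_{j}^{\ast\ast}$. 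Setting $w^{\ast\ast}:=\inf_{\alpha_{j}}B_{j}(z_{\alpha_{j}}^{\ast\ast})\geq 0$, for any $0\leq y^{*}\in F^{*}$ the scalar functional $x_{j}^{\ast\ast}\mapsto(y^{*}\circ A)^{\ast[m+1]}(x_{1}^{\ast\ast},\ldots,x_{j}^{\ast\ast},\ldots,x_{m}^{\ast\ast})=B_{j}(x_{j}^{\ast\ast})(y^{*})$ is order continuous by the previous paragraph, so evaluating along the downward net yields $B_{j}(z_{\alpha_{j}}^{\ast\ast})(y^{*})\downarrow 0$. Therefore $0\leq w^{\ast\ast}(y^{*})\leq\inf_{\alpha_{j}}B_{j}(z_{\alpha_{j}}^{\ast\ast})(y^{*})=0$, and since every element of $F^{*}$ is a difference of positive ones it follows that $w^{\ast\ast}=0$, as required.

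The step that I expect to demand the most care is precisely the last passage, where scalar-valued order continuity (one $y^{*}$ at a time) has to be lifted to order continuity of the $F^{\ast\ast}$-valued operator $B_{j}$; everything else is either a direct appeal to Theorem \ref{pro2}, a manipulation of the Davie--Gamelin/iterated-$\overline{\,\cdot\,}^{\theta}$ formula, or the standard positivity reduction. Dedekind completeness of $F^{\ast\ast}$, together with the fact that a positive element of $F^{\ast\ast}$ that annihilates $(F^{*})^{+}$ is zero, is what allows this last step to be carried out without any further assumption on $F$.
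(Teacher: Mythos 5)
Your proposal is correct and follows essentially the same route as the paper's proof: reduce to the scalar-valued case via the identity $A^{\ast[m+1]}(x_{1}^{\ast\ast},\ldots,x_{m}^{\ast\ast})(y^{\ast})=(y^{\ast}\circ A)^{\ast[m+1]}(x_{1}^{\ast\ast},\ldots,x_{m}^{\ast\ast})$ and Theorem \ref{pro2}, then lift order continuity from the scalar level to $F^{\ast\ast}$ by a positivity reduction together with the fact that a decreasing net of positive elements of $F^{\ast\ast}$ whose evaluations at every positive $y^{\ast}\in F^{\ast}$ decrease to zero must itself decrease to zero. The only cosmetic difference is that you make the positivity reduction explicit by splitting the fixed coordinates into positive and negative parts, whereas the paper keeps them general and uses the domination $|A^{\ast[m+1]}(x_{1}^{\ast\ast},\ldots)|\leq A^{\ast[m+1]}(|x_{1}^{\ast\ast}|,\ldots)$ for positive $A$; both are standard and equivalent.
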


\begin{proof} Let  $A\in \mathcal{L}_{r}(E_{1},\ldots,E_{m};F)$ and $y^{\ast}\in F^{\ast}$ be given. Since $y^{\ast}\circ A\in \mathcal{L}_{r}(E_{1},\ldots,E_{m})$ and order continuity is an Arens property, by Theorem \ref{pro2} the extension  $(y^* \circ A)^{\ast[m+1]}$ is separately order continuous. For all $x_{i}^{\ast\ast}\in E_{i}^{\ast\ast}, i=1,\ldots,m$,
$$A^{\ast[m+1]}(x_{1}^{\ast\ast},\ldots,x_{m}^{\ast\ast})(y^{\ast})=(y^{\ast}\circ A)^{*[m+1]}(x_{1}^{\ast\ast},\ldots,x_{m}^{\ast\ast}).$$
For each $j\in\{1,\ldots,m\}$ let $x_{j}^{\ast\ast}\in E_{j}^{\ast\ast}$ and let $(x_{\alpha_{j}}^{\ast\ast})_{\alpha_{j}\in \Omega_{j}}$ be a net $E_{j}^{\ast\ast}$ such that $x_{\alpha_{j}}^{\ast\ast} \xrightarrow{\,\, o \,\, } 0$. There exists a net $(y_{\alpha_{j}}^{\ast\ast})_{\alpha_{j}\in \Omega_{j}}$ in $E_{j}^{\ast\ast}$ and $\alpha_{j_{0}}\in \Omega_{j}$ so that $y_{\alpha_{j}}^{\ast\ast}\downarrow 0$ and $|x_{\alpha_{j}}^{\ast\ast}|\leq y_{\alpha_{j}}^{\ast\ast}$ for every $\alpha_{j}\geq \alpha_{j_{0}}$. Without loss of generality, assume that $A$  and $y^{\ast}$ are positive. Since $(y^{\ast}\circ A)^{*[m+1]}(|x_{1}^{\ast\ast}|,\ldots,\bullet,\ldots,|x_{m}^{\ast\ast}|)\colon E_{j}^{\ast\ast}\longrightarrow \mathbb{R}$ is positive and order continuous,
$$A^{*[m+1]}(|x_{1}^{\ast\ast}|,\ldots,y_{\alpha_{j}}^{\ast\ast},\ldots,|x_{m}^{\ast\ast}|)(y^{\ast})
=(y^{\ast}\circ A)^{*[m+1]}(|x_{1}^{\ast\ast}|,\ldots,\bullet,\ldots,|x_{m}^{\ast\ast}|)(y_{\alpha_{j}}^{\ast\ast})\downarrow 0.$$
It follows that  $A^{*[m+1]}(|x_{1}^{\ast\ast}|,\ldots,y_{\alpha_{j}}^{\ast\ast},\ldots,|x_{m}^{\ast\ast}|) \downarrow 0$ \cite[Theorem 1.18]{positiveoperators} and, for every $\alpha_{j}\geq \alpha_{j_{0}}$,
\begin{align*}
|A^{*[m+1]}(x_{1}^{\ast\ast},\ldots,x_{\alpha_{j}}^{\ast\ast},\ldots,x_{m}^{\ast\ast})|%&=|AB_{m}^{\theta}(A)(x_{1}^{\ast\ast},\ldots,x_{\alpha_{j}}^{\ast\ast},\ldots,x_{m}^{\ast\ast})|\\
&\leq A^{*[m+1]}(|x_{1}^{\ast\ast}|,\ldots,|x_{\alpha_{j}}^{\ast\ast}|,\ldots,|x_{m}^{\ast\ast}|)\\
&\leq A^{*[m+1]}(|x_{1}^{\ast\ast}|,\ldots,y_{\alpha_{j}}^{\ast\ast},\ldots,|x_{m}^{\ast\ast}|) \downarrow 0.
\end{align*}
This shows that $A^{*[m+1]}(x_{1}^{\ast\ast},\ldots,x_{\alpha_{j}}^{\ast\ast},\ldots,x_{m}^{\ast\ast})%\leq AB_{m}^{\theta}(A)(|x_{1}^{\ast\ast}|,\ldots,y_{\alpha_{j}}^{\ast\ast},\ldots,|x_{m}^{\ast\ast}|)\downarrow 0.
%\end{align*}
\xrightarrow{\,\, o \,\, } 0$ and proves that $A^{*[m+1]}$ is separately order continuous.
\end{proof}

\begin{example}\label{exxe}\rm As to condition (i) above, we have the following examples between nonreflexive Banach lattices:\\
(a) Every operator from $c_0$ to $c_0^*=\ell_1$ is compact, hence weakly compact (this is Pitt's Theorem). \\
(b) Every operator from $C(K)$, where $K$ is a compact Hausdorff space, to a KB-space is weakly compact. Just recall that KB-spaces do not contain a copy of $c_0$ \cite[Theorem 4.60]{positiveoperators} and apply \cite[Theorem 5]{pelczynski}.\\
(c) Since any AM-space with order unity is order isometric to a $C(K)$-space \cite[Theorem 4.29]{positiveoperators}, from (b) it follows that every operator from an AM-space with order unity to a KB-space is weakly compact. And since the dual of an AM-space is a KB-space, every operator from an AM-space with order unity to its dual is weakly compact. In particular, every operator from $\ell_\infty$ to $\ell_\infty^*$ is weakly compact.
\end{example}

\begin{corollary}
 Let $m\geq 2$ and  $E_{1}, \ldots,E_{m}, F$ be Banach lattices such that  every regular operator from $E_{j}$ to $E_{j+i}^{\ast}$ is weakly compact, $j=2,\ldots,m-1, i=1,\ldots,m-j$. If  $E_{1}^{\ast}$ has order continuous norm, then the Arens extension  $A^{\ast[m+1]}$ of any operator $A\in \mathcal{L}_{r}(E_{1},\ldots,E_{m};F)$ is separately order continuous on $E_{1}^{\ast\ast}\times\cdots\times  E_{m}^{\ast\ast}$.
\end{corollary}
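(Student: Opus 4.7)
The plan is to invoke Theorem \ref{ltheo}. Condition (i) of that theorem coincides verbatim with the weak compactness hypothesis assumed here, so nothing needs to be done for it. The entire task reduces to verifying condition (ii): for every $k\in\{2,\ldots,m\}$, every $x_{1}^{\ast\ast}\in E_{1}^{\ast\ast}$ and every $T\in\mathcal{L}_{r}(E_{1};E_{k}^{\ast})$, the functional $T^{\ast\ast}(x_{1}^{\ast\ast})\in E_{k}^{\ast\ast\ast}$ must be order continuous on $E_{k}^{\ast\ast}$.

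To verify this I would combine two classical ingredients. First, writing $T=T_{1}-T_{2}$ as a difference of positive operators in $\mathcal{L}_{r}(E_{1};E_{k}^{\ast})$, the adjoints $T_{1}^{\ast},T_{2}^{\ast}\colon E_{k}^{\ast\ast}\longrightarrow E_{1}^{\ast}$ are positive and order continuous by \cite[Theorem 1.73]{positiveoperators} (this is precisely the same citation used in the paper's counterexample section), hence $T^{\ast}=T_{1}^{\ast}-T_{2}^{\ast}$ is order continuous as well. Second, the assumption that $E_{1}^{\ast}$ has order continuous norm yields, via \cite[Theorem 2.4.2]{nieberg}, the identification $E_{1}^{\ast\ast}=(E_{1}^{\ast})_{n}^{\ast}$, so every $x_{1}^{\ast\ast}\in E_{1}^{\ast\ast}$ acts as an order continuous functional on $E_{1}^{\ast}$.

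Putting these together, for any net $(y_{\alpha}^{\ast\ast})_{\alpha}$ in $E_{k}^{\ast\ast}$ with $y_{\alpha}^{\ast\ast}\xrightarrow{\,\, o \,\,} 0$, the order continuity of $T^{\ast}$ gives $T^{\ast}(y_{\alpha}^{\ast\ast})\xrightarrow{\,\, o \,\,} 0$ in $E_{1}^{\ast}$, and then the order continuity of $x_{1}^{\ast\ast}$ on $E_{1}^{\ast}$ gives
$$T^{\ast\ast}(x_{1}^{\ast\ast})(y_{\alpha}^{\ast\ast})=x_{1}^{\ast\ast}(T^{\ast}(y_{\alpha}^{\ast\ast}))\longrightarrow 0,$$
which is exactly the order continuity of $T^{\ast\ast}(x_{1}^{\ast\ast})$ on $E_{k}^{\ast\ast}$. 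Condition (ii) of Theorem \ref{ltheo} is thereby established, and the separate order continuity of $A^{\ast[m+1]}$ on the product $E_{1}^{\ast\ast}\times\cdots\times E_{m}^{\ast\ast}$ follows at once. No genuine obstacle appears: the corollary is a packaging of Theorem \ref{ltheo} in which the order-continuous-norm hypothesis on $E_{1}^{\ast}$ is tailor-made to render condition (ii) automatic through the classical order continuity of regular adjoints.
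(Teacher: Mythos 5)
Your proposal is correct and follows essentially the same route as the paper: condition (i) of Theorem \ref{ltheo} is the hypothesis verbatim, and condition (ii) is obtained by combining the order continuity of $x_{1}^{\ast\ast}$ on $E_{1}^{\ast}$ (from \cite[Theorem 2.4.2]{nieberg}) with the order continuity of $T^{\ast}$ (from \cite[Theorem 1.73]{positiveoperators}) to conclude that $T^{\ast\ast}(x_{1}^{\ast\ast})=x_{1}^{\ast\ast}\circ T^{\ast}$ is order continuous. The only difference is that you spell out the decomposition $T=T_{1}-T_{2}$ explicitly, which the paper leaves implicit.
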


\begin{proof} Condition (i) of Theorem \ref{ltheo} is given by assumption. For $x_{1}^{\ast\ast}\in E_{1}^{\ast\ast}$ and   $T\in\mathcal{L}_{r}(E_{1};E_{k}^{\ast})$, $x_{1}^{\ast\ast}$ is order continuous because the norm of $E_{1}^{\ast}$ is order continuous \cite[Theorem 2.4.2]{nieberg}. Since $T^*$ is order continuous \cite[Theorem 1.73]{positiveoperators},  $T^{**}(x_1^{**}) =  x_1^{**} \circ T^* $ is order continuous as well, so condition (ii) is fulfilled too.
\end{proof}

Recall that a Banach space $E$ is {\it Arens regular} if every bounded linear operator from $E$ to $E^*$ is weakly compact (see, e.g., \cite{livrosean}). The Banach lattices $c_0, \ell_\infty$ and $C(K)$, where $K$ is a compact Hausdorff space, in particular AM-spaces with order unit, are Arens regular (cf. Example \ref{exxe}).

\begin{corollary} Let $E$ be an Arens regular Banach lattice. Then, for every Banach lattice $F$, the Arens extension $A^{\ast[m+1]}$ of any regular $m$-linear operator $A \colon E^m \longrightarrow F$ is separately order continuous on $(E^{**})^m$.
\end{corollary}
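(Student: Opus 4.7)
My strategy is to apply Theorem \ref{ltheo} with $E_1 = \cdots = E_m = E$, so everything reduces to verifying hypotheses (i) and (ii) of that theorem.

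Condition (i) is immediate: any regular linear operator between Banach lattices is bounded, so by Arens regularity of $E$ every regular operator from $E$ to $E^* = E_{j+i}^*$ is weakly compact. The only subtlety is condition (ii): given $T \in \mathcal{L}_r(E;E^*)$ and $x^{\ast\ast}\in E^{\ast\ast}$, I need that $T^{\ast\ast}(x^{\ast\ast}) \in E^{\ast\ast\ast}$ is order continuous on $E^{\ast\ast}$. Here is where I would use Gantmacher's theorem together with a Riesz--Kantorovich argument.

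First, since $T$ is weakly compact (by Arens regularity), Gantmacher's theorem yields $T^{\ast\ast}(E^{\ast\ast}) \subseteq J_{E^*}(E^*)$, so $T^{\ast\ast}(x^{\ast\ast}) = J_{E^*}(\varphi)$ for some $\varphi \in E^*$. It then suffices to show the general fact that \emph{for any Banach lattice $E$, every element of $J_{E^*}(E^*)$ is order continuous on $E^{**}$}, i.e.\ $J_{E^*}(E^*) \subseteq (E^{**})_n^{*}$. Writing $\varphi = \varphi^+ - \varphi^-$, it is enough to handle positive $\varphi$. Given a net $x_\alpha^{\ast\ast}\downarrow 0$ in $E^{\ast\ast}$, recall that $E^{\ast\ast}$ is precisely $\mathcal{L}_b(E^*, \mathbb{R})$ (the order-bounded linear functionals on $E^*$), so the Riesz--Kantorovich theorem \cite[Theorem 1.18]{positiveoperators} applies and gives $x_\alpha^{\ast\ast}(\varphi) = J_{E^*}(\varphi)(x_\alpha^{\ast\ast}) \downarrow 0$. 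This establishes the order continuity of $J_{E^*}(\varphi)$, hence of $T^{\ast\ast}(x^{\ast\ast})$, so (ii) holds.

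With both hypotheses verified, Theorem \ref{ltheo} directly yields the separate order continuity of the Arens extension $A^{\ast[m+1]}$ on $(E^{\ast\ast})^m$ for any $A \in \mathcal{L}_r(E^m; F)$. The only non-routine ingredient is the lemma that $J_{E^*}(E^*) \subseteq (E^{\ast\ast})_n^{*}$; I expect this to be the main point of the argument, as it is what allows Arens regularity (which is a Banach-space property) to couple with the order-theoretic conclusion demanded by Theorem \ref{ltheo}(ii).
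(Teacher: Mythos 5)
Your proposal is correct and follows essentially the same route as the paper: both verify the hypotheses of Theorem \ref{ltheo}, obtaining (i) directly from Arens regularity and (ii) from the chain $T^{**}(E^{**})\subseteq J_{E^*}(E^*)\subseteq (E^{**})_n^*$ via Gantmacher's theorem. The paper states the inclusion $J_{E^*}(E^*)\subseteq (E^{**})_n^*$ without proof, whereas you supply the (correct) Riesz--Kantorovich justification.
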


\begin{proof} The Arens regularity of $E$ gives condition (i) of Theorem \ref{ltheo} right away and implies that, for every $T\in\mathcal{L}_{r}(E;E^{\ast})$, $T^{**}(E^{**}) \subseteq J_{E^*}(E^*) \subseteq (E^{**})^*_n$, which gives condition (ii).
\end{proof}

We finish the paper with one more result on order continuity of Arens extensions of homogeneous polynomials. For a polynomial $P \in {\cal P}_r(^mE;F)$, we write $P^{*[m+1]} := AR_m^{\theta}(P)$. Recall that $P$ is {\it orthogonally additive} if $P(x+y) = P(x) + P(y)$ whenever $x$ and $y$ are disjoint. The literature on orthogonally additive polynomials is vast.

A linear operator $u\colon E \longrightarrow E^*$ is {\it symmetric} if $u(x)(y) = u(y)(x)$ for all $x,y \in E$. A Banach space $E$ is {\it symetrically Arens regular} if every symmetric operator from $E$ to $E^*$ is weakly compact. Of course, Arens regular spaces are symmetrically Arens regular, but there are symmetrically Arens regular spaces that fail to be Arens regular \cite{leung}.

\begin{proposition} Let $E,F$ be Banach lattices and $P \in {\cal P}_r(^mE;F)$. If either $P$ is orthogonally additive and $F = \mathbb{R}$ or $E$ is symmetrically Arens regular, then the Arens extension $P^{\ast[m+1]} \colon E^{**} \longrightarrow F^{**}$ of $P$ is order continuous on $E^{**}$.
\end{proposition}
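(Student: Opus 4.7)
The plan is to treat the two hypotheses separately; both build on Proposition \ref{respol} (order continuity of Arens extensions of regular polynomials at the origin).

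\textbf{Symmetrically Arens regular $E$.} Here I would reduce order continuity of $P^{*[m+1]}$ to separate order continuity of the associated symmetric $m$-linear extension $\check{P}^{*[m+1]}\colon(E^{**})^m\to F^{**}$, and then obtain the latter by running the proof of Theorem \ref{ltheo} with all $E_i=E$. The reduction works because $F^{**}$ is Dedekind complete, so by \cite[Theorem 2]{ryan1} separate and joint order continuity of $\check{P}^{*[m+1]}$ coincide, and a jointly order continuous multilinear extension has an order continuous diagonal (which is $P^{*[m+1]}$), since $x_\alpha^{**}\xrightarrow{\,\, o \,\,} x^{**}$ implies $(x_\alpha^{**},\ldots,x_\alpha^{**})\xrightarrow{\,\, o \,\,}(x^{**},\ldots,x^{**})$ in the product. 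Hypotheses (i) and (ii) of Theorem \ref{ltheo} ordinarily require weak compactness of \emph{all} regular operators $E\to E^{*}$, but only the operators actually arising in the proof need be weakly compact. Because $\check{P}$ is symmetric, each such operator is the one associated with a symmetric bilinear form on $E$, hence is a symmetric operator; for operators appearing in the induction with some arguments already in $E^{**}$ I would use the (known) fact that Arens extensions of symmetric multilinear forms on symmetrically Arens regular spaces are symmetric, preserving symmetry of the associated operators. Condition (i) then follows from symmetric Arens regularity, and (ii) from the containment $T^{**}(E^{**})\subseteq J_{E^{*}}(E^{*})\subseteq(E^{**})^{*}_{n}$ already used in the previous corollary.

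\textbf{Orthogonally additive $P$ with $F=\mathbb{R}$.} Here the plan is to exploit the linearization of orthogonally additive scalar polynomials: $P$ factors as $P=\varphi\circ\Pi_{m}$, with $\Pi_{m}\colon E\to E^{\langle m\rangle}$ the canonical orthogonally additive power map into the $m$-concavification of $E$ and $\varphi\in(E^{\langle m\rangle})^{*}$. Passing to Arens extensions and identifying the relevant biduals carefully, one gets $P^{*[m+1]}=\varphi^{**}\circ\Pi_{m}^{*[m+1]}$, so order continuity of $P^{*[m+1]}$ reduces to order continuity of each factor. The outer factor $\varphi^{**}$ lies in $J_{(E^{\langle m\rangle})^{*}}((E^{\langle m\rangle})^{*})\subseteq((E^{\langle m\rangle})^{**})^{*}_{n}$, hence is order continuous. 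For the inner factor I would promote the order continuity at the origin provided by Proposition \ref{respol} to order continuity at arbitrary points: an order-convergent net in $E^{**}$ can be lattice-decomposed so that orthogonal additivity reduces the problem to the case of nets converging to $0$, already handled.

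\textbf{Main obstacle.} In the symmetric case the delicate step is to verify carefully that every operator produced inside the induction of Theorem \ref{ltheo}—particularly the mixed ones, in which some arguments have been replaced by elements of $E^{**}$—is still a symmetric operator $E\to E^{*}$; this is exactly the point where one needs that symmetric Arens regularity of $E$ forces the Arens extension of a symmetric multilinear form to be symmetric. In the orthogonally additive case the subtle step is the precise identification $P^{*[m+1]}=\varphi^{**}\circ\Pi_{m}^{*[m+1]}$, since the bidual of $E^{\langle m\rangle}$ and the $m$-concavification of $E^{**}$ do not literally coincide, and one must verify that $\Pi_{m}^{*[m+1]}$ lands in the part of $(E^{\langle m\rangle})^{**}$ on which $\varphi^{**}$ is order continuous and compatible with the factorization.
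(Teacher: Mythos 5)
Your proposal diverges from the paper's proof in both branches, and in both branches the step you yourself flag as the ``main obstacle'' is precisely the step that is missing, so the argument is not complete as written. In the orthogonally additive case, the entire content of the result is the promotion of order continuity at the origin (Proposition \ref{respol}) to order continuity at every point; the paper obtains this in one line by applying \cite[Proposition 8]{nakano} directly to $P^{\ast[m+1]}$. Your detour through the $m$-concavification does not address this difficulty: even granting the factorization $P^{\ast[m+1]}=\varphi^{\ast\ast}\circ\Pi_m^{\ast[m+1]}$ (which, as you note, requires a nontrivial identification of $(E^{\langle m\rangle})^{\ast\ast}$ with the relevant concavification of $E^{\ast\ast}$ that you do not carry out), you are still left with exactly the same problem for the orthogonally additive polynomial $\Pi_m^{\ast[m+1]}$, namely passing from order continuity at $0$ to order continuity everywhere. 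The suggestion to ``lattice-decompose'' an order-convergent net so that orthogonal additivity applies is not a proof: for $x_\alpha^{\ast\ast}\stackrel{o}{\longrightarrow}x^{\ast\ast}$ the increments $x_\alpha^{\ast\ast}-x^{\ast\ast}$ are in general not disjoint from $x^{\ast\ast}$, and making this work is the substance of the cited result, not a routine reduction. Moreover, if you could prove that promotion for $\Pi_m$, the same argument applied to $P^{\ast[m+1]}$ itself would make the factorization superfluous.

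In the symmetrically Arens regular case your route is workable in principle but both incomplete and unnecessarily heavy. Re-running the induction of Theorem \ref{ltheo} with hypotheses (i) and (ii) weakened to ``the operators actually arising are weakly compact'' requires verifying that \emph{every} intermediate operator --- including those such as $A_{x_1,\ldots,x_{n-2},x_n^{\ast\ast}}$ whose defining bilinear form already involves the Arens extension evaluated at bidual elements --- is symmetric; you gesture at this via the symmetry of $AR_m^\theta$ of symmetric forms but do not track it through the induction, and the induction hypothesis itself must be reformulated for symmetric forms (the form $\overline{x_{n+1}^{\ast\ast}}^{\theta}(A)$ to which it is applied must be shown symmetric, and the weakened hypotheses re-checked at that level). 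The paper avoids all of this: once one knows from \cite[Theorem 8.3]{acg} or \cite[Corollary 6]{csgv} that $AR_m^\theta(\varphi\circ\check{P})$ is symmetric, the order continuity of $(\check{P})^{\ast[m+1]}$ in the \emph{first} variable --- which holds with no hypotheses at all by Theorem \ref{quasesoc}(c) --- transfers to every variable by permuting the arguments, giving separate order continuity on $(E^{\ast\ast})^m$ directly; positivity plus \cite[Lemma 2.6]{Buskes} then yields joint order continuity and hence order continuity of the diagonal. You already have every ingredient of this short argument in hand (the symmetry of the extension and the reduction from joint order continuity to the diagonal), so the appeal to Theorem \ref{ltheo} can simply be deleted once the gap above is recognized.
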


\begin{proof} Assume first that $P$ is orthogonally additive and $F = \mathbb{R}$. By Proposition \ref{respol} we know that $P^{\ast[m+1]}$ is order continuous at the origin on $E^{**}$,  therefore it is order continuous at every point of $E^{**}$ by \cite[Proposition 8]{nakano}.

Suppose now that $E$ is symmetrically Arens regular. It is plain that we can assume that $P$ is positive. We know that $(\check{P})^{\ast[m+1]}$ is order continuous in the first variable on $E^{**}$ (Theorem \ref{quasesoc}) and positive because $\check{P}$ is positive. In order to check that it is symmetric, let $\rho \in S_m$ be given.  %Let $j \in \{1, \ldots, m\}$ and $ z_1'', \ldots, z_{m-1}'' \in E^{**}$ be given. %The symbol $(\check{P})^{\ast[m+1]}(z_1'', \ldots, \underbrace{a}_{j}, \ldots, z_{m-1}'' ) $ means that $a$ is placed at the $j$-th variable. For $x_{\alpha}^{\prime\prime}\xrightarrow{\,\, o \,\,} x''$ in $E^{\sim\sim}$, the symmetry and order continuity in the first variable of $(\check{P})^{\ast[m+1]}$ give
%proving that $(\check{P})^{\ast[m+1]}$ is order continuous in the $j$-th variable on $E^{**}$.This establishes
For every $\varphi \in F^*$, since $E$ is symmetrically Arens regular and $\varphi \circ \check{P}$ is symmetric, by \cite[Theorem 8.3]{acg} (or \cite[Corollary 6]{csgv}) we know that $ AR_m^{\theta}(\varphi \circ \check{P})$ is symmetric as well. So, for $x_1^{**}, \ldots, x_m^{**} \in E^{**}$,
\begin{align*} (\check{P})^{\ast[m+1]}(x_1^{**}, \ldots, x_m^{**} )&(\varphi)  = AR_m^{\theta}(\check{P})(x_1^{**}, \ldots, x_m^{**})(\varphi)  = AR_m^{\theta}(\varphi \circ \check{P})(x_1^{**}, \ldots, x_m^{**})\\
& = AR_m^{\theta}(\varphi \circ \check{P})(x_{\rho(1)}^{**}, \ldots, x_{\rho(m)}^{**})\\
& =  AR_m^{\theta}(\check{P})(x_{\rho(1)}^{**}, \ldots, x_{\rho(m)}^{**})(\varphi) = (\check{P})^{\ast[m+1]}(x_{\rho(1)}^{**}, \ldots, x_{\rho(m)}^{**} )(\varphi),
\end{align*}
proving that $(\check{P})^{\ast[m+1]}$ is symmetric. The order continuity in the first variable and the symmetry yield that the positive $m$-linear operator $(\check{P})^{\ast[m+1]}$ is separately order continuous on $(E^{**})^m$. By \cite[Lemma 2.6]{Buskes}  it follows that $(\check{P})^{\ast[m+1]}$ is jointly order continuous on $(E^{**})^m$. So, if $x_{\alpha}^{**}\xrightarrow{\,\, o \,\,} x^{**}$ in $E^{**}$, then
\begin{align*}P^{\ast[m+1]}(x_{\alpha}^{**}) & = (\check{P})^{\ast[m+1]}(x_{\alpha}^{**}, \ldots, x_{\alpha}^{**}) \xrightarrow{\,\, o \,\,} (\check{P})^{\ast[m+1]}(x^{**}, \ldots, x^{**}) = P^{\ast[m+1]}(x^{**}).
\end{align*}
\end{proof}

\medskip

\noindent {\bf Acknowledgement.} The authors are grateful to R. Ryan for pointing out a gap in the first version of the paper and for drawing our attention to \cite{nakano}.

\bigskip

\noindent Faculdade de Matem\'atica~~~~~~~~~~~~~~~~~~~~~~Instituto de Matem\'atica e Estat\'istica\\
Universidade Federal de Uberl\^andia~~~~~~~~ Universidade de S\~ao Paulo\\
38.400-902 -- Uberl\^andia -- Brazil~~~~~~~~~~~~ 05.508-090 -- S\~ao Paulo -- Brazil\\
e-mail: botelho@ufu.br ~~~~~~~~~~~~~~~~~~~~~~~~~e-mail: luisgarcia@ime.usp.br

%\bigskip
%\bigskip
%
%$F$ preserva multimorfismos de Riesz se as extensões de Aron-Berner de $F$-valued MR são MR
%
%\bigskip
%
%Teorema. Se $F_n$ preserva MR para todo $n$ então $(\oplus_n F_n)_p$ e $(\oplus_n F_n)_0$ preservam MR

\end{document}